\documentclass{amsart} 
\usepackage{amscd,amssymb,amsmath,amsbsy,amsthm,amssymb,mathtools}
\usepackage{color,subfig}
\usepackage[colorlinks,plainpages,backref,urlcolor=blue]{hyperref}
\usepackage[width=5.65in,height=8.0in,centering]{geometry}
\usepackage[mathscr]{euscript}

\usepackage{tikz}
\usetikzlibrary{calc,intersections,arrows,patterns,cd,shapes.misc}
\usepackage{booktabs}
\usepackage{enumerate}
\usepackage{enumitem}

\usepackage{cleveref} % <<< SHOULD BE LAST

\makeatletter
\newcommand{\mylabel}[2]{(#2)\def\@currentlabel{#2}\label{#1}}
\makeatother

\theoremstyle{definition}
\newtheorem{theorem}{Theorem}[subsection]
\newtheorem{definition}[theorem]{Definition}
\newtheorem{lemma}[theorem]{Lemma}
\newtheorem{corollary}[theorem]{Corollary}

\newtheorem{proposition}[theorem]{Proposition}
\newtheorem{propdef}[theorem]{Proposition/Definition}

\newtheorem{exm}[theorem]{Example}
\newtheorem{rem}[theorem]{Remark}

\newtheorem{question}[theorem]{Question}

\newenvironment{example}%
{\pushQED{\qed}\begin{exm}}%
{\popQED\end{exm}}  % end example environment with a qed-symbol

\newenvironment{remark}%
{\pushQED{\qed}\begin{rem}}%
{\popQED\end{rem}}  % end remark environment with a qed-symbol

     % absolute value delimiters
\newcommand{\set}[1]{\left\{#1\right\}}   % parentheses around a set
\newcommand{\angl}[1]{\left<#1\right>}    % inner product

\newcommand{\A}{\mathscr{A}}
\newcommand{\X}{\mathscr{X}}   % critical set variety
\newcommand{\Z}{\mathbb{Z}}
\newcommand{\NN}{\mathbb{N}}
\renewcommand{\AA}{\mathbb{A}}   % affine space
\renewcommand{\P}{\mathbb{P}}

\newcommand{\C}{\mathbb{C}}
\newcommand{\R}{\mathbb{R}}
\newcommand{\T}{\mathbb{T}}
\renewcommand{\k}{\Bbbk}
\newcommand{\M}{\mathsf{M}}    % a matroid
    % the uniform matroid
   % a geometric lattice
\newcommand{\typeA}{\mathsf{A}}  % type A root system
   % a sheaf -- experimenting with fonts
   % a cokernel sheaf
   % a kernel sheaf
   % another sheaf
   % yet another sheaf
   % yet another sheaf 
   % yet another sheaf
   % yet another sheaf
  % extreme elements in a poset

\newcommand{\cyc}{\mathcal{Z}} % cyclic flats

\DeclareMathOperator{\der}{{Der}}
\DeclareMathOperator{\derbar}{\overline{Der}}
\DeclareMathOperator{\Tor}{{Tor}}
\DeclareMathOperator{\spec}{{Spec}}
\DeclareMathOperator{\pdim}{{pdim}}
\DeclareMathOperator{\codim}{{codim}}  
\DeclareMathOperator{\cl}{{cl}}  % matroid closure
\DeclareMathOperator{\rank}{{rk}}  % matroid rank
\DeclareMathOperator{\Sym}{Sym}
\DeclareMathOperator{\Ext}{Ext}

%=== Letters ===
\newcommand{\uppercaseFactory}[2]{
    \foreach \x in {A,...,Z}{
        \expandafter\xdef\csname #1\x\endcsname{#2{\x}}}}
\newcommand{\lowercaseFactory}[2]{
    \foreach \x in {a,...,z}{
        \expandafter\xdef\csname #1\x\endcsname{#2{\x}}}}

\uppercaseFactory{sh}{\noexpand\mathcal}
\uppercaseFactory{cal}{\noexpand\mathcal}
\uppercaseFactory{scr}{\noexpand\mathscr}
\uppercaseFactory{mod}{}
\uppercaseFactory{fr}{\noexpand\mathfrak}
\lowercaseFactory{fr}{\noexpand\mathfrak}
\uppercaseFactory{rm}{\noexpand\mathrm}
\lowercaseFactory{rm}{\noexpand\mathrm}

\keywords{hyperplane arrangement}  % do this later
\subjclass[2010]{Primary 
05B35; % matroids and geometric lattices
Secondary
52C35. % arrangements
}

\begin{document}

\begin{abstract}
    We study the Hadamard product of the linear forms defining a
    hyperplane arrangement with those of its dual, which we view as
    generating an ideal in a certain polynomial ring. We use this
    ideal, which we call the ideal of pairs, to study logarithmic
    derivations and critical set varieties of arrangements in a way
    which is symmetric with respect to matroid duality.  Our main
    result exhibits the variety of the ideal of pairs as a subspace
    arrangement whose components correspond to cyclic flats of the
    arrangement. As a corollary, we are able to give geometric
    explanations of some freeness and projective dimension results due
    to Ziegler and Kung--Schenck.
\end{abstract}

\title[Geometry of logarithmic derivations of hyperplane arrangements]%
      {Geometry of logarithmic derivations of hyperplane arrangements}

\author[G. Denham]{Graham Denham$^1$} 
\address{Department of Mathematics, University of Western Ontario,
London, ON  N6A 5B7, Canada}  %%  not UK!!
\email{\href{mailto:gdenham@uwo.ca}{gdenham@uwo.ca}}
\urladdr{\href{http://gdenham.math.uwo.ca/}%
{http://www.math.uwo.ca/\~{}gdenham}}
\thanks{$^1$Partially supported by an NSERC Discovery Grant (Canada)}

\author[A.\ Steiner]{Avi Steiner}
\address{Department of Mathematics, University of Western Ontario,
London, ON  N6A 5B7, Canada}  
\email{\href{mailto:asteine@uwo.ca}{asteine@uwo.ca}}
\urladdr{\href{https://sites.google.com/view/avi-steiner/}%
{https://sites.google.com/view/avi-steiner}}

\setcounter{tocdepth}{2}
\maketitle
\tableofcontents

\section{Introduction}
One of the themes in the study of hyperplane arrangements is the
relationship between their geometric properties and the combinatorics
of their underlying matroids.  In this article, we revisit the
logarithmic differentials on a hyperplane arrangement
and their relationship with
the critical set or maximum likelihood variety of the arrangement.

We consider these objects from a unifying point of view that brings to
light some new algebra and geometry of matroid realizations.  To be more
precise,
let $\M$ be a matroid on a ground set $E$, and $V=\k^E$ the $\k$-vector space
on $E$.  A linear realization of $\M$ over a field $\k$ is a linear
subspace $W\subseteq V$ for which the rank of a subset $S\subseteq E$ in $\M$
equals the rank of the projection of $W$ onto the coordinate subspace $\k^S$.
Let $W^\perp$ denote the $\k$-dual of $V/W$, a subspace of $V^*$.  Then
$W^\perp$ is a linear realization of the dual matroid, $\M^\perp$.
The graph of the evaluation pairing is, by definition, the map
$V^*\times V\to V\times V$ given by $(\varphi,v)\mapsto(v,\varphi(v))$.
Let $\X(W)$ denote the image of its restriction to $W\times W^\perp$.  

In slightly different language, it was noted in \cite[\S5.3]{D14}, \cite{HS14}
that $\X(W)$ is the affine critical set variety associated with $W$.
This is a construction that arose both in mathematical physics applications
of hyperplane arrangements \cite{Var95,OT95}, as well as in maximal
likelihood estimation in algebraic statistics \cite{CHKS06}.  We recall that
if $\M$ has no loops, a linear realization $W$ of $\M$ determines a hyperplane
arrangement $\A=\set{H_1,\ldots,H_n}$ in $W$, where $H_i$ is the intersection
of the $i$th coordinate hyperplane of $V$ with $W$.  Here, $\M$ is called
the matroid of the arrangement $\A$, and all hyperplane arrangements arise in
this way.  Maximum likelihood
estimation motivates the problem of determining the
zero locus of the logarithmic form
$\omega_\lambda:=d\log\big(\prod_{i=1}^n f_i^{\lambda_i}\big)$,
where $\lambda\in\Z^n$ is a lattice vector and each $f_i$ is a linear
function on a finite-dimensional vector space $W$.
For any $f_i$'s
and generic $\lambda$, Orlik and Terao~\cite{OT95} proved a conjecture of
Varchenko~\cite{Var95}, 
that the (projective) zero locus consists of $\beta(\M)$-many isolated
points, where $\beta(\M)$ is Crapo's $\beta$ invariant.

A connection between $\X(W)$ and logarithmic forms appeared first in
\cite{CDFV11}.  There, it is shown that $\X(W)$ is cut out set-theoretically by an ideal
$I_{\log}$ generated by applying
logarithmic differentials to a certain $1$-form.  The ideal is a complete
intersection if and only if $\A$ is a free arrangement: that is, the
module of logarithmic derivations $\der(\A)$ is free.  It is shown in
\cite{CDFV11} that the ideal $I_{\log}$ is arithmetically
Cohen--Macaulay under the weaker homological hypothesis that $\A$ is a tame
arrangement.  In general, $I_{\log}$ need not be radical.  The relationship
between the matroid $\M$, the variety $\X(W)$ and homological
properties of $\der(\A)$ is somewhat delicate, and a complete understanding
would also settle Terao's long-standing Freeness Conjecture.

Another motivation comes from recent work of Ardila, Denham and Huh~\cite{ADH20}
in which a key ingredient, the conormal fan of a matroid, is a tropical
analogue of the variety $\X(W)$.  A main result there \cite[Thm.\ 1.2]{ADH20}
is a substantial generalization of Orlik and Terao's proof of Varchenko's
conjecture, mentioned above.  With this in mind, we feel that further
investigation could also lead to additional combinatorial applications down
the road.

\subsection{Organization and main results}
In \S\ref{sec:prelim}, we review notation for arrangements, matroids and
critical sets.  For
each matroid realization $W$, we consider the ideal $\fra$ generated by
the pairwise products of linear forms $(f_1,\ldots,f_n)$ defining $W$ and
linear forms $(g_1,\ldots,g_n)$ defining $W^\perp$.  We call this the
\emph{ideal of pairs}.  We observe in \S\ref{sec:fra} that it is
closely related to the defining ideal $I_{\X}$ of the critical set variety
$\X(W)$, though it has the advantage of being symmetric under matroid
duality.  In \S\ref{subsec:log}, we recall the notion of logarithmic forms
on hyperplane arrangements, and we relate
graded submodules of the ideals $I_{\log}\subseteq I_{\X}$
with logarithmic derivations (Proposition~\ref{prop:a-deg1}), strengthening
a result from \cite{CDFV11}.
%In \S\ref{subsec:ass}, we prove a technical lemma about associated primes and and bigraded modules which we need later.

Our main results appear in Section \S\ref{sec:fra}.  In \S\ref{subsec:syz},
we show that logarithmic derivations for $\A$ and the dual arrangement
$\A^\perp$ appear as certain syzygies of the ideal $\fra$.  The affine
variety $V(\fra)$ is a subspace arrangement (by a result of Derksen and
Sidman~\cite[Prop.\ 4.5]{DS04}).  In \S\ref{ss:minprimes}, we give a
combinatorial description of those subspaces: it turns out that they are
products of linear spaces from $W$ and $W^\perp$, respectively,
corresponding to a cyclic flat from $\A$ and the complementary cyclic
flat from $\A^\perp$ (Theorem~\ref{thm:min_primes}).

In general, though, the ideal $\fra$ is not reduced, and while we understand
the minimal primes of $\fra$, the embedded primes remain somewhat mysterious.
We give some examples, and in \S\ref{subsec:uniform}, we characterize what happens for uniform matroids.  In \S\ref{subsec:noniso}, we use the geometry
of $V(\fra)$ to give an easy condition for an isomorphism type of the module
of logarithmic derivations to change for different realizations of the same
matroid.

As another application, in \S\ref{subsec:free} we recall necessary
combinatorial conditions for $\der(\A)$ to be free due to 
Kung and Schenck~\cite{KS06} and Ziegler~\cite{Zie89}, and we show that
they follow in a straightforward way from our description of $V(\fra)$.
Finally, we return to the question of when the inclusion $I_{\log}\subseteq
I_{\X}$ is an equality, and we show (Proposition~\ref{fra-lt}) that this is
the case when $\fra$ is an ideal of linear type.

\subsection{Acknowledgements}
The authors would like to thank Takuro Abe, Joseph Bonin and Uli Walther
for some helpful explanations.

\section{Preliminaries}\label{sec:prelim}
Let $\M$ be a matroid of rank $r$ on the ground set $[n]\coloneqq \{1,2,\ldots,n\}$, and let
$f\colon W\hookrightarrow \k^{[n]}$ be the inclusion of an $r$-dimensional
linear subspace over a field $\k$.
By definition, $W$ is a \emph{($\k$-)linear realization} of $\M$
provided the composite with each coordinate projection,
\[
W\to\k^{[n]}\twoheadrightarrow\k^B,
\]
is an isomorphism if and only if $B\subseteq[n]$ is a basis of $\M$.

Using the standard coordinate basis to identify $\k^{[n]}$ with its dual,
we regard
\begin{equation*}
    W^\perp \coloneqq (\k^{[n]}/W)^*
\end{equation*}
as a subspace of $\k^{[n]}$ as well,
and let $g$ denote this linear embedding.  
$W^\perp$ is a linear realization of the dual matroid $\M^\perp$.  We refer
to \cite{Oxbook} for terminology and background on matroid theory.

The remainder of this section proceeds as follows: In \S\ref{subsec:rings}, we fix some additional notation that will be used throughout this article. In \S\ref{subsec:arrs}, we recall the definition of the hyperplane arrangement $\A$ associated to the realization $W$. In \Cref{prop:coordsofX} of \S\ref{subsec:csv}, we show how to represent the critical set variety of an arrangement as the spectrum of a subalgebra of $R\otimes_\k R^\perp$. This representation will be used in \S\ref{sec:fra} as motivation to define the main tool of this paper: the ideal of pairs $\fra$. In \S\ref{subsec:log}, we relate the module of derivations $\der(\A)$ with certain ideals $I_{\log}$ and $I_{\X}$---this relationship will be reformulated in terms of the ideal of pairs at the start of \S\ref{subsec:syz}. We end this section with \S\ref{subsec:ass}, which contains some technical commutative algebra results which will be needed later on.

\subsection{Notation for some rings}\label{subsec:rings}
We make the following definitions:
\begin{itemize}
    \item $x_1,\ldots,x_n$ -- the coordinate functions on $\k^{[n]}$
    \item $y_1,\ldots,y_n$ -- the dual coordinate functions on $(\k^{[n]})^*$
    \item $S\coloneqq \k[x_1,\ldots,x_n]$ -- the coordinate ring of $\k^{[n]}$
    \item $\T^n$ -- the torus in $\k^{[n]}$ given by the equation $x_1\cdots x_n\neq 0$
    \item $f_i$ -- the restriction of $x_i$ to $W$ for each $1\leq i\leq n$
    \item $g_i$ -- the restriction of $y_i$ to $W^\perp$ for each $1\leq i\leq n$
    \item $R\coloneqq \k[W]=\k[f_1,\ldots,f_n]$ and $R^\perp\coloneqq \k[W^\perp]=\k[g_1,\ldots,g_n]$
\end{itemize}

Since $\k^{[n]}\cong W\oplus W^\perp$, we have $S=R\otimes_{\k}R^\perp$.
We will also make use of a ring of parameters
\[A:=\k[a_1,\ldots,a_n],\]
writing $R[a]$ and $S[a]$ interchangably for $R\otimes_\k A$ and $S\otimes_\k A$, respectively.  The standard gradings on $R$, $R^\perp$, and $A$ give multigradings: in order to distinguish the bigrading on $S$ from that of $R[a]$, we will write the $A$-degree last and separate it with a semicolon.

% Let $x_1,\ldots,x_n$ denote coordinate functions on $\k^{[n]}$, and let
% $S=\k[x_1,\ldots,x_n]$.  Let $\T^n$ denote the torus in $\k^{[n]}$ given
% by the equation $x_1\cdots x_n\neq 0$.
% Let $f_i$ and $g_i$, respectively, denote the
% restrictions of $x_i$ to $W$ and $W^\perp$, for each $1\leq i\leq n$.  We
% let $R$ and $R^\perp$, respectively, denote the algebras $\k[W]=\k[f_1,\ldots,
%   f_n]$ and $\k[W^\perp]=\k[g_1,\ldots,g_n]$.
% Since
% $\k^{[n]}\cong W\oplus W^\perp$, we have $S=R\otimes_{\k}R^\perp$.
% We will also make use of a ring of parameters $A:=\k[a_1,\ldots,a_n]$,
% writing $R[a]$ and $S[a]$ interchangably for $R\otimes_\k A$
% and $S\otimes_\k A$, respectively.  The standard gradings on $R$,
% $R^\perp$ and $A$ give multigradings: in order to distinguish the
% bigrading on $S$ from that of $R[a]$, we will write the $A$-degree
% last and separate it with a semicolon.

\subsection{Hyperplane arrangements}\label{subsec:arrs}

If $\M$ has no loops, then each $f_i\neq 0$,
and we obtain a hyperplane arrangement $\A$ from $W$ with hyperplanes
\[
\set{H_i:=\ker f_i : 1\leq i\leq n}.
\]
Similarly, if $\M$ has no coloops, then each $g_i\neq 0$, and we obtain a
dual hyperplane arrangement $\A^\perp$ from $W^\perp$ in the same way.
Given a hyperplane arrangement $\A$, we let
\begin{align*}
  U(\A)& := W-\bigcup_{i\in[n]}H_i\\
  & = W\cap \T^n
\end{align*}
denote its complement, and let $\P U(\A)\subseteq \P W$ denote its
quotient by the diagonal action of $\k^{\times}$.

\subsection{The critical set variety}\label{subsec:csv}
From various points of view it is of interest to consider rational
functions with poles and zeroes on hyperplanes.  Here, we take $\k=\C$.

Given a lattice vector $\lambda\in\Z^{[n]}$ and a realization
$f\colon W\to\C^{[n]}$ of a matroid without loops, let
\[
\Phi_\lambda\coloneqq\prod_{i\in[n]} f_i^{\lambda_i},
\]
regarded as a function $\Phi_\lambda\colon U(\A)\to\C^\times$.  If
$\sum_i \lambda_i=0$, then $\Phi_\lambda$ induces a well-defined function
on $\P U(\A)$ as well.

By definition, the (affine) critical set variety $\X(W)\subseteq
W\times \C^{[n]}\subseteq \C^{[n]}\times\C^{[n]}$ is the Zariski closure
of the pairs $(p,\lambda)$ for which $p$ is a critical point of the function
$\Phi_{\lambda}$.  We refer to \cite{CDFV11,DGS11} and \cite{Huh13} for
more details.  The critical set variety is also an instance of a maximal
likelihood variety---see, for example, \cite{HS14}.

If one restricts $p$ to $U(\A)$, then $p$ is a critical point of
$\Phi_\lambda$ if and only if the $1$-form
\begin{align*}
  \omega_\lambda  &\coloneqq d\log\Phi_\lambda = \sum_{i=1}^n\lambda_i df_i/f_i
\end{align*}
vanishes at $p$.  A calculation shows that this is the case precisely when $(\lambda_1/f_1(p), \ldots, \lambda_n/f_n(p))$ is in $W^\perp$, which is to say that there exists a $q$ such that for all $i$, $\lambda_i=f_i(p)g_i(q)$.  Then $\X(W)\cap(\T^n\times\T^n)$ is parameterized by
\begin{equation}\label{eq:f,fg}
(f_1(p),f_2(p),\ldots,f_n(p),f_1(p)g_1(q),\ldots,f_n(p)g_n(q))
\end{equation}
for $(p,q)\in W\times W^\perp$.
\begin{proposition}\label{prop:coordsofX}
  For any realization $W$ of a matroid without loops, we have 
  \begin{align*}
    \X(W)&=\spec\C[f_1,\ldots,f_n,f_1g_1,\ldots,f_ng_n]\\
    &=\spec R[f_1g_1,\ldots,f_ng_n].
  \end{align*}
\end{proposition}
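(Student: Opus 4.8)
The plan is to present $\X(W)$ as the Zariski closure of the image of one explicit morphism and then read off the coordinate ring of that closure. Let $\psi\colon W\times W^\perp\to\C^{[n]}\times\C^{[n]}$ be the morphism given on points by \eqref{eq:f,fg}, that is, $\psi(p,q)=\bigl(f_1(p),\ldots,f_n(p),f_1(p)g_1(q),\ldots,f_n(p)g_n(q)\bigr)$. As recalled just before the statement, for $p\in U(\A)$ the pair $(p,\lambda)$ is a critical pair precisely when $(\lambda_i/f_i(p))_i\in W^\perp$, i.e.\ when $\lambda=(f_i(p)g_i(q))_i$ for some $q\in W^\perp$; equivalently, the set of critical pairs is $\psi\bigl(U(\A)\times W^\perp\bigr)$, and hence $\X(W)=\overline{\psi(U(\A)\times W^\perp)}$. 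It therefore remains to (i) enlarge the domain from $U(\A)\times W^\perp$ to all of $W\times W^\perp$ without changing this closure, and (ii) identify $\overline{\psi(W\times W^\perp)}$ with $\spec\C[f_1,\ldots,f_n,f_1g_1,\ldots,f_ng_n]$.

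For (i): since $\M$ has no loops, each $f_i$ is a nonzero linear form on $W$, so $U(\A)=W\setminus\bigcup_i H_i$ is a nonempty Zariski-open, hence dense, subset of the irreducible variety $W$. Consequently $U(\A)\times W^\perp$ is dense in $W\times W^\perp$, and, $\psi$ being a morphism, $\psi(U(\A)\times W^\perp)$ is dense in $\psi(W\times W^\perp)$; taking closures gives $\X(W)=\overline{\psi(W\times W^\perp)}$. Note that only the first factor may be confined to a torus here: if $\M$ has a coloop $j$ then $g_j\equiv 0$, so the fully toric locus $\X(W)\cap(\T^n\times\T^n)$ is empty and cannot be used to recover $\X(W)$.

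For (ii): this is the standard description of the closure of the image of a morphism of affine varieties. Let $\psi^{\sharp}$ denote the comorphism. Its target, the coordinate ring of $W\times W^\perp$, is $\C[W]\otimes_\C\C[W^\perp]=R\otimes_\C R^\perp$ (see \S\ref{subsec:rings}), which is a polynomial ring and in particular reduced; hence $\ker\psi^{\sharp}$, being the preimage of $(0)$, is a radical ideal, and so $\overline{\psi(W\times W^\perp)}=\spec(\operatorname{im}\psi^{\sharp})$. By \eqref{eq:f,fg}, $\psi^{\sharp}$ carries the $2n$ coordinate functions on $\C^{[n]}\times\C^{[n]}$ to $f_1,\ldots,f_n$ and to $f_1g_1,\ldots,f_ng_n$, so $\operatorname{im}\psi^{\sharp}=\C[f_1,\ldots,f_n,f_1g_1,\ldots,f_ng_n]$. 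This proves the first displayed equality; the second is immediate, since $R=\C[f_1,\ldots,f_n]$ gives $R[f_1g_1,\ldots,f_ng_n]=\C[f_1,\ldots,f_n,f_1g_1,\ldots,f_ng_n]$.

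The only substantive step is the identification $\X(W)=\overline{\psi(U(\A)\times W^\perp)}$ in the first paragraph, and this is essentially the critical-point computation already recorded before the statement. One caveat there is that the exponent vectors $\lambda$ are a priori integral, so that the set of critical pairs is really only dense in $\psi(U(\A)\times W^\perp)$; but this is harmless because $\X(W)$ is defined as a Zariski closure and the condition $\omega_\lambda|_p=0$ is invariant under rescaling $\lambda$ (cf.\ the references cited there). Steps (i) and (ii) are then routine.
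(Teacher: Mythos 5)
Your proof is correct and takes a genuinely different route from the paper's. The paper introduces $\X'=\spec R[f_1g_1,\ldots,f_ng_n]$ as a candidate variety, shows that the torus slices $\X'\cap(\T^n\times\T^n)$ and $\X(W)\cap(\T^n\times\T^n)$ agree, claims the torus is dense in $\X(W)$ so that $\X(W)\subseteq\X'$, and then concludes equality from the fact that both are irreducible of dimension $n$, citing \cite[Cor.\ 2.10]{CDFV11} for the irreducibility of $\X(W)$. You instead present $\X(W)$ directly as the closure of the image of the morphism $\psi\colon W\times W^\perp\to\C^{[n]}\times\C^{[n]}$ and read off the coordinate ring of that closure as $\operatorname{im}\psi^\sharp$. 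Your approach has two advantages: it gets the irreducibility and the description of $\X(W)$ for free from the morphism, without any appeal to \cite{CDFV11}, and it only requires density of $U(\A)$ in $W$ (the first factor). Your remark about coloops is well taken: if $\M$ has a coloop $j$ then $g_j\equiv 0$, so $\X(W)\cap(\T^n\times\T^n)=\emptyset$, and the paper's assertion that the torus is dense in $\X(W)$ is literally false in that case; your domain enlargement sidesteps this entirely. (Your caveat about integrality of $\lambda$ is a real issue with the stated definition, but it affects the paper's argument equally, and ``invariance under rescaling'' alone does not give Zariski density of the lattice pairs when $n-r\geq 2$; the usual remedy is to define $\X(W)$ as the closure of the full incidence variety over $\C$, which is the convention implicit in \cite{CDFV11}.)
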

\begin{proof}
  Let $\X'$ denote
  the spectrum of the domain $R[f_1g_1,\ldots,f_ng_n]$, viewed as a closed subscheme of $\C^{[n]}\times\C^{[n]}$ via the $\C$-algebra map $\C[s_1,\ldots,s_n,t_1,\ldots,t_n]\to R[f_1g_1,\ldots,f_ng_n]$ induced by $s_i\mapsto f_i$ and $t_i\mapsto f_ig_i$ $(i=1,\ldots,n)$.  The intersections
  of $\X'$ and $\X(W)$ with $\T^n\times\T^n$ agree.
  By construction, the torus $\T^n\times\T^n$ is dense in $\X(W)$: taking
  closures, we see $\X'$ contains $\X(W)$.  Clearly $\X'$ is irreducible.
  By inverting each $f_i$ we see $\dim\X'=r+(n-r)=n$.  On the other hand,
  $\X(W)$ is also irreducible by \cite[Cor.\ 2.10]{CDFV11}) and has the
  same dimension, so they are equal.
\end{proof}
If $W$ realizes a matroid with loops, we can take the equality above as
the definition of $\X(W)$.
Clearly, deleting the loops leaves the algebra unchanged.  We can also
define $\X(W)$ over an arbitrary field $\k$ in this way.

The defining ideal of $\X(W)$ will be important in what follows.
\begin{definition}\label{def:IX}
The subalgebra $R[f_1g_1,\ldots,f_ng_n]$ above is the image of a ring homomorphism $R[a]\to
S$ given by sending $a_i\mapsto f_ig_i$ for $1\leq i\leq n$.  We let $I_{\X}$
denote the kernel, a prime ideal of codimension $n$ in $R[a]$.  We note
that $I_{\X}$ inherits the standard bigrading of $R[a]$.
\end{definition}
%\item
%  Let $\fra\coloneqq \fra_{W,W^\perp}\coloneqq(f_1g_1,\ldots,f_ng_n)$, an ideal of $S$, which
%  we will call the \emph{ideal of pairs.}  
%
%\item Let $R\angl{f_1g_1,\ldots,f_ng_n}$ be the $R$-submodule of $S$ generated
%  by the products, which we will write $R\angl{fg}$ for short.  Clearly,
%  \[
%  R\angl{fg} = \fra_{(\cdot,1)} \cong  \bigoplus_{i\geq0}\left(R[a]/I_{\X}\right)_{(i;1)} = (R[a]/I_\X)_{(\cdot;1)}.
%  \]
%  \end{itemize}
%\end{definition}
% These objects are closely related.  By comparing definitions, we see:
% \begin{proposition}\label{prop:slice}\hphantom{newline}
%   \begin{enumerate}[label=\textnormal{(\alph*)}]
%   \item\label{prop:slice.1} As graded $R$-modules,
%     \[
%     R\angl{fg} \cong \bigoplus_{i\geq0}\left(R[a]/I_{\X}\right)_{(i;1)}.
%     \]
%   \item\label{prop:slice.2} Let $E=S_{(\cdot,1)}\cong R\otimes_{\k}
%     (W^\perp)^*$, a free $R$-module of rank $n-r$.  Then
%     \[
%     \begin{tikzcd}[column sep=small]
%       0\ar[r] & R\angl{fg}\ar[r] & E\ar[r] & (S/\fra)_{(\cdot,1)}\ar[r] & 0
%     \end{tikzcd}
%     \]
%     is exact.
% \end{enumerate}  
% \end{proposition}

\subsection{Critical sets and logarithmic forms}\label{subsec:log}

Let $\der(\A)$ denote the $R$-module of logarithmic derivations on $\A$, and
$\Omega^1(\A)=\der(\A)^\vee$ the dual module of logarithmic $1$-forms --- see,
e.g., \cite{OTbook} for details.  The logarithmic forms appear naturally
in relation to the critical set variety.  We will grade derivations so that
$\deg(\partial/\partial x_i)=-1$ for each $i$.
\begin{theorem}[\cite{CDFV11}]
  The variety $\X(W)$ is the zero set of the ideal of $R[a]$ given
  by 
\[
  I_{\log}\coloneqq \left(\angl{\theta,\omega_a}\colon \theta\in\der(\A)\right),
  \]
  where $\omega_a\coloneqq\sum_i a_idf_i/f_i$.%
\footnote{
Equivalently, $I_{\log}$ is generated by applying logarithmic derivations to
the formal expression $\log\big(\prod_{i=1}^nf_i^{a_i}\big)$.}
\end{theorem}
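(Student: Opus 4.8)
The plan is to prove the two inclusions of $\X(W)=V(I_{\log})$ separately, the first being routine and the second carrying all the content. For $\X(W)\subseteq V(I_{\log})$ I would show each generator $\angl{\theta,\omega_a}$ lies in the prime ideal $I_{\X}$ of \Cref{def:IX}. Writing $\theta(f_i)=f_ih_i^\theta$ with $h_i^\theta\in R$ --- which is precisely what $\theta\in\der(\A)$ means --- we have $\angl{\theta,\omega_a}=\sum_i a_ih_i^\theta\in R[a]$, and since $\X(W)$ is irreducible with its locus of points of the form \eqref{eq:f,fg} dense, it suffices to check vanishing there. After the substitution $a_i\mapsto f_i(p)g_i(q)$ the generator becomes $\sum_i f_i(p)g_i(q)h_i^\theta(p)=\sum_i g_i(q)\,\theta(f_i)(p)=\sum_i g_i(q)(\theta_p)_i=\angl{g(q),\theta_p}$ (using $f_i=x_i|_W$ to identify $\theta(f_i)(p)$ with the $i$th coordinate of $\theta_p\in W$), and this is $0$ because $\theta_p\in W$ while $g(q)\in W^\perp$ is annihilated by $W$ under the standard pairing. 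Hence $I_{\log}\subseteq I_{\X}$, so $\X(W)=V(I_{\X})\subseteq V(I_{\log})$.

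For the reverse inclusion I would first record a structural reduction. The map $\theta\mapsto(h_1^\theta,\dots,h_n^\theta)$ identifies $\der(\A)$ with the set of tuples $(r_1,\dots,r_n)\in R^n$ for which the polynomial map $p\mapsto(r_1(p)f_1(p),\dots,r_n(p)f_n(p))$ takes values in $W$ (given such a tuple, that map is a polynomial vector field, hence a derivation $\theta$ with $\theta(f_i)=r_if_i$, so $\theta\in\der(\A)$, and $h_i^\theta=r_i$; and conversely); and, unwinding \Cref{def:IX}, this condition is exactly that $\sum_i r_ia_i\in I_{\X}$. Thus the degree-one-in-$a$ part of $I_{\X}$ equals $\{\angl{\theta,\omega_a}:\theta\in\der(\A)\}$, which by definition generates $I_{\log}$; so $I_{\log}$ is the $R[a]$-ideal generated by the linear-in-$a$ elements of $I_{\X}$, and $V(I_{\log})\subseteq\X(W)$ amounts to the claim that those linear equations already cut out $\X(W)$ set-theoretically. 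Equivalently, writing $L_p:=\{(h_1^\theta(p),\dots,h_n^\theta(p)):\theta\in\der(\A)\}\subseteq\k^n$, so that the fibre of $V(I_{\log})$ over $p\in W$ is $L_p^\perp$, we must show $L_p^\perp$ lies in the fibre of $\X(W)$ over $p$.

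I would check this fibrewise, by induction on $\rank\M$. For $p\in U(\A)$ every $f_i$ is invertible near $p$, so $\der(\A)$ agrees with $\der(R)$ there, the evaluation $\theta\mapsto\theta_p$ surjects onto $T_pW=W$, and $L_p=\operatorname{diag}(1/f_1(p),\dots,1/f_n(p))\,W$, whence $L_p^\perp=\operatorname{diag}(f_1(p),\dots,f_n(p))\,W^\perp$ --- which is exactly the fibre of $\X(W)$ over $p$, the parameterization \eqref{eq:f,fg} being already closed over $U(\A)$. For $p$ lying on the hyperplanes indexed by $Z$, with flat $X=\bigcap_{i\in Z}H_i$ through it, localizing at $p$ replaces $\der(\A)$ by $\der(\A_X)$, and the product decomposition $\A_X=\A'\times(\text{empty arrangement on }X)$, with $\A'$ essential of rank $<\rank\M$, lets one compute $L_p$: the coordinates $j\notin Z$ contribute $1/f_j(p)$ times the projection of $X$ (the trivial factor realizes every tangent direction along $X$), while the coordinates $i\in Z$ receive only the contribution of the degree-one part of $\der(\A')$ at the origin of $W/X$, the higher-degree generators contributing nothing there. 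One then matches $L_p^\perp$ against the fibre of $\X(W)$ over $p$; this is the step I expect to be the only real obstacle, because that fibre is in general strictly larger than $\operatorname{diag}(f(p))\,W^\perp$ --- as $\X(W)$ is a Zariski closure, the fibre over $p$ is a limit of the torus fibres $\operatorname{diag}(f(p'))\,W^\perp$ as $p'\to p$ in $W$ --- and the theorem is exactly the assertion that this closure-limit is recovered on the nose as $L_p^\perp$, i.e.\ that every linear form vanishing on the $\X(W)$-fibre over $p$ comes from a logarithmic derivation. The induction carries this down to the lower-rank arrangement $\A'$, the base cases (the empty arrangement and a single hyperplane) being immediate.
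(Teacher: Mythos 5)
The paper cites this result from \cite{CDFV11} without giving a proof, so there is no internal argument to compare against; I assess your proposal on its own. Your easy direction is correct, and so is the structural reduction: your identification of the $a$-degree-one slice of $I_{\X}$ with $\{\angl{\theta,\omega_a}:\theta\in\der(\A)\}$ is exactly Proposition~\ref{prop:a-deg1} of the paper, and the reformulation of the hard direction as ``$L_p^\perp$ equals the fibre of $\X(W)$ over every $p\in W$'' is a correct equivalence.

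That equivalence, however, is where the argument stops. As you note yourself, for $p$ on the hyperplanes the $\X(W)$-fibre is a Zariski-closure limit that is a priori only \emph{contained} in $L_p^\perp$, and ``the theorem is exactly the assertion that this closure-limit is recovered on the nose'' --- so your fibrewise claim is equivalent to the theorem rather than a lemma en route to it, and it is not established. The sketched induction does not close the gap: localization at $p$ plus the product decomposition $\A_X=\A'\times(\text{empty arrangement on }X)$ does let you compute $L_p$, but the fibre of $\X(W)$ over $p$ is a \emph{global} object --- the fibre of a closure taken in all of $W\times\k^n$, not in a neighbourhood of $p$ --- and it does not obviously restrict to, or factor through, the analogous object for the lower-rank realization associated with $\A'$. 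You would need a precise comparison of $\X(W)$ near $p$ with $\X(W')$ for the smaller arrangement near the origin, and no such comparison is stated or justified; without it the inductive hypothesis never touches the quantity you need to identify. Note also that an exact identification of boundary fibres is more than the theorem requires: given $\X(W)\subseteq V(I_{\log})$ and equality over $U(\A)\times\k^n$, it would suffice to bound $\dim L_p^\perp$ tightly enough over each flat stratum to rule out extra irreducible components of $V(I_{\log})$ over the boundary, which is a considerably weaker and more tractable statement than your exact fibrewise claim.
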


Recall that an arrangement $\A$ is said to be \emph{free} if $\der(\A)$ is a
free $R$-module.  In \cite{CDFV11}, it is shown that $\A$ is free if and only
if $I_{\log}$ is a complete intersection.
A weaker but quite useful property goes back to
\cite{OT95b}: an arrangement $\A$ is \emph{tame} if $\pdim\Omega^p(\A)\leq p$ for
$1\leq p\leq r$, where $\Omega^p(\A)\cong\big(\bigwedge^p\Omega^1(\A)\big)^{\vee\vee}$
is the module of logarithmic $p$-forms (see \cite{DS12}.)

The Theorem shows $I_{\log}\subseteq I_{\X}$, but equality fails in general --- see Example~\ref{ex:bracelet} below.
However, if $\A$ is tame, then $I_{\log}=I_{\X}$ (\cite{CDFV11}).

\begin{proposition}\label{prop:a-deg1}
  As graded $R$-modules, 
  \[
  (I_{\log})_{(\cdot;1)}=(I_{\X})_{(\cdot;1)}\cong\der(\A)(-1).
  \]
\end{proposition}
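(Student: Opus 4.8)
The plan is to identify the degree-$(\cdot\,;1)$ graded pieces of both ideals with $\der(\A)$ directly, by unwinding the definitions. An element of $R[a]$ that is homogeneous of $A$-degree $1$ is a sum $\sum_i b_i a_i$ with $b_i \in R$; the association $\sum_i b_i a_i \mapsto \sum_i b_i \partial/\partial x_i$ gives an $R$-module isomorphism between $R[a]_{(\cdot\,;1)}$ and the free module $\bigoplus_i R\,\partial/\partial x_i$ (the ambient module of all derivations), shifting internal degree by $1$ because $a_i$ has $x$-degree $0$ while $\partial/\partial x_i$ has degree $-1$. Under this identification I want to show that $(I_{\log})_{(\cdot\,;1)}$ and $(I_{\X})_{(\cdot\,;1)}$ both correspond to the submodule $\der(\A)$ of logarithmic derivations.

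First I would handle $I_{\log}$. By the definition of $\omega_a$, for a derivation $\theta = \sum_i b_i\partial/\partial x_i$ (with $b_i\in R$, i.e.\ pulled back to functions on $W$) we have $\angl{\theta,\omega_a} = \sum_i a_i\, \theta(f_i)/f_i$, which is a well-defined element of $R[a]$ precisely when $f_i \mid \theta(f_i)$ for all $i$ — that is, exactly when $\theta\in\der(\A)$. Moreover this expression has $A$-degree $1$, so it lies in $(I_{\log})_{(\cdot\,;1)}$, and conversely the generators of $I_{\log}$ of $A$-degree $1$ are exactly these. Hence $(I_{\log})_{(\cdot\,;1)}$ is the image of $\der(\A)$ under $\theta\mapsto\sum_i (\theta(f_i)/f_i) a_i$, and comparing with the identification above this is the copy of $\der(\A)(-1)$. (One should check the grading shift: $\theta(f_i)/f_i$ has the same degree as $\theta$ shifted appropriately, and multiplying by $a_i$ — degree $0$ in $x$ — lands it in the right internal degree; this is where the $(-1)$ twist comes from, matching $\deg(\partial/\partial x_i) = -1$.)

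Next, $(I_{\X})_{(\cdot\,;1)}$. Since $I_{\log}\subseteq I_{\X}$ by the cited theorem, we get $(I_{\log})_{(\cdot\,;1)}\subseteq (I_{\X})_{(\cdot\,;1)}$, so it remains to prove the reverse inclusion in $A$-degree $1$. Here I would use the explicit description from Definition~\ref{def:IX}: $I_{\X}$ is the kernel of $R[a]\to S$, $a_i\mapsto f_ig_i$. An $A$-degree-$1$ element $\sum_i b_i a_i$ lies in $I_{\X}$ iff $\sum_i b_i f_i g_i = 0$ in $S = R\otimes_\k R^\perp$. Thinking of this relation with the $g_i$ as coordinates on $W^\perp$, the vanishing of $\sum_i b_i f_i g_i$ as a function linear in the $g$'s says that the vector $(b_1 f_1,\ldots,b_n f_n)$ of elements of $R$, viewed appropriately, annihilates $W^\perp$ — equivalently lies in the $R$-span of $W = (W^\perp)^\perp$. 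This should translate into the statement that the derivation $\sum_i b_i\partial/\partial x_i$ is tangent to each hyperplane $H_i$, i.e.\ is logarithmic; I expect this to follow from the standard characterization of $\der(\A)$ via the relation $W = \ker(g)$ and a short linear-algebra argument identifying "$(b_if_i)_i$ lies in $R\cdot W$" with "$f_i \mid b_i f_i - (\text{something in } R\cdot W)$," hence with $\theta\in\der(\A)$.

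The main obstacle will be this last translation: making precise, and checking carefully, the equivalence between "$\sum_i b_i f_i g_i = 0$ in $R\otimes R^\perp$" and "$\sum_i b_i\partial/\partial x_i \in \der(\A)$." The subtlety is that $S = R\otimes_\k R^\perp$ decomposes the coordinate ring using the splitting $\k^{[n]}\cong W\oplus W^\perp$, and one must track how multiplication by the $f_i$ and $g_i$ — which are the images of the ambient coordinates $x_i$ — interacts with this splitting. Once the degree-$1$ part of $I_{\X}$ is pinned down as $\{\sum_i b_i a_i : (b_i f_i)_i \in \text{(appropriate submodule)}\}$, matching it with $\der(\A)(-1)$ and with $(I_{\log})_{(\cdot\,;1)}$ is bookkeeping. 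It is worth noting that the chain $(I_{\log})_{(\cdot\,;1)}\subseteq(I_{\X})_{(\cdot\,;1)}$ together with independent isomorphisms of both to $\der(\A)(-1)$ forces equality, giving a clean way to organize the argument and avoid re-deriving the reverse inclusion by hand if the two isomorphisms are set up compatibly.
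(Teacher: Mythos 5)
Your plan breaks down at the very first step, and the ``obstacle'' you flag in the last paragraph is in fact a genuine error, not a gap waiting to be filled in.

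The identification $\sum_i b_i a_i \mapsto \sum_i b_i\,\partial/\partial x_i$ is not the right one. The derivation corresponding to an element $\sum_i b_i a_i \in (I_\X)_{(\cdot\,;1)}$ is the unique $\theta\in\operatorname{Der}(R)$ satisfying $\theta(f_j)=b_j f_j$ for all $j$; that is, the coefficient $b_i$ plays the role of the ``logarithmic eigenvalue'' $\theta(f_i)/f_i$, \emph{not} the coefficient of $\partial/\partial x_i$. Your step on $I_\X$ correctly translates $\sum_i b_i f_i g_i = 0$ into $(b_1 f_1,\ldots,b_n f_n)\in W\otimes_\k R$, but this produces the derivation $\theta$ with $\theta(f_j) = b_j f_j$, not $\sum_i b_i\,\partial/\partial x_i$. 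These are different: take $\M=U_{1,2}$ with $f_1=f_2$ and $g_1=-g_2$, and $b_1=b_2=1$. Then $\sum b_i f_i g_i = 0$ and the correct $\theta$ is the Euler derivation (which is logarithmic), whereas $\partial/\partial x_1 + \partial/\partial x_2$, restricted to $W$ via the splitting $\k^n\cong W\oplus W^\perp$, is a nonzero constant-coefficient derivation, which is \emph{not} logarithmic. The same slip makes step~2 internally inconsistent: the map $\theta\mapsto\sum_i(\theta(f_i)/f_i)\,a_i$ is correct, but composing with your step-1 dictionary returns $\sum_i(\theta(f_i)/f_i)\,\partial/\partial x_i$, which is not $\theta$.

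The paper's proof gets this right by inserting the missing factor of $f_i$: after choosing coordinates so that $f_i = x_i$ for $i\leq r$, it sends $u = \sum_i c_i a_i$ to $\theta = \sum_{i\leq r} c_i x_i\,\partial/\partial x_i$. The real content is then showing $\theta(f_j)=c_j f_j$ for $j>r$, which is done by differentiating the relation $\sum_i c_i f_i g_i = 0$ with respect to $x_j$ and using the orthogonality identity $\partial f_j/\partial x_i = -\partial g_i/\partial x_j$. Your reformulation ``$(b_if_i)_i$ lies in $R\cdot W$'' is the correct intermediate step and actually leads to the proof, but you then attribute it to the wrong derivation; fixing that, and supplying the coordinate computation, is exactly what the paper does.
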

\begin{proof}
  Noting that $I_{\log}\subseteq I_{\X}$, we will construct maps
  $(I_{\X})_{(p;1)}\to\der(\A)_{p-1}\to  (I_{\log})_{(p;1)}$ which compose
  to the inclusion $I_{\X}\subseteq I_{\log}$, for each integer $p$.
  Any $u\in (I_{\X})_{(p;1)}$ can be written
  \begin{equation}\label{eq:syz}
  u=\sum_{i=1}^n c_ia_i \quad\text{where}\quad
  \sum_{i=1}^n c_if_ig_i=0,
  \end{equation}
  for some homogeneous elements $c_i\in R$.
  By reordering the ground set of $\M$ if necessary, we assume that $[r]$ is
  independent in $\M$, so that we may choose bases for $W$ and $W^\perp$,
  respectively, with $f_i=x_i$ for $1\leq i\leq r$ and $g_i=x_i$ for
  $r+1\leq i\leq n$.

  Let $\theta=\sum_{i=1}^rc_ix_i\partial/\partial x_i$.  We claim $\theta(f_j)=
  c_jf_j$ for all $j$, so $\theta\in\der(\A)$.  If so, 
\[
\angl{\theta,\omega_a}=\sum_{i=1}^na_i\theta(f_i)/f_i =u,
\]
which implies $u\in I_{\log}$.

For $1\leq j\leq r$ the claim is obvious.  For
$r+1\leq j\leq n$, we compute as follows.
  By orthogonality, $\partial f_j/\partial x_i =
  -\partial g_i/\partial x_j$ for all $1\leq i\leq r$ and $r+1\leq j\leq n$.
  Differentiating \eqref{eq:syz} by $x_j$ shows, for each $j$,
  \begin{align*}
    c_jf_j &= -\sum_{i=1}^r c_if_i\frac{\partial g_i}{\partial x_j} \\
    &= \sum_{i=1}^r c_i f_i \frac{\partial f_j}{\partial x_i}\\
    &= \theta(f_j),
  \end{align*}
  as required.
\end{proof}

\begin{example}\label{ex:bracelet}
  Consider the arrangement $\A$ of $9$ hyperplanes in $\C^4$ defined by the
  columns of the matrix
\[
\setcounter{MaxMatrixCols}{20}
\begin{pmatrix}
1 & 0 & 0 & 1 & 0 & 0 & 1 & 1 & 0 \\
0 & 1 & 0 & 0 & 1 & 0 & 1 & 0 & 1 \\
0 & 0 & 1 & 0 & 0 & 1 & 0 & 1 & 1 \\
0 & 0 & 0 & 1 & 1 & 1 & 1 & 1 & 1 \\
\end{pmatrix}.
\]
This arrangement has $\pdim\Omega^1(\A)=2$, so it is not tame.  A calculation
with Macaulay2~\cite{M2} shows that $I_{\log}$ has an embedded prime at the
origin.

Clearly $I_{\log}$ is always generated by elements of bidegree $(p;1)$, for
integers $p\geq 0$.  For this arrangement, however, computation also shows
that $I_{\X}$ has a generator of degree $(2;2)$.
\end{example}

% Let $\Omega^p_{R[a]/R}(\A)\coloneqq \Omega^p(\A)\otimes_{R}A$, for $0\leq p$, a
% graded-commutative $R[a]$-algebra.  Multiplication by the degree-$1$
% element $\omega_a$ gives a cochain complex
% \begin{equation}\label{eq:Omega_cplx}
% \begin{tikzcd}[column sep=small]
%   0\ar[r] & \Omega^0_{R[a]/R}(\A) \ar[r,"\omega_a"] & \cdots\ar[r,"\omega_a"] &
%   \Omega^{r-1}_{R[a]/R}(\A)\ar[r,"\omega_a"] & \Omega^r_{R[a]/R}(\A)\ar[r] &
%   (R[a]/I_{\X})(r)\ar[r] & 0.
% \end{tikzcd}
% \end{equation}
% With respect to
% the $A$-grading, the complex is linear, so we will
% ignore the shifts for clarity.  It is known 
% that the sequence \eqref{eq:Omega_cplx} is exact when $\A$ is tame
% (\cite[Thm.\ 3.5]{CDFV11}). However, even when $\A$ is not tame, we know of no examples for which the sequence \eqref{eq:Omega_cplx} is not exact.

\subsection{Associated primes and slicing}\label{subsec:ass}
We end this section with some technical results which we will use to prove \Cref{cor:min_primes2}.  

\begin{lemma}\label{prim-restr}
    Let $S$ be a bigraded Noetherian ring, $R\coloneqq S_{(\cdot,0)}$. Let $k\in \Z$, and let $M$ be a graded $S$-module. Let $\frp$ be a homogeneous prime ideal in $S$. If $N$ is a $\frp$-primary graded submodule of $M$ and $N_{(\cdot,k)}\neq M_{(\cdot,k)}$, then $N_{(\cdot,k)}$ is a $\frp_{(\cdot,0)}$-primary submodule of $M_{(\cdot,k)}$.
\end{lemma}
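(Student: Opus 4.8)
**The plan is to reduce to the module-theoretic definition of primary submodules and check the two defining conditions directly.**

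First I would recall what must be verified. Writing $R \coloneqq S_{(\cdot,0)}$, $\frq \coloneqq \frp_{(\cdot,0)}$, $M' \coloneqq M_{(\cdot,k)}$ (an $R$-module), and $N' \coloneqq N_{(\cdot,k)}$ (an $R$-submodule of $M'$), I must show: (i) $N' \neq M'$, which is the hypothesis; and (ii) for every homogeneous $a \in R$ and every homogeneous $m \in M'$, if $am \in N'$ but $m \notin N'$, then some power of $a$ annihilates $M'/N'$. Since everything in sight is graded and $R$ is generated in degrees with second coordinate $0$, it suffices to check the primary condition on homogeneous elements (a standard reduction for graded modules over graded rings: an associated prime of a graded module is homogeneous, and $N'$ is primary iff $M'/N'$ has a unique associated prime).

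The key step is to lift the situation from $M'$ to $M$. Given homogeneous $a \in R$ and homogeneous $m \in M'$ with $am \in N'$ and $m \notin N'$, I view $a$ and $m$ as homogeneous elements of $S$ and $M$ respectively (note $a$ has $A$-degree, i.e.\ second-coordinate degree, equal to $0$). Then $am \in N_{(\cdot,k)} \subseteq N$ and $m \in M_{(\cdot,k)} \setminus N_{(\cdot,k)}$, so in particular $m \notin N$. Since $N$ is $\frp$-primary in $M$, we get $a \in \sqrt{\operatorname{Ann}_S(M/N)}$, so $a^t M \subseteq N$ for some $t \geq 1$. Intersecting with the degree-$(\cdot,k)$ part — and using that multiplication by $a^t$ preserves the second-coordinate grading since $a$ has $A$-degree $0$ — gives $a^t M' = a^t M_{(\cdot,k)} = (a^t M)_{(\cdot,k)} \subseteq N_{(\cdot,k)} = N'$. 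Hence $a^t$ annihilates $M'/N'$, which is exactly the primary condition. This also shows $\sqrt{\operatorname{Ann}_R(M'/N')} \subseteq \frq$; combined with $N' \neq M'$ (so $M'/N' \neq 0$ and the radical of its annihilator is a proper prime in this graded setting), one concludes the radical equals $\frq$ and $N'$ is $\frq$-primary.

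The main obstacle — really the only subtlety — is making sure the bookkeeping with the bigrading is airtight: one must confirm that elements of $R = S_{(\cdot,0)}$ shift only the first-coordinate degree, so that multiplication by such elements is compatible with the decomposition $M = \bigoplus_k M_{(\cdot,k)}$ and the operation $N \mapsto N_{(\cdot,k)}$ commutes with it. Once that is in place, and once one invokes the standard fact that over a graded ring a graded submodule is primary precisely when the radical of the annihilator of the quotient is prime (equivalently, there is a single associated prime, necessarily homogeneous), the argument is essentially the bulleted computation above. No deep input is needed beyond the graded theory of associated primes.
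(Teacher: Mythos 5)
Your argument is the paper's argument in its essentials: lift a homogeneous zero-divisor on $M_{(\cdot,k)}/N_{(\cdot,k)}$ up to $M$, invoke $\frp$-primaryness of $N$ in $M$, and descend, the key bookkeeping being that $a\in R=S_{(\cdot,0)}$ makes multiplication by $a$ compatible with taking the degree-$(\cdot,k)$ slice. The one soft spot is your final sentence: from ``$\sqrt{\operatorname{Ann}_R(M'/N')}\subseteq\frq$'' together with ``that radical is a proper prime'' one cannot conclude equality with $\frq$ --- you also need the reverse inclusion $\frq\subseteq\sqrt{\operatorname{Ann}_R(M'/N')}$, which is easy (for homogeneous $a\in\frq\subseteq\frp=\sqrt{\operatorname{Ann}_S(M/N)}$ one has $a^tM\subseteq N$ for some $t$, hence $a^tM_{(\cdot,k)}\subseteq N_{(\cdot,k)}$) but is not an automatic consequence of what you wrote. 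With that detail supplied your proof and the paper's coincide; the paper's version is simply terser, stating only that homogeneous zero-divisors land in $\frp_{(\cdot,0)}$ and leaving the identification of the radical to the reader.
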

\begin{proof}
    Let $x\in M_{(\cdot,k)}$ with $x\notin N_{(\cdot,k)}$, and let $a\in R$ be homogeneous. Suppose $ax\in N_{(\cdot,k)}$. Then $ax\in N$, so since $N$ is $\frp$-primary and $x\notin N$, $a\in \frp$. Thus, $a \in \frp_{(\cdot,0)}$. Hence, $N_{(\cdot,k)}$ is $\frp_{(\cdot,0)}$-primary.
\end{proof}

\begin{example}
    The condition $N_{(\cdot,k)}\neq M_{(\cdot,k)}$ in \Cref{prim-restr} is necessary: Let $S=\C[x,y]$ be standard bigraded, $M=S$, and $N=\frp=(x,y)$. Then $M_{(\cdot,1)}=N_{(\cdot,1)}=R$, so in particular $M_{(\cdot,1)}/N_{(\cdot,1)}$ has no associated primes.
\end{example}

\begin{corollary}\label{ass-restr}
    Let $S$ be a bigraded Noetherian ring, $R\coloneqq S_{(\cdot,0)}$. Let $k\in \Z$, let $M$ be a graded $S$-module, let $\frp_1,\ldots,\frp_s$ be the associated primes of $M$, and let $N_1\cap\cdots \cap N_s$ be a minimal (homogeneous) primary decomposition of $0$ in $M$, where $N_i$ is $\frp_i$-primary.
    \begin{enumerate}[label=\textnormal{(\alph*)}]
        \item\label{ass-restr.ass} The associated primes of $M_{(\cdot,k)}$ are contained in $\{(\frp_i)_{(\cdot,0)} : (N_i)_{(\cdot,k)}\neq M_{(\cdot,k)}\}$, hence also in $\{\frp_{(\cdot,0)} : \frp\text{ an associated prime of }M\}$.
        
        \item\label{ass-restr.min} The minimal primes of $M_{(\cdot,k)}$ are $\min\{(\frp_i)_{(\cdot,0)} : (N_i)_{(\cdot,k)}\neq M_{(\cdot,k)}\}$.
    \end{enumerate}
\end{corollary}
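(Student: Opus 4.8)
The plan is to restrict the given homogeneous primary decomposition $0=N_1\cap\cdots\cap N_s$ to the degree-$k$ strand of the second grading and feed it into \Cref{prim-restr} one term at a time. First I would observe that, because each $N_i$ is a graded submodule and $R=S_{(\cdot,0)}$ preserves the degree-$k$ strand, each $(N_i)_{(\cdot,k)}=N_i\cap M_{(\cdot,k)}$ is an $R$-submodule of $M_{(\cdot,k)}$, and intersecting the decomposition with $M_{(\cdot,k)}$ yields $0=\bigcap_{i\in\Lambda}(N_i)_{(\cdot,k)}$ inside $M_{(\cdot,k)}$, where $\Lambda\coloneqq\{i : (N_i)_{(\cdot,k)}\neq M_{(\cdot,k)}\}$; the indices $i\notin\Lambda$ contribute the whole module and may be discarded. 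By \Cref{prim-restr}, each remaining term $(N_i)_{(\cdot,k)}$ with $i\in\Lambda$ is $(\frp_i)_{(\cdot,0)}$-primary, so this exhibits a primary decomposition of $0$ in the $R$-module $M_{(\cdot,k)}$ (not necessarily irredundant, and with possible repetitions among the associated primes).

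For \ref{ass-restr.ass}, I would invoke the standard fact that the primes appearing in any primary decomposition of $0$ account for all associated primes: the canonical injection $M_{(\cdot,k)}\hookrightarrow\bigoplus_{i\in\Lambda}M_{(\cdot,k)}/(N_i)_{(\cdot,k)}$ together with $\mathrm{Ass}_R\big(M_{(\cdot,k)}/(N_i)_{(\cdot,k)}\big)\subseteq\{(\frp_i)_{(\cdot,0)}\}$ forces $\mathrm{Ass}_R(M_{(\cdot,k)})\subseteq\{(\frp_i)_{(\cdot,0)} : i\in\Lambda\}$, which is the first inclusion. The second inclusion is immediate, since $\Lambda\subseteq\{1,\ldots,s\}$ and $\{\frp_1,\ldots,\frp_s\}=\mathrm{Ass}(M)$.

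For \ref{ass-restr.min}, I would read the support directly off the restricted decomposition: $\sqrt{\mathrm{Ann}_R(M_{(\cdot,k)})}=\bigcap_{i\in\Lambda}(\frp_i)_{(\cdot,0)}$, hence $\mathrm{Supp}_R(M_{(\cdot,k)})=\bigcup_{i\in\Lambda}V\big((\frp_i)_{(\cdot,0)}\big)$, and the minimal elements of a finite union of irreducible closed sets $V(\frq_i)$ are exactly the minimal members of $\{\frq_i\}$. Equivalently, one can argue from \ref{ass-restr.ass}: minimal primes of a module are associated, so they lie in $\{(\frp_i)_{(\cdot,0)} : i\in\Lambda\}$ by \ref{ass-restr.ass}, while every member of that set lies in $\mathrm{Supp}_R(M_{(\cdot,k)})$; hence the minimal primes of $M_{(\cdot,k)}$ are precisely the minimal members of the set.

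I do not anticipate a genuine obstacle --- the argument is essentially an exercise in combining \Cref{prim-restr} with the formalism of primary decomposition over the Noetherian ring $R$. The one subtlety to flag is that the restriction to $\Lambda$ must be made \emph{before} applying \Cref{prim-restr} (the Example immediately following that Lemma shows the conclusion really can fail for the discarded indices), and that the restricted decomposition need not be irredundant even after discarding them; this is exactly why \ref{ass-restr.min} is phrased with a $\min$ rather than as an equality with all of $\{(\frp_i)_{(\cdot,0)} : i\in\Lambda\}$.
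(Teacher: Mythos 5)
Your proposal is correct and follows essentially the same route as the paper: restrict the given primary decomposition to the degree-$k$ strand, discard the indices for which the restricted piece is all of $M_{(\cdot,k)}$, apply \Cref{prim-restr} to see the surviving terms are primary, and then invoke the standard facts relating primary decompositions of $0$ to associated and minimal primes. You merely unpack those standard facts (the injection $M_{(\cdot,k)}\hookrightarrow\bigoplus_{i\in\Lambda}M_{(\cdot,k)}/(N_i)_{(\cdot,k)}$ for \ref{ass-restr.ass}, and the radical-annihilator/support computation for \ref{ass-restr.min}) where the paper simply cites them.
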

\begin{proof}
    By \Cref{prim-restr}, $\bigcap_i (N_i)_{(\cdot,k)}$, where $i$ runs through those indices for which $(N_i)_{(\cdot,k)}\neq M_{(\cdot,k)}$, is a (not necessarily minimal) primary decomposition of $0$ in $M_{(\cdot,k)}$. Now use standard facts relating the associated primes of $M$ to primary decompositions of $0$ in $M$ to get \ref{ass-restr.ass}. %To prove \ref{ass-restr.min}, we just need to show that $\frp\subseteq \frq$ implies $\frp_{(\cdot,0})\subseteq \frq_{(\cdot,0)}$, but this is obvious.
    
    We now prove \ref{ass-restr.min}. A standard fact about minimal primes says that if $L$ is a finitely-generated $R$-module, $\frq_1,\ldots,\frq_t$ are (not necessarily distinct) primes in $R$, and $K_1\cap\cdots \cap K_t$ is a not necessarily minimal primary decomposition of $0$ in $L$, then the minimal primes of $L$ are exactly the minimal elements of $\{\frq_1,\ldots,\frq_t\}$. Now use that $\bigcap_i (N_i)_{(\cdot,k)}$, where $i$ runs through those indices for which $(N_i)_{(\cdot,k)}\neq M_{(\cdot,k)}$, is a primary decomposition of $0$ in $M_{(\cdot,k)}$, and that the relevant $(N_i)_{(\cdot,k)}$ are $(\frp_i)_{(\cdot,0)}$-primary.
\end{proof}

\section{The ideal of pairs}\label{sec:fra}
In this section, we introduce and study the ideal of pairs associated with
a pair of matroid realizations $(W,W^\perp)$.  This ideal determines a
subspace arrangement which we are able to understand set-theoretically but
not scheme-theoretically.  Syzygies of the ideal of pairs contain information
about logarithmic derivations on both hyperplane arrangements $\A$ and
$\A^\perp$.  % GD: don't want to repeat the intro, yet give a quick overview?

\begin{definition}
    The \emph{ideal of pairs} associated to a realization $W$ is
the ideal
\[
\fra\coloneqq \fra_{W,W^\perp}\coloneqq(f_1g_1,\ldots,f_ng_n)
\]
of $S$.
\end{definition}

%  we will call the \emph{ideal of pairs.}  
%
%\item Let $R\angl{f_1g_1,\ldots,f_ng_n}$ be the $R$-submodule of $S$ generated
%  by the products, which we will write $R\angl{fg}$ for short. 
Clearly,
 \[
 \fra_{(\cdot,1)} \cong  \bigoplus_{i\geq0}\left(R[a]/I_{\X}\right)_{(i;1)} = (R[a]/I_\X)_{(\cdot;1)}.
 \]

\subsection{Syzygies of \texorpdfstring{$\fra$}{the ideal of pairs}}\label{subsec:syz}
Relations amongst the products $\set{f_ig_i}$ are related to the discussion
in Section~\S\ref{subsec:log}, and Proposition~\ref{prop:a-deg1} can be reformulated as follows.  Let $K$ denote the kernel of the homomorphism
$S[a]_{(\cdot,\cdot;1)}(-1,-1)\to \fra$ sending $a_i$ to $f_ig_i$.
\begin{theorem}\label{thm:slices}
  Let $W$ be a subspace realizing a matroid $\M$.
  Let $\Pi$ be the partition of $[n]$ given by the connected components of $\M$.
  \begin{itemize}
  \item $K_{(1,1)}$ is spanned by sums $\sum_{i\in C}a_i$, for each
    connected component $C$ in $\Pi$.
  \item $K_{(\cdot,1)}\cong \der(\A)(-1)$ as graded $R$-modules, and
  \item $K_{(1,\cdot)}\cong \der(\A^\perp)(-1)$ as graded $R^{\perp}$-modules.
  \end{itemize}
\end{theorem}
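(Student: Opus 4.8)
The plan is to reduce everything to Proposition~\ref{prop:a-deg1} together with a symmetric argument for the dual arrangement, and to handle the bidegree $(1,1)$ piece by a direct computation. First, observe that $S[a]_{(\cdot,\cdot;1)}$ is the free $S$-module on the symbols $a_1,\ldots,a_n$ (each in $A$-degree $1$), so a syzygy is exactly an element $\sum_i c_i a_i$ with $c_i\in S$ and $\sum_i c_i f_i g_i = 0$. Thus $K$ is literally the module of syzygies of the sequence $(f_1g_1,\ldots,f_ng_n)$ inside $S$, shifted so that the generator $a_i$ sits in bidegree $(1,1)$ (its $f$-degree and $g$-degree are both $0$, but $f_ig_i$ has bidegree $(1,1)$, whence the $(-1,-1)$ shift). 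I would then take the bigraded piece in $R$-degree $p$ and $R^\perp$-degree $0$: such a syzygy has each $c_i\in R^\perp$ of degree $0$, i.e.\ $c_i\in\k$ is a scalar -- wait, more carefully, $K_{(\cdot,1)}$ should mean the part of $K$ living in $R^\perp$-degree (the second grading) equal to the degree of a single generator, but the cleanest route is: $K_{(\cdot,1)} = \bigoplus_p K_{(p,1)}$ where $K_{(p,1)}$ consists of $\sum_i c_i a_i$ with $c_i\in R_p$ (degree-$p$ elements of $R$, sitting in $R^\perp$-degree $0$) satisfying $\sum_i c_i f_i g_i = 0$. This is precisely the description \eqref{eq:syz} of $(I_{\X})_{(p;1)}$ appearing in the proof of Proposition~\ref{prop:a-deg1}, once one matches $a_i\mapsto f_ig_i$ with the map $R[a]\to S$. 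Hence $K_{(\cdot,1)} = (I_{\X})_{(\cdot;1)} \cong \der(\A)(-1)$ by that proposition, where the shift $(-1)$ tracks the degree of $a_i$; I would spell out that the isomorphism is the one constructed there, $u = \sum c_i a_i \leftrightarrow \theta = \sum_{i\in[r]} c_i x_i \partial/\partial x_i$ after choosing coordinates so $f_i = x_i$ for $i\le r$.

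The second bullet, $K_{(1,\cdot)}\cong\der(\A^\perp)(-1)$, follows by the symmetric argument: the construction is completely symmetric under swapping $(W,f,R)$ with $(W^\perp,g,R^\perp)$ and $\M$ with $\M^\perp$, since the generators $f_ig_i$ are symmetric in the two factors. Concretely, $K_{(1,p)}$ consists of $\sum_i d_i a_i$ with $d_i\in R^\perp_p$ and $\sum_i d_i f_i g_i = 0$; applying Proposition~\ref{prop:a-deg1} with the roles of $\A$ and $\A^\perp$ interchanged (and using that $\X(W)$, $I_{\X}$, etc.\ are defined symmetrically -- this is the virtue of the ideal of pairs emphasized in the introduction) gives $K_{(1,\cdot)}\cong\der(\A^\perp)(-1)$ as graded $R^\perp$-modules. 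I would state this as "by symmetry" but include a sentence noting exactly which substitution is being made, since a careful reader will want to see that Proposition~\ref{prop:a-deg1} is indeed invariant under duality.

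For the first bullet, I would compute $K_{(1,1)}$ directly. An element here is $\sum_i \alpha_i a_i$ with $\alpha_i\in\k$ and $\sum_i \alpha_i f_i g_i = 0$ as an element of $S = R\otimes_\k R^\perp$ (living in bidegree $(1,1)$). The key point is that the $\k$-linear relations among the products $\{f_ig_i\}\subseteq S_{(1,1)}$ are exactly spanned by the indicator vectors of connected components of $\M$. One direction is easy: if $C$ is a connected component, then on the complementary ground set the relevant coordinates of $W$ and of $W^\perp$ both vanish, so $\sum_{i\in C} f_ig_i$ equals the "full" relation -- more precisely one should argue $\sum_{i\in[n]} f_i g_i = \sum_{i\in C} f_i g_i$ coordinate-block by coordinate-block, and that $\sum_{i\in[n]} f_ig_i = 0$ because it is the image of the evaluation pairing $W\otimes W^\perp \to \k$ composed with... actually the cleanest statement: $\sum_i f_i g_i$ is the canonical pairing element, which vanishes since $W^\perp = (\k^{[n]}/W)^*$ annihilates $W$. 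The reverse inclusion -- that there are no other relations -- is the one real point to verify, and I expect it to be the main obstacle: it amounts to showing $\dim_\k K_{(1,1)} = |\Pi|$, equivalently that the $f_ig_i$ span a subspace of $S_{(1,1)}$ of dimension $n - |\Pi|$. I would prove this by decomposing along connected components: for a connected matroid, one shows the only relation is the obvious one, using that connectivity of $\M$ means $W$ (and $W^\perp$) does not decompose as a direct sum respecting any nontrivial partition of $[n]$, so that the bilinear expressions $f_ig_i$, $i\in C$, are linearly independent modulo the single relation $\sum_{i\in C} f_ig_i = 0$. For a general matroid, $S_{(1,1)}$ and the span both decompose as direct sums over components, reducing to the connected case. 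This connectivity computation -- really a statement that the syzygies in the lowest bidegree see exactly the block structure of the realization -- is where I would spend the most care; everything else is bookkeeping on top of Proposition~\ref{prop:a-deg1}.
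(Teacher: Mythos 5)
Your handling of the second and third bullets agrees with the paper's: both appeal to Proposition~\ref{prop:a-deg1} for $K_{(\cdot,1)}$, and to the evident $W\leftrightarrow W^\perp$ symmetry for $K_{(1,\cdot)}$. The bookkeeping about the shift and the identification of $K$ with the syzygy module of $(f_1g_1,\ldots,f_ng_n)$ is also fine, modulo a minor indexing slip (an element of $K_{(p,1)}$ has coefficients $c_i\in R_{p-1}$, not $R_p$, once the $(-1,-1)$ shift is taken into account --- though you do account for the shift correctly at the end).

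Your treatment of the first bullet diverges from the paper and contains a genuine gap. The paper's route is cleaner and, more to the point, already available to you once the second bullet is established: $K_{(1,1)}\cong\der(\A)_0$ under the isomorphism you built (with $a_i\mapsto x_i\partial/\partial x_i$), and it is a standard fact that $\der(\A)$ decomposes as a direct sum of the derivation modules of the connected components and that the degree-zero part of the derivation module of a connected arrangement is exactly $\k$ times the Euler derivation. Pushing this through the isomorphism immediately gives that $K_{(1,1)}$ is spanned by the vectors $\sum_{i\in C}a_i$. Instead, you attempt a direct computation of the linear relations among the $f_ig_i$, and the crux --- that for a \emph{connected} realization the only scalar relation $\sum_i c_if_ig_i=0$ is the ``full'' one --- is asserted rather than proved (``connectivity of $\M$ means $W$ does not decompose\ldots so that the $f_ig_i$ are linearly independent modulo the single relation''). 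That assertion is precisely equivalent to the classical statement $\dim_\k\der(\A)_0 = 1$ for connected $\A$, so you would be re-deriving the standard result rather than avoiding it, and you stop short of actually re-deriving it. You should either supply that argument in full (essentially: a degree-zero derivation with $\theta(f_i)=c_if_i$ forces $W$ to split along the level sets of $i\mapsto c_i$, contradicting connectivity unless all $c_i$ coincide) or, more economically, replace the direct computation with the observation that bullet one is the degree-$(1,1)$ specialization of bullet two together with the known description of $\der(\A)_0$.
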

\begin{proof}
  As $R$-modules, we have $(I_{\X})_{(\cdot;1)}\cong K_{(\cdot,1)}$ by
  construction. So the second claim follows by Proposition~\ref{prop:a-deg1},
  and the third claim by exchanging the roles of $R$ and $R^\perp$.

  To establish the first claim, we note that
  the only degree-zero derivations on a connected
  component $C$ are multiples of the Euler derivation, $\sum_{i\in C}x_i
  \partial/\partial x_i$.  Now $\der(\A)$ is a direct sum of logarithmic
  derivations on the connected components (see, e.g.,~\cite{OTbook}), and
  the isomorphism $K_{(1,1)}\cong\der(\A)_0$ sends $a_i$ to
  $x_i\partial/\partial x_i$.  
\end{proof}

\subsubsection{Minimal syzygies}
The claim about $K_{(\cdot,1)}$ in \Cref{thm:slices} can be reformulated as saying that we have exact sequences
\begin{equation}\label{eq:3-terms-old}
\begin{tikzcd}[column sep=small]
  0\ar[r] & \der(\A)(-1)\ar[r] & F\ar[r] & \fra_{(\cdot,1)} \ar[r]&  0,
\end{tikzcd}
\end{equation}
and
\begin{equation}\label{eq:4-terms-old}
\begin{tikzcd}[column sep=small]
  0\ar[r] & \der(\A)(-1)\ar[r] & F\ar[r] & E \ar[r] & (S/\fra)_{(\cdot,1)}\ar[r] & 0
\end{tikzcd}
\end{equation}
where $E=S_{(\cdot,1)}\cong R\otimes_{\k} (W^\perp)^*$ is a free $R$-module of rank $n-r$, and $F=S[a]_{(\cdot,0;1)}(-1)=R[a]_{(\cdot,1)}(-1)$ is a free $R$-module of rank $n$.

We can refine these exact sequences slightly using \Cref{thm:slices}. 
% --- BEGIN ARBITRARY CHAR --- %
Let $\theta_1,\ldots,\theta_\kappa$ be the Euler derivations from the proof of \Cref{thm:slices}. Define\footnote{In characteristic 0, this module is canonically isomorphic to the submodule\[\der_0(\A)\coloneqq \{\theta\in \der(\calA) : \theta(f_i)=0\text{ for all }i\}\] of $\der(\A)$.}
\begin{equation*}
    \derbar(\A) \coloneqq \frac{\der(\A)}{R\{\theta_1,\ldots,\theta_\kappa\}}.
\end{equation*}
When the characteristic of $\k$ is 0, the exact sequence
\begin{equation}\label{eq:der-der0}
    0 \to  R^\kappa \xrightarrow{[\theta_1\;\cdots \;\theta_k]} \der(\calA) \to \derbar(\calA) \to 0
\end{equation}
is canonically split (see, e.g., \cite[Rmk.~2.10(4)]{Wal17}).
% --- END ARBITRARY CHAR --- %
% % --- BEGIN CHAR ZERO --- %
% Let
% \begin{equation*}
%     \der_0(\calA) \coloneqq \{\theta\in \der(\calA) : \theta(f_i)=0\text{ for all }i\}.
% \end{equation*}
% When the characteristic of $\k$ is 0, we have\footnote{Add reference}
% \begin{equation}\label{eq:der-der0}
%     \der(\calA) = \der_0(\calA) \oplus R^\kappa,
% \end{equation}
% where $R^\kappa$ is a free module of rank equal to the number of components of $\M$---more precisely, $R^\kappa$ may be identified with the $R$-span of the Euler derivations from the proof of \Cref{thm:slices}. 
% % --- END CHAR ZERO --- %

Recall that a graded $R$-module $N$ is a \emph{minimal graded $k$th syzygy module} of the graded $R$-module $M$ if there exists an exact sequence
\[ \xi\colon N \xrightarrow{\partial_{k+1}} P_k \xrightarrow{\partial_k} \cdots \xrightarrow{\partial_1} P_0\]
such that
\begin{itemize}
    \item each $P_i$ is graded free;
    \item $\operatorname{coker}(\partial_1) \cong M$; and
    \item the differentials of $\k\otimes_R\xi$ are all zero, or equivalently, $\operatorname{im}(\partial_{i+1})\subseteq R_+P_i$ for each $0\leq i\leq k$, where $R_+$ is the homogeneous maximal ideal.
\end{itemize}

\begin{lemma}\label{ann-seqs}
    % --- BEGIN ARBITRARY CHAR --- % %
    \hphantom{NEWLINE}
    % --- END ARBITRARY CHAR --- % %
    % % --- BEGIN CHAR 0 --- % %
    % Assume that the characteristic of $\k$ is zero.
    % % --- END CHAR 0 --- % %
    \begin{enumerate}[label=\textnormal{(\alph*)}]
        \item\label{ann-seqs.free} The graded $R$-module $F/RK_{(1,1)}$ is graded free, where $F$ is the graded free $R$-module defined above, and $K$ is the module defined at the start of \S\ref{subsec:syz}.
        \item\label{ann-seqs.seqs} There are exact sequences
        \begin{equation}\label{eq:3-terms}
        \begin{tikzcd}[column sep=small]
          0\ar[r] & \derbar(\A)(-1)\ar[r] & F/RK_{(1,1)}\ar[r] & \fra_{(\cdot,1)} \ar[r]&  0,
        \end{tikzcd}
        \end{equation}
        and
        \begin{equation}\label{eq:4-terms}
        \begin{tikzcd}[column sep=small]
          0\ar[r] & \derbar(\A)(-1)\ar[r] & F/RK_{(1,1)}\ar[r] & E \ar[r] & (S/\fra)_{(\cdot,1)}\ar[r] & 0
        \end{tikzcd}
        \end{equation}
        where $E$ and $F$ are the graded free $R$-modules defined above. 
        
        \item\label{ann-seqs.syz} $\derbar(\A)(-1)$ is a minimal graded first and second syzygy module of $\fra_{(\cdot,1)}$ and $(S/\fra)_{(\cdot,1)}$, respectively.
    \end{enumerate}
\end{lemma}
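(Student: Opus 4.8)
The plan is to prove the three parts in order, deriving \ref{ann-seqs.seqs} and \ref{ann-seqs.syz} from \ref{ann-seqs.free} together with \Cref{thm:slices}. For \ref{ann-seqs.free}: the submodule $RK_{(1,1)}\subseteq F$ is generated by the images of the vectors $\sum_{i\in C}a_i$ over the connected components $C$ of $\M$, via the identification $F=R[a]_{(\cdot,1)}(-1)$ (so that $F$ is free on the $a_i$). These vectors have pairwise disjoint supports, and the coefficient of $a_i$ in the generator indexed by the component containing $i$ is $1\in R$, a unit. So choosing one distinguished index $i_C$ in each component $C$, the quotient $F/RK_{(1,1)}$ is freely generated by the images of $\{a_i : i\notin\{i_C : C\in\Pi\}\}$; concretely, one does a change of basis on $F$ replacing $a_{i_C}$ by $\sum_{i\in C}a_i$ for each $C$, and then $F/RK_{(1,1)}$ is the free $R$-module on the remaining basis vectors. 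I would spell this out as: $RK_{(1,1)}$ is a free direct summand of $F$ of rank $\kappa=|\Pi|$, hence the quotient is free of rank $n-\kappa$.

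For \ref{ann-seqs.seqs}: start from \eqref{eq:3-terms-old} and \eqref{eq:4-terms-old}, whose existence is \Cref{thm:slices} (the map $F\to\fra_{(\cdot,1)}$ sends $a_i\mapsto f_ig_i$, and $\der(\A)(-1)\cong K_{(\cdot,1)}$ is exactly its kernel). The first claim of \Cref{thm:slices} identifies $K_{(1,1)}$ — the degree-$(1,1)$ part of that kernel — with $R\{\theta_1,\dots,\theta_\kappa\}$ under the isomorphism $K_{(\cdot,1)}\cong\der(\A)(-1)$; hence $RK_{(1,1)}$ maps isomorphically onto $R\{\theta_1,\dots,\theta_\kappa\}(-1)$ inside $\der(\A)(-1)$. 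Quotienting the short exact sequence \eqref{eq:3-terms-old} by the subobject $RK_{(1,1)}\hookrightarrow \der(\A)(-1)$ on the left and its image $RK_{(1,1)}$ in $F$ in the middle (these agree, since the inclusion $\der(\A)(-1)\hookrightarrow F$ restricts to the inclusion $RK_{(1,1)}\hookrightarrow F$), the snake lemma gives the short exact sequence \eqref{eq:3-terms} with left term $\der(\A)(-1)/R\{\theta_1,\dots,\theta_\kappa\}(-1)=\derbar(\A)(-1)$ and middle term $F/RK_{(1,1)}$, and the cokernel $\fra_{(\cdot,1)}$ unchanged. For \eqref{eq:4-terms}, splice \eqref{eq:3-terms} with the inclusion $\fra_{(\cdot,1)}\hookrightarrow E=S_{(\cdot,1)}$, whose cokernel is $(S/\fra)_{(\cdot,1)}$; this is precisely the relation between \eqref{eq:3-terms-old} and \eqref{eq:4-terms-old} carried through the same quotient.

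For \ref{ann-seqs.syz}: by part \ref{ann-seqs.free}, both $F/RK_{(1,1)}$ and $E$ are graded free, so \eqref{eq:3-terms} exhibits $\derbar(\A)(-1)$ as a first syzygy module of $\fra_{(\cdot,1)}$ and \eqref{eq:4-terms} exhibits it as a second syzygy module of $(S/\fra)_{(\cdot,1)}$. What remains is minimality, i.e.\ that the map $\derbar(\A)(-1)\to F/RK_{(1,1)}$ has image in $R_+\cdot(F/RK_{(1,1)})$. I would argue this by a degree count: the generators of $\fra_{(\cdot,1)}$, namely the $f_ig_i$, all lie in $R$-degree $1$ — i.e.\ $\fra_{(\cdot,1)}$ is generated in degree $1$ and $F=R[a]_{(\cdot,1)}(-1)$ is a minimal free cover (its free generators sit in degree $1$, matching) — and then minimality of the syzygy inclusion is the statement that $\der(\A)$ has no degree-$0$ generators other than the Euler derivations $\theta_1,\dots,\theta_\kappa$, which is exactly what passing to $\derbar(\A)$ removes; equivalently, $\k\otimes_R\derbar(\A)$ has no component in degree $0$, so the entries of the matrix of $\derbar(\A)(-1)\to F/RK_{(1,1)}$ all lie in $R_+$. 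The same reasoning handles the $E$-step: $E$ is generated in degree $0$ and $F/RK_{(1,1)}$ in degree $1$, so that map is automatically minimal, and composing, the second-syzygy claim follows.

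The main obstacle is the bookkeeping in \ref{ann-seqs.syz}, not any of the constructions: one must be careful that ``minimal $k$th syzygy'' as defined in the excerpt requires the \emph{entire} tail $P_k\to\cdots\to P_0$ to have zero differentials after $\otimes_R\k$, so I need to check minimality of the free cover $F\to\fra_{(\cdot,1)}$ and (for the second-syzygy statement) of $E\to(S/\fra)_{(\cdot,1)}$ as well, not just of the syzygy inclusion itself. The free cover $F\twoheadrightarrow\fra_{(\cdot,1)}$ is minimal iff none of the $f_ig_i$ lies in $R_+\fra_{(\cdot,1)}$, which holds because $\fra_{(\cdot,1)}$ is concentrated in degrees $\geq 1$ and the $f_ig_i$ sit in degree exactly $1$; and $E\twoheadrightarrow(S/\fra)_{(\cdot,1)}$ is minimal because $E$ is generated in degree $0$ while $(S/\fra)_{(\cdot,1)}$ is a nonzero quotient (so the cover is an isomorphism in degree $0$, forcing minimality there too). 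Assembling these observations with the two short exact sequences gives both syzygy statements.
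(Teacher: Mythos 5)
Your core argument is correct and follows essentially the same route as the paper's: (a) is the observation that $RK_{(1,1)}$ is a free direct summand of $F$ because $K_{(1,1)}$ is a $\k$-subspace of the degree-one part $F_1$; (b) quotients \eqref{eq:3-terms-old} by $RK_{(1,1)}$, using \Cref{thm:slices} to match $RK_{(1,1)}$ with $R\{\theta_1,\dots,\theta_\kappa\}(-1)$; and (c) is the degree count placing the image of $\derbar(\A)(-1)$ in degrees $\geq 2$ and $\fra_{(\cdot,1)}$ in degrees $\geq 1$.

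One cleanup in your final paragraph. The free cover appearing in \eqref{eq:3-terms} is $F/RK_{(1,1)}\twoheadrightarrow\fra_{(\cdot,1)}$, \emph{not} $F\twoheadrightarrow\fra_{(\cdot,1)}$; the latter is in fact not minimal, since its kernel $\der(\A)(-1)$ contains the degree-one images of the Euler derivations, reflecting exactly the relations $\sum_{i\in C}f_ig_i=0$ that prompted the passage to $\derbar$. Also, your stated criterion ``minimal iff none of the $f_ig_i$ lies in $R_+\fra_{(\cdot,1)}$'' is not an iff, nor even an if: minimality of a graded free cover $\bigoplus_i Re_i\to M$, $e_i\mapsto m_i$, requires the $m_i$ to be \emph{linearly independent} in $M/R_+M$, not just nonzero there (compare $R^2\to R$, $(a,b)\mapsto a+b$). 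None of this damages the proof, because the extra check is redundant: minimality of the cover $F/RK_{(1,1)}\twoheadrightarrow\fra_{(\cdot,1)}$ is, by definition, the statement that its kernel $\derbar(\A)(-1)$ lands in $R_+\cdot(F/RK_{(1,1)})$, and you already established that by the degree count; likewise $\fra_{(\cdot,1)}\subseteq R_+E$ is what you verified for the $E$-step. So the correct move is simply to delete the claim about $F\twoheadrightarrow\fra_{(\cdot,1)}$ and note that the two inclusions established in the preceding paragraph are exactly the minimality conditions required.
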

\begin{proof}
    \ref{ann-seqs.free} By \Cref{thm:slices}, the vector space $K_{(1,1)}$ is spanned by the images of the Euler derivations $\theta_1,\ldots,\theta_\kappa$ corresponding to the connected components of $\M$. These images all live the vector space $F_1$ spanned by $a_1,\ldots,a_n$. Hence, $F/RK_{(1,1)} \cong R(-1)\otimes_\k (F_1/K_{(1,1)})$ is a graded free $R$-module.
    
    \ref{ann-seqs.seqs} By \Cref{thm:slices}, the vector space $K_{(1,1)}$ is in the kernel of the map $F\to \fra_{(\cdot,1)}$, and it is spanned by the images of the Euler derivations $\theta_1,\ldots,\theta_\kappa$ corresponding to the connected components of $\M$. Now use 
    % --- BEGIN ARBITRARY CHAR --- % %
    the definition of $\derbar(\A)$.
    % --- END ARBITRARY CHAR --- % %
    % % --- BEGIN CHAR 0 --- % %
    % \eqref{eq:der-der0}.
    % % --- END CHAR 0 --- % %
    
    \ref{ann-seqs.syz} 
    % --- BEGIN ARBITRARY CHAR --- % %
    Every degree zero element of $\der(\A)$ lives in the $R$-span of the Euler derivations. Therefore, the image of $\derbar(\A)(-1)$ in $F/RK_{(1,1)}$ lives in degrees at least 1, i.e.\ in $R_+\cdot(F/RK_{(1,1)})$, where $R_+$ is the homogeneous maximal ideal. 
    % --- END ARBITRARY CHAR --- % %
    % % --- BEGIN CHAR 0 --- % %
    % By definition, none of the degree-zero elements of $\der(\A)$ live in $\der_0(\A)$. Therefore, the image of $\der_0(\A)(-1)$ in $F/RF_1$ lives in degrees at least 1, i.e.\ in $R_+\cdot(F/RF_1)$, where $R_+$ is the homogeneous maximal ideal. 
    % % --- END CHAR 0 --- % %
    This shows that $\derbar(\A)$ is a minimal graded first syzygy module of $\fra_{(\cdot,1)}$. The claim about $(S/\fra)_{(\cdot,1)}$ follows once we observe that, by definition, the image  $\fra_{(\cdot,1)}$ of $F/RK_{(1,1)}\to E$ is contained in $R_+ E$.
\end{proof}

\begin{remark}\label{min-free}
    % --- BEGIN ARBITRARY CHAR --- % %
    If $A$ is free, 
    % --- END ARBITRARY CHAR --- % %
    % % --- BEGIN CHAR 0 --- % %
    % If $A$ is free and the characteristic of $\k$ is 0,
    % % --- END CHAR 0 --- % %
    then the proof of \Cref{ann-seqs}\ref{ann-seqs.syz} shows that \eqref{eq:3-terms} and \eqref{eq:4-terms} are minimal graded free resolutions of $\fra_{(\cdot,1)}$ and $(S/\fra)_{(\cdot,1)}$, respectively.
    % If $\A$ is free, then by \cite[Prop.~4.29]{OTbook}, a minimal homogeneous generating set of $\der_0(\A)$ has nobody of degree $0$. Therefore, if we express the map $\der_0(\A)(-1)\to F/RF_1$ as a matrix, all its entries will belong to the homogeneous maximal ideal $\frm$ of $R$, and therefore its image will belong to $\frm(F/RF_1)$. Thus, \eqref{eq:3-terms} is in fact a minimal graded free resolution of $\fra_{(\cdot,1)}$ (see \cite[p36]{BH93}). Moreover, since the image $\fra_{(\cdot,1)}$ of $F/RF_1\to E$ is contained in $\frm E$, the sequence \eqref{eq:4-terms} is a minimal graded free resolution of $(S/\fra)_{(\cdot,1)}$.
\end{remark}

\subsubsection{Tor of the module of derivations}
Recall that $W$ and $W^\perp$ are $r$- and $(n-r)$-dimensional, respectively.

\begin{corollary}\label{cor:tor_of_der}
  For all $p\geq1$ and $i\geq1$, we have
  \begin{align*}
    \Tor_{p+1}^S(\fra,\k)_{(i,1)}&\cong\Tor_p^R(\der(\A),\k)_{i-1}\quad
    \text{and}\\
  \Tor_{p+1}^S(\fra,\k)_{(1,i)}&\cong\Tor_p^{R^\perp}(\der(\A^\perp),\k)_{i-1}.
  \end{align*}
  Additionally, for all $i\geq 1$, we have
  \begin{align*}
      \dim_\k\Tor_1^S(\fra,\k)_{(i,1)} &=  \dim_\k (\der(\A)\otimes_R\k)_{i-1} -  \binom{i+r-2}{r-1}\kappa,\\
      \dim_\k\Tor_1^S(\fra,\k)_{(1,i)} &=  \dim_\k (\der(\A^\perp)\otimes_R\k)_{i-1} -  \binom{i+n-r-2}{n-r-1}\kappa,\\
  \end{align*}
  where $\kappa$ is the number of components of $\M$.
\end{corollary}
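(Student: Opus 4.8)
The plan is to deduce all four statements from the exact sequences in Lemma~\ref{ann-seqs}, reading off $\Tor$ via the long exact sequence of $\Tor_\bullet^S(-,\k)$ and then translating $S$-$\Tor$ into $R$-$\Tor$. The first step is a base-change observation: since $S=R\otimes_\k R^\perp$ is a polynomial extension of $R$, and the modules appearing in \eqref{eq:3-terms} are concentrated in $A$-degree (well, $R^\perp$-degree) $1$ on the relevant strand, the $S$-module $\fra_{(\cdot,1)}$ is really an $R$-module (namely $E/(\text{image})$, with $E$ free over $R$). Concretely, $\fra_{(\cdot,1)}\cong R\otimes_\k (W^\perp)^* / (\text{image of }F/RK_{(1,1)})$, so a minimal free resolution of $\fra_{(\cdot,1)}$ as an $S$-module is obtained from a minimal free $R$-resolution by tensoring with $R^\perp$; this is flat, so $\Tor^S_{p}(\fra_{(\cdot,1)},\k)=\Tor^R_p(\fra_{(\cdot,1)},\k)$ in the appropriate multidegrees. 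The same holds with $R^\perp$ in place of $R$ for the $(1,i)$ strand, by the symmetry of the construction under matroid duality.

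Next, I would invoke Lemma~\ref{ann-seqs}\ref{ann-seqs.syz}: $\derbar(\A)(-1)$ is a minimal graded \emph{second} syzygy of $(S/\fra)_{(\cdot,1)}$ and a minimal graded \emph{first} syzygy of $\fra_{(\cdot,1)}$, via the free modules $F/RK_{(1,1)}$ and $E$. Because the maps $\derbar(\A)(-1)\hookrightarrow F/RK_{(1,1)}$ and $F/RK_{(1,1)}\to E$ both have image in $R_+$ times the target (the minimality clause), splicing a minimal free resolution of $\derbar(\A)(-1)$ onto \eqref{eq:3-terms} produces a minimal free resolution of $\fra_{(\cdot,1)}$. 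Reading off, for $p\ge 1$,
\[
\Tor^S_{p+1}(\fra_{(\cdot,1)},\k)_{(i,1)}\cong\Tor^R_{p-1}\big(\derbar(\A)(-1),\k\big)_{(i,1)}\cong\Tor^R_{p-1}(\derbar(\A),\k)_{i-1}.
\]
Now the split exact sequence \eqref{eq:der-der0} (valid in characteristic $0$; but over $\C$, which is where the first two isomorphisms of the corollary are asserted, this is fine) gives $\der(\A)\cong\derbar(\A)\oplus R^\kappa$ as graded modules. Since $R^\kappa$ is free, $\Tor^R_p(\der(\A),\k)=\Tor^R_p(\derbar(\A),\k)$ for all $p\ge 1$, which upgrades the displayed formula to the stated $\Tor^S_{p+1}(\fra,\k)_{(i,1)}\cong\Tor^R_p(\der(\A),\k)_{i-1}$ — here I reindex so the corollary's ``$p$'' matches, i.e. the corollary's $p$ is my $p-1$. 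Wait: re-examining, the corollary claims $\Tor^S_{p+1}(\fra,\k)_{(i,1)}\cong\Tor^R_p(\der(\A),\k)_{i-1}$ for $p\ge1$; with \eqref{eq:3-terms} contributing one homological degree of shift this matches exactly: $\Tor^S_{p+1}(\fra_{(\cdot,1)})=\Tor^R_p(\derbar(\A)(-1))=\Tor^R_p(\der(\A))_{(\cdot)}(-1)$. The dual statement follows by swapping $R\leftrightarrow R^\perp$ and $\A\leftrightarrow\A^\perp$.

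For the dimension formulas, set $p=1$: I want $\dim_\k\Tor^S_1(\fra,\k)_{(i,1)}$, the number of minimal generators of $\fra$ in bidegree $(i,1)$. From the exact sequence \eqref{eq:3-terms}, the generators of $\fra_{(\cdot,1)}$ are the generators of $F/RK_{(1,1)}$ minus relations, but \eqref{eq:3-terms} is not necessarily a minimal presentation unless $A$ is free; instead use the full exact sequence of $\Tor$ applied to $0\to\derbar(\A)(-1)\to F/RK_{(1,1)}\to\fra_{(\cdot,1)}\to 0$:
\[
0\to\Tor^S_1(\fra_{(\cdot,1)},\k)\to\big(\derbar(\A)(-1)\otimes_R\k\big)\to\big(F/RK_{(1,1)}\otimes\k\big)\to\fra_{(\cdot,1)}\otimes\k\to 0,
\]
using minimality of $\derbar(\A)(-1)\hookrightarrow F/RK_{(1,1)}$ to kill the connecting map into $\Tor_0$ of the free module — actually minimality makes the map $\derbar(\A)(-1)\otimes\k\to F/RK_{(1,1)}\otimes\k$ zero, so $\Tor^S_1(\fra_{(\cdot,1)},\k)\cong\derbar(\A)(-1)\otimes_R\k$ and $\fra_{(\cdot,1)}\otimes\k\cong F/RK_{(1,1)}\otimes\k$. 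Hence in bidegree $(i,1)$: $\dim_\k\Tor^S_1(\fra,\k)_{(i,1)}=\dim_\k(\derbar(\A)\otimes_R\k)_{i-1}=\dim_\k(\der(\A)\otimes_R\k)_{i-1}-\dim_\k(R^\kappa\otimes_R\k)_{i-1}$, and $(R^\kappa\otimes_R\k)_{i-1}$ is $\kappa$ copies of the degree-$(i-1)$ piece of $R=\k[W]\cong\k[t_1,\dots,t_r]$, which has dimension $\binom{(i-1)+r-1}{r-1}=\binom{i+r-2}{r-1}$. This yields the first dimension formula; the second is the dual. The main obstacle I anticipate is bookkeeping the multigrading carefully — in particular making sure the ``flatness of $R^\perp$ over $\k$'' reduction identifies $S$-$\Tor$ in the correct single-strand multidegrees with honest $R$-$\Tor$, and getting the homological-degree shift from the two-step resolution \eqref{eq:3-terms}/\eqref{eq:4-terms} consistent with the indexing in the statement — but there is no substantive difficulty beyond that, since all the real content is already packaged in Lemma~\ref{ann-seqs} and the splitting \eqref{eq:der-der0}.
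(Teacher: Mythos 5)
Your overall strategy matches the paper's: use the short exact sequences from Lemma~\ref{ann-seqs} to compute $\Tor^R_\bullet(\fra_{(\cdot,1)},\k)$ and then translate back to $\Tor^S_\bullet(\fra,\k)$ in the $(\cdot,1)$-strand. However, the crucial translation step is where your argument has a real gap.

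You claim that since $\fra_{(\cdot,1)}$ is ``really an $R$-module,'' a minimal free $S$-resolution of it is obtained from a minimal free $R$-resolution by the flat base change $-\otimes_R R^\perp$, giving $\Tor^S_p(\fra_{(\cdot,1)},\k)=\Tor^R_p(\fra_{(\cdot,1)},\k)$. Two things go wrong here. First, the target of the corollary is $\Tor^S_p(\fra,\k)_{(i,1)}$, not $\Tor^S_p(\fra_{(\cdot,1)},\k)$ --- and $\fra_{(\cdot,1)}$ is not an $S$-module in any relevant sense (it is the $R^\perp$-degree-$1$ graded piece, an $R$-module). Second, the base change heuristic would only relate $\Tor^S$ of $\fra_{(\cdot,1)}\otimes_R S$ to $\Tor^R$ of $\fra_{(\cdot,1)}$, and $\fra_{(\cdot,1)}\otimes_R S$ is \emph{not} isomorphic to $\fra$: if it were, every Betti number of $\fra$ over $S$ would have $R^\perp$-degree $1$, contradicting the table~\eqref{eq:A3syz} for $\typeA_3$, whose nonzero entries occur in $R^\perp$-degrees $1$, $2$, and $3$. (Incidentally, you also wrote $\fra_{(\cdot,1)}\cong E/(\text{image of }F/RK_{(1,1)})$; in fact $\fra_{(\cdot,1)}$ is the \emph{image}, and the quotient is $(S/\fra)_{(\cdot,1)}$.) The argument the paper actually runs is different and avoids base change entirely: the slice functor $(-)_{(\cdot,1)}$ on bigraded $S$-modules is exact and sends graded free $S$-modules to graded free $R$-modules, so it carries a free $S$-resolution of $\fra$ to a free $R$-resolution of $\fra_{(\cdot,1)}$; combined with the observation that all the $S$-Betti numbers of $\fra$ lie in $R^\perp$-degree $\geq 1$ (so the slice of a minimal $S$-resolution is minimal over $R$), this gives $\Tor^S_p(\fra,\k)_{(i,1)}\cong\Tor^R_p(\fra_{(\cdot,1)},\k)_i$. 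Once that identification is in place, your use of the long exact sequence of $\Tor$ applied to~\eqref{eq:3-terms}, the minimality clause in Lemma~\ref{ann-seqs}\ref{ann-seqs.syz}, and the splitting~\eqref{eq:der-der0} is the same calculation the paper does, and is fine. (One small note on the last step: $(R^\kappa\otimes_R\k)_{i-1}$ is $\k^\kappa$ in degree $0$ and zero otherwise, not $\binom{i+r-2}{r-1}\kappa$; your write-up inherits this from the statement being proved, so it is not a defect of your approach per se, but it is worth flagging.)
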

\begin{proof}
    If $F$ is a graded free $S$-module, then $F_{(\cdot,1)}$ is a graded free $R$-module. Moreover, $(-)_{(\cdot,1)}$ is an exact functor. Thus, $(-)_{(\cdot,1)}$ takes graded free resolutions to graded free resolutions. Thus, the statement for $p\geq 1$ follows from \eqref{eq:3-terms-old}. The statement for $p=0$ follows from \eqref{eq:3-terms}, the definition of $\derbar(\A)$, and the formula for the Hilbert function of the polynomial rings $R$ and $R^\perp$.
\end{proof}
Thus the ideal of pairs ``sees'' in particular whether or not $\A$ or its
dual are free arrangements.
\begin{example}\label{ex:A3syz}
  Let $\A$ be the braid arrangement $\typeA_3$.  This is also the 
  rank-$3$ graphic arrangment defined by the complete graph
  $K_4$.  Here $n=6$, the arrangement $\A$ is free,
  and $\A^\perp$ is isomorphic to $\A$.
  The ideal $\fra$ has projective dimension $3$, and
  its bigraded Betti numbers are given by:
  \begin{equation}\label{eq:A3syz}
  \begin{array}{|lll}
    t & 5t^2 & t^3\\
    t & 7t & 5t^2 \\
    5 & t & t\\ \hline
  \end{array},
  \end{equation}
  where the $(i,j)$ entry is the polynomial
  \[
  \sum_{p\geq0}\dim_{\k}\big(
  \Tor^S_p(\fra,\k)_{(i,j)}\big)t^p.
  \]
  The root system $\typeA_3$ has
  coexponents $\set{1,2,3}$, so $\der(\A)\cong R\oplus R(-1)\oplus R(-2)$,
  and the two positive-degree generators of $\der(\A)$ are reflected in the
  bottom row of \eqref{eq:A3syz}.
\end{example}

\subsection{Minimal primes over \texorpdfstring{$\fra$}{the ideal of pairs}}\label{ss:minprimes}
The ideal $\fra$ defines a variety $V(\fra)$ in $\AA^n$ whose components
are all linear subspaces, by a result of Derksen and
Sidman~\cite[Prop.\ 4.5]{DS04}.  We describe these linear subspaces in \Cref{thm:min_primes}. We use this description in \Cref{cor:min_primes} to produce a combinatorial lower bound for $\pdim_S(S/\fra)$, and then again in \Cref{cor:min_primes2} to describe the minimal associated primes of $\fra_{(\cdot,1)}$.

\bigskip
Let $L(\M)$ denote the lattice of flats of $\M$ (ordered by inclusion).  It
is well known that $L(\M)$ is isomorphic to the lattice of subspaces cut out by
the hyperplane arrangement $\A$ (ordered by reverse inclusion).  

A flat
$F\in L(\M)$ is \emph{cyclic} if $F$ is a union of circuits or, equivalently,
if its complement $F^\complement\in L(\M^\perp)$.
Let $\cyc(\M)\subseteq L(\M)$ denote the subposet of cyclic flats.  Here is a
basic fact about cyclic flats.  (See, for example, \cite[Ex.\ 8.2.13]{Oxbook}.)
\begin{propdef}\label{propdef:cyclic_flats}
  For a flat $F\in L(\M)$, let
  \[Z(F)\coloneqq\bigcup_{C\subseteq F}C,\]
  where $C$  ranges over all circuits of $\M$.  Then $Z(F)$ is a cyclic flat, and
  $\cl_{\M^\perp}(F^\complement)=Z(F)^\complement$.
\end{propdef}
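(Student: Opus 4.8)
The plan is to deduce both assertions from the standard duality identity that expresses closure in $\M^\perp$ in terms of coloops after deletion. Recall that for any $X\subseteq[n]$ one has $\M^\perp/X=(\M\setminus X)^\perp$, and hence that an element $e\in[n]\setminus X$ lies in $\cl_{\M^\perp}(X)$ if and only if $e$ is a loop of $\M^\perp/X$, i.e.\ if and only if $e$ is a coloop of $\M\setminus X$ (see \cite{Oxbook}). I would apply this with $X=F^\complement$, noting that $\M\setminus F^\complement=\M|F$, to obtain
\[
\cl_{\M^\perp}(F^\complement)=F^\complement\cup\set{e\in F : e\text{ is a coloop of }\M|F}.
\]
Since an element of a matroid is a coloop exactly when it lies in no circuit, complementing in $[n]$ turns the right-hand side into $F\setminus\set{e\in F : e\text{ is a coloop of }\M|F}$, which is precisely the union of the circuits of $\M|F$, equivalently the union of the circuits of $\M$ contained in $F$, i.e.\ $Z(F)$. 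This gives $\cl_{\M^\perp}(F^\complement)^\complement=Z(F)$, which after complementing is the second assertion, and in particular exhibits $Z(F)^\complement$ as a flat of $\M^\perp$.

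It then remains only to check that $Z(F)$ is itself a flat of $\M$; this is the single place where the hypothesis that $F$ is a flat is used. Let $e\in[n]\setminus Z(F)$; I must show $e\notin\cl_\M(Z(F))$. If $e\notin F$, then since $Z(F)\subseteq F$ and $F=\cl_\M(F)$ we get $\cl_\M(Z(F))\subseteq\cl_\M(F)=F$, so $e\notin\cl_\M(Z(F))$. If instead $e\in F\setminus Z(F)$, then by construction $e$ lies in no circuit of $\M|F$, hence in no circuit of $\M|(Z(F)\cup\set{e})$, so $e$ is a coloop of the latter matroid; therefore $\rank(Z(F)\cup\set{e})=\rank(Z(F))+1$ and again $e\notin\cl_\M(Z(F))$. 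Thus $\cl_\M(Z(F))=Z(F)$, and since $Z(F)$ is by definition a union of circuits of $\M$, it is a cyclic flat.

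The main point to keep straight is really just bookkeeping rather than a genuine obstacle: ``cyclic flat'' bundles two requirements, being a flat and being a union of circuits, and the displayed duality identity hands us the second requirement (indeed the whole second assertion) essentially for free, leaving only the short rank computation above for the first. All other ingredients — the identities $\M^\perp/X=(\M\setminus X)^\perp$ and ``a loop of the dual is a coloop'', and the fact that adjoining a coloop raises the rank by one — are standard and may be cited from \cite{Oxbook}; alternatively, one can invoke \cite[Ex.\ 8.2.13]{Oxbook} directly. Degenerate cases, such as $F$ independent (so $Z(F)=\emptyset$ and $\M$ is necessarily loopless), need no separate treatment, as the arguments above apply verbatim.
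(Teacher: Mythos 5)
The paper offers no proof of this statement; it simply records it as a basic fact and cites \cite[Ex.\ 8.2.13]{Oxbook}, so there is no argument of the paper's to compare against. That said, your proof is correct and self-contained. The duality computation is right: for $e\notin X$ one has $e\in\cl_{\M^\perp}(X)$ iff $e$ is a loop of $\M^\perp/X=(\M\setminus X)^\perp$, i.e.\ a coloop of $\M\setminus X$; taking $X=F^\complement$ and using $\M\setminus F^\complement=\M|F$ gives $\cl_{\M^\perp}(F^\complement)^\complement=\{e\in F:e\text{ lies in some circuit of }\M|F\}=Z(F)$, which is the second assertion and also shows $Z(F)^\complement$ is a flat of $\M^\perp$. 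The rank argument for flatness of $Z(F)$ is also sound: the case $e\notin F$ follows from $\cl_\M(Z(F))\subseteq\cl_\M(F)=F$, and if $e\in F\setminus Z(F)$ then since $Z(F)\cup\set{e}\subseteq F$ and $e$ lies in no circuit of $\M$ contained in $F$, $e$ is a coloop of $\M|(Z(F)\cup\set{e})$, so $\rank_\M(Z(F)\cup\set{e})=\rank_\M(Z(F))+1$ and $e\notin\cl_\M(Z(F))$. Combined with the fact that $Z(F)$ is by construction a union of circuits, this shows $Z(F)$ is a cyclic flat. The only thing worth noting is that your proof of the flatness of $Z(F)$ and your proof of the displayed closure identity are logically independent, and either one, together with the equivalence stated in the paper's definition of cyclic flat, already gives the other; you could therefore shorten the argument by deriving one from the other, but giving both directly, as you do, is perfectly fine and perhaps clearer.
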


\begin{definition}
    We say a pair $(F,G)\in L(\M)\times L(\M^\perp)$ is
    a \emph{biflat} if 
    %they are not both equal to $[n]$, and
    $F\cup G=[n]$.\footnote{This disagrees slightly with the language of
    \cite{ADH20}, where in addition $F$ and $G$ must be nonempty and not both equal to $[n]$.}
    Let $L(\M,\M^\perp)$
    denote the subposet of $L(\M)\times L(\M^\perp)$ consisting of just the biflats.
\end{definition}

For any subsets $I,J\subseteq[n]$, we let
\[
\frp_{I,J}\coloneqq (f_i\colon i\in I)+(g_j\colon j\in J),
\]
a linear ideal of $S$, and let $L_{I,J}\coloneqq V(\frp_{I,J})$.
\begin{remark}\label{rmk:spans}
  We have $\frp_{I,J}=\frp_{\cl_\M(I),\cl_{\M^\perp}(J)}$, since a linear
  functional $f_j$ is in the span of $\set{f_i\colon i\in I}$ if and only if
  $j$ is in the (matroid) span of $I$.  Clearly
  \[
  \codim \frp_{I,J} =
  \rank_{\M}(I)+\rank_{\M^\perp}(J).\qedhere
  \]
\end{remark}

\begin{theorem}\label{thm:min_primes}
  Let $W$ be a realization of a matroid $\M$ and $\fra$ its ideal of pairs.
  \begin{enumerate}[label=\textnormal{(\alph*)}]
    \item\label{thm:min_1}
  Every associated prime of $S/\fra$ is of the form $\frp_{F,G}$ for some biflat $(F,G)$.
\item\label{thm:min_2}
  The minimal primes of $S/\fra$ are $\set{\frp_{F,F^\complement}
    \colon F\in \cyc(\M)}$. In particular,
    \[ V(\fra) = \bigcup_{F\in \cyc(\M)} L_{F,F^\complement}.\]
\item\label{thm:min_3}
  If $F\in\cyc(\M)$, the primary component of $\fra$ corresponding to
  $\frp_{F,F^\complement}$ is $\frp_{F,F^\complement}$ itself.
  \end{enumerate}
  
\end{theorem}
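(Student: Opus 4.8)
The plan is to prove all three parts of Theorem~\ref{thm:min_primes} together, using the fact (cited from Derksen--Sidman~\cite{DS04}) that $V(\fra)$ is a subspace arrangement, together with the product structure $S = R\otimes_\k R^\perp$ and the combinatorics of cyclic flats.

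First I would analyze $V(\fra)$ set-theoretically. A point of $\AA^n = W\times W^\perp$ is a pair $(p,q)$, and $(p,q)\in V(\fra)$ iff $f_i(p)g_i(q)=0$ for every $i$, i.e.\ for each $i$ we have $i\in \operatorname{supp}(p)^\complement$ or $i\in\operatorname{supp}(q)^\complement$, where $\operatorname{supp}(p)=\{i : f_i(p)\neq 0\}$. Writing $I=\operatorname{supp}(p)^\complement$ and $J=\operatorname{supp}(q)^\complement$, the condition is exactly $I\cup J=[n]$. Now $p\in W$ vanishing on the coordinates in $I$ means $p$ lies in the linear subspace $W\cap \k^{I^\complement}$; the set of supports of points of $W$ are the complements of flats of $\M$ (the flats are exactly the sets $\operatorname{cl}_\M$ of cocircuit complements), and similarly for $W^\perp$ and $\M^\perp$. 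Since any point with support complement contained in a flat $F$ already lies in the stratum for $F$, and since the Zariski closure of $\{p : \operatorname{supp}(p)^\complement = F\}$ is $V(\frp_{F,\emptyset})\cap W = L_{F,\emptyset}$, a standard stratification argument gives $V(\fra) = \bigcup_{(F,G)\ \text{biflat}} L_{F,G}$, and then one discards non-maximal components. The key combinatorial step is identifying the \emph{maximal} biflats $(F,G)$ with respect to reverse inclusion of the $L_{F,G}$ (equivalently, minimal codimension $\rank_\M(F)+\rank_{\M^\perp}(G)$ with $F\cup G=[n]$): I would show that for fixed $F\cup G = [n]$ one can shrink $G$ to $F^\complement$ and then the minimal such $F$ is forced to be cyclic via Proposition/Definition~\ref{propdef:cyclic_flats}, because $\operatorname{cl}_{\M^\perp}(F^\complement) = Z(F)^\complement$ shows $(Z(F), F^\complement)$ is still a biflat with $Z(F)\subseteq F$, so the minimal components have $F$ cyclic and $G = F^\complement$; distinctness and irredundancy of $\{\frp_{F,F^\complement} : F\in\cyc(\M)\}$ follow since $\codim \frp_{F,F^\complement} = \rank_\M(F) + \rank_{\M^\perp}(F^\complement) = \rank_\M(F) + (n - r) - \rank_\M(F^\complement\text{'s complement})\dots$ — more simply, because the $F$ are recoverable from $L_{F,F^\complement}$. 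This proves \ref{thm:min_1} (every associated prime, minimal or embedded, contains some $\fra$-component hence is $\supseteq \frp_{F,F^\complement}$, and one checks primeness forces it to be a coordinate-type linear prime $\frp_{F',G'}$ with $(F',G')$ a biflat by the subspace-arrangement structure) and \ref{thm:min_2}.

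For \ref{thm:min_3}, I would show the primary component of $\fra$ at a minimal prime $\frp\coloneqq\frp_{F,F^\complement}$ equals $\frp$ itself, i.e.\ that $\fra$ is generically reduced along $L_{F,F^\complement}$. Equivalently, localizing at $\frp$, I must show $\fra S_\frp = \frp S_\frp$. In the localization, among the generators $f_ig_i$: if $i\in F$ then $f_i\in\frp$ but $g_i$ may be a unit, while if $i\notin F$ then $i\in F^\complement$ so $g_i\in\frp$ but $f_i$ may be a unit. I would argue that the $f_i$ for $i$ a coloop-like index of $F$ (more precisely, choosing a basis of the flat data) are expressible, after localization, in terms of the $f_ig_i$ with $g_i$ a unit, and dually for the $g_j$; the cyclicity of $F$ is exactly what guarantees there are enough circuits inside $F$ and inside $F^\complement$ (in $\M^\perp$) to make every generator of $\frp$ recoverable — this is where the equivalence ``$F$ cyclic $\iff$ $F$ is a union of circuits and $F^\complement$ is cyclic in $\M^\perp$'' does the work. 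Concretely I would pick, for each $i\in F$, a circuit $C\subseteq F$ with $i\in C$; the corresponding linear dependence among $\{f_j : j\in C\}$ lets me solve for $f_i$ in terms of $\{f_j : j\in C\setminus i\}$, and iterating (or using that cyclic flats have no coloops in the restriction) shows $\frp_{F,\emptyset} S_\frp$ is generated by $\{f_ig_i : i\in F\}$ — hmm, this needs care since $g_i$ need not be a unit when $i\in F$. The cleaner route: use the splitting $L_{F,F^\complement} = (W\cap\k^{F^\complement})\times(W^\perp\cap\k^{F})$ and the fact that $\fra$ restricted to a formal/étale neighborhood decomposes according to which factor each $f_ig_i$ sees; I expect the honest argument to run via the associated graded or via showing the Jacobian of the generators has full rank at a generic point of $L_{F,F^\complement}$, and full rank there is again equivalent to cyclicity by a dimension count.

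\textbf{Main obstacle.} The combinatorial identification of the minimal components in part \ref{thm:min_2} is routine once Proposition/Definition~\ref{propdef:cyclic_flats} is in hand. The real difficulty is part \ref{thm:min_3}: showing the scheme $V(\fra)$ is \emph{reduced} at each minimal prime. Here the subtlety is that the generators $f_ig_i$ for $i\in F$ vanish to order one in two ``directions'' ($f_i$ and $g_i$) but only one of those directions is transverse to $L_{F,F^\complement}$ generically, and one must check the product does not create nilpotents — which comes down to verifying that the linear parts of the $f_ig_i$, at a generic point of the component, span the conormal space, and this spanning is precisely the content of $F$ being a union of circuits together with $F^\complement$ being a union of circuits in $\M^\perp$. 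I would expect to spend most of the proof setting up coordinates adapted to a maximal independent set inside $F$ and inside $F^\complement$ (as in the proof of Proposition~\ref{prop:a-deg1}) and then doing this transversality check.
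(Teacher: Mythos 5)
Your treatment of parts \ref{thm:min_1} and \ref{thm:min_2} follows essentially the same route as the paper: invoke Derksen--Sidman to get linearity of the (associated) primes, identify them with biflats, and then use Proposition/Definition~\ref{propdef:cyclic_flats} to shrink $(F,G)$ to $(Z(F),F^\complement)$. One small caveat: the full strength of \cite[Prop.~4.5]{DS04} --- that \emph{every} associated prime of $S/\fra$, embedded or not, is a linear coordinate-type ideal --- is what drives part~\ref{thm:min_1}; the set-theoretic subspace-arrangement decomposition of $V(\fra)$ by itself only controls the minimal primes, so your parenthetical ``primeness forces it to be $\frp_{F',G'}$ by the subspace-arrangement structure'' needs to lean on the algebraic statement in \cite{DS04}, not just the geometric one.

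The genuine gap is in part~\ref{thm:min_3}. You write that ``$g_i$ need not be a unit when $i\in F$'' and consequently propose circuit-chasing, Jacobian/transversality checks, and \'etale-local arguments to show generic reducedness. But this worry does not materialize. Because $F$ is a \emph{cyclic} flat, $F^\complement$ is a flat of $\M^\perp$; since $S=R\otimes_\k R^\perp$ and $\frp_{F,F^\complement}$ splits as a sum of the linear ideal $(f_i:i\in F)\subseteq R$ and the linear ideal $(g_j:j\in F^\complement)\subseteq R^\perp$, a linear form $g_i$ lies in $\frp_{F,F^\complement}$ if and only if $i\in\cl_{\M^\perp}(F^\complement)=F^\complement$. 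Hence for $i\in F$, $g_i\notin\frp_{F,F^\complement}$, i.e.\ $g_i$ \emph{is} a unit in the localization $S_{\frp}$; dually, for $i\in F^\complement$, $f_i$ is a unit. Each generator $f_ig_i$ therefore has exactly one factor a unit at $\frp$, so $\fra S_{\frp}=\frp S_{\frp}$ immediately, and the primary component is $\frp S_{\frp}\cap S=\frp$ since $\frp$ is prime. This is the paper's one-line argument; your proposed transversality/Jacobian approach is both much heavier and left in a sketchy state that does not clearly close. The conceptual point you were circling --- that cyclicity of $F$ is exactly what makes the component reduced --- is correct, but the mechanism is that cyclicity forces $F^\complement$ to be closed in $\M^\perp$, so that $\frp_{F,F^\complement}$ separates the $f$-variables from the $g$-variables cleanly, not a count of circuits or a rank computation on a Jacobian.
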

\begin{proof}
  \ref{thm:min_1} Let $P$ be an associated prime of $\fra$.  By
  \cite[Prop.\ 4.5]{DS04}, $P$ is necessarily a linear ideal of the form
  $\frp_{F,G}$ for some subsets $F,G\subseteq [n]$.  By the previous \namecref{rmk:spans},
  we may assume $F\in L(\M)$ and $G\in L(\M^\perp)$.  That $F\cup G=[n]$
  follows from the fact that $P$ is prime and contains $f_ig_i$ for each $i$.

  \ref{thm:min_2} Let $P$ be a minimal prime of $S/\fra$.  By
  \ref{thm:min_1}, we have $P=\frp_{F,G}$ for some biflat $(F,G)$.
  By Proposition~\ref{propdef:cyclic_flats}, we have $Z(F)\subseteq F$ and
  $\cl_{\M^\perp}(F^\complement)=Z(F)^\complement$.

  Since $F^\complement \subseteq G$, taking closures in $\M^\perp$ shows
  $Z(F)^\complement\subseteq G$ as well.  By minimality, then, $F=Z(F)$ and
  $G=F^{\complement}$.

  \ref{thm:min_3} Localizing $\fra$ at $\frp=\frp_{F,F^\complement}$ gives
  the ideal $\frp S_{\frp}$.  Now we note that $\frp S_{\frp}\cap S=\frp$.  
\end{proof}

\begin{remark}\label{rmk:always-min}
    If $\M$ has no loops or coloops, then $[n]$ is a cyclic flat. Therefore, the prime ideals $\frp_{[n],\emptyset}=\langle W^*\rangle$ and $\frp_{\emptyset,[n]}=\langle(W^\perp)^*\rangle$ are minimal primes of $S/\fra$ by \Cref{thm:min_primes}\ref{thm:min_2}.
\end{remark}

\begin{example}\label{ex:unif}
  If $\M$ is a uniform matroid, then the associated primes of $S/\fra$ are $\frp_{[n],\emptyset}$, $\frp_{\emptyset,[n]}$, and $\frp_{[n],[n]}$. This will be proven in \Cref{cor:unif}. 
\end{example}

\begin{example}[\Cref{ex:bracelet} continued]\label{ex:bracelet-fra}
  A computation using Macaulay2 \cite{M2} shows that $S/\fra$ has no embedded primes.
\end{example}

\begin{example}\label{ex:7-hyps}
    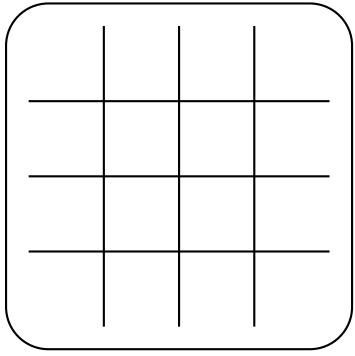
\begin{figure}
        \centering
        \begin{tikzpicture}
            \draw[thick,rounded corners=16pt] (-1.3,-1.3) rectangle (3.3,3.3);
            \draw[thick] (1,-1) -- (1,3);
            \draw[thick] (-1,1) -- (3,1);
            \draw[thick] (0,-1) -- (0,3); % 2
            \draw[thick] (-1,0) -- (3,0); % 3
            \draw[thick] (2,-1) -- (2,3);
            \draw[thick] (-1,2) -- (3,2);
        \end{tikzpicture}
        \caption{The projectivization of the arrangement from \Cref{ex:7-hyps} with the hyperplane $f_1=0$ placed at infinity.}
        \label{fig:7-hyps}
    \end{figure}
  Consider the arrangement $\A$ of 7 hyperplanes in $\C^3$ defined by the columns of the matrix
  \[ \begin{pmatrix}
    1&1&1&1&1&1&1\\
    0&1&0&2&0&3&0\\
    0&0&1&0&2&0&3
  \end{pmatrix}.\]
  This arrangement is pictured in \Cref{fig:7-hyps}. A computation using Macaulay2 \cite{M2} shows that the embedded primes correspond to the biflats $(1246,[7])$, $(1357,[7])$, and $([7],[7])$.
\end{example}

\begin{corollary}\label{cor:min_primes}
  For any arrangement $\A$, we have
\[
  \pdim_S(S/\fra) \geq \max\set{2\rank(F)-|F|+n-r\colon F\in \cyc(\M)}
\]
\end{corollary}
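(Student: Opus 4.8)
The plan is to bound the projective dimension from below using the codimension of $S/\fra$ localized at a suitably chosen minimal prime, combined with the Auslander--Buchsbaum formula. By Theorem~\ref{thm:min_primes}\ref{thm:min_2}, for each $F\in\cyc(\M)$ the ideal $\frp_{F,F^\complement}$ is a minimal prime of $S/\fra$, and by part \ref{thm:min_3} its primary component is $\frp_{F,F^\complement}$ itself, so the localization $(S/\fra)_{\frp_{F,F^\complement}}$ is a field (equivalently, $(S/\fra)$ is generically reduced along $L_{F,F^\complement}$). The depth of $S/\fra$ at this prime is therefore $0$, so Auslander--Buchsbaum (applied over the localization $S_{\frp_{F,F^\complement}}$, whose dimension is $\codim\frp_{F,F^\complement}=2\rank(F)+\rank_{\M^\perp}(F^\complement)$) gives
\[
\pdim_{S_{\frp_{F,F^\complement}}}(S/\fra)_{\frp_{F,F^\complement}} = \codim\frp_{F,F^\complement} - 0 = 2\rank(F)+\rank_{\M^\perp}(F^\complement).
\]
Since localization can only decrease projective dimension, $\pdim_S(S/\fra)\geq 2\rank(F)+\rank_{\M^\perp}(F^\complement)$ for every cyclic flat $F$.

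It remains to rewrite $\rank_{\M^\perp}(F^\complement)$ in the stated form. For a cyclic flat $F$ we have $F^\complement\in L(\M^\perp)$ by Proposition/Definition~\ref{propdef:cyclic_flats}, and the rank in the dual matroid satisfies $\rank_{\M^\perp}(F^\complement)=|F^\complement|-r+\rank_\M(F)=(n-|F|)-r+\rank(F)$ by the standard formula relating ranks in a matroid and its dual. Substituting, $2\rank(F)+\rank_{\M^\perp}(F^\complement)=2\rank(F)+(n-|F|)-r+\rank(F)$; wait --- let me recompute: $2\rank(F)+\bigl(n-|F|-r+\rank(F)\bigr)$. That is not quite the claimed bound either, so the correct combination is $\codim\frp_{F,F^\complement}=\rank_\M(F)+\rank_{\M^\perp}(F^\complement)$ (Remark~\ref{rmk:spans}), not $2\rank(F)+\cdots$; using $\rank_{\M^\perp}(F^\complement)=n-|F|-r+\rank(F)$ gives $\codim\frp_{F,F^\complement}=\rank(F)+(n-|F|-r+\rank(F))=2\rank(F)-|F|+n-r$, which is exactly the expression in the corollary. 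So $\pdim_S(S/\fra)\geq 2\rank(F)-|F|+n-r$ for each $F\in\cyc(\M)$, and taking the maximum over $\cyc(\M)$ yields the claim.

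The only subtlety --- and the step I would be most careful about --- is justifying $\pdim_{S_\frp}M_\frp=\depth S_\frp-\depth M_\frp$ cleanly: Auslander--Buchsbaum requires $M_\frp$ to be a finitely generated module over the local ring $S_\frp$, which it is, and it requires $\pdim$ finite, which holds since $S$ is regular. With $\depth(S/\fra)_\frp=0$ (from Theorem~\ref{thm:min_primes}\ref{thm:min_3}, as the localization is a field and in particular has depth $0$, or more carefully, $\frp$ is an associated prime of $S/\fra$ so $\depth(S/\fra)_\frp=0$), everything goes through. One should also note $\codim\frp_{F,F^\complement}=\dim S_\frp$ because $S$ is a polynomial ring over a field, hence catenary and equidimensional. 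I expect no real obstacle beyond bookkeeping with the matroid rank identities.
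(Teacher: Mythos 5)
Your argument is correct and follows the same route as the paper: localize at $\frp_{F,F^\complement}$, use Theorem~\ref{thm:min_primes}\ref{thm:min_3} to see the localization is the residue field of $S_{\frp_{F,F^\complement}}$, note that over the regular local ring $S_{\frp_{F,F^\complement}}$ the residue field has projective dimension equal to $\codim\frp_{F,F^\complement}$, and then use that localization does not increase projective dimension. (The paper implicitly invokes the Koszul/regular-local-ring fact where you invoke Auslander--Buchsbaum, but these are equivalent.) Aside from the false start in the middle where you momentarily miscopy $\codim\frp_{F,F^\complement}$ as $2\rank(F)+\rank_{\M^\perp}(F^\complement)$ before correcting yourself to $\rank_\M(F)+\rank_{\M^\perp}(F^\complement)$ as in Remark~\ref{rmk:spans}, the computation via $\rank_{\M^\perp}(F^\complement)=|F^\complement|-r+\rank_\M(F)$ is exactly what the paper does.
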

\begin{proof}
  Localization does not increase projective dimension, and the projective
  dimension of the residue field at $\frp_{F,F^\complement}$ equals the
  codimension, which by \cite[Prop.~2.1.9]{Oxbook} is
  \[
  \rank_{\M}(F)+\rank_{\M^\perp}(F^\complement) = 2\rank_{\M}(F)-|F|+n-r.\qedhere
  \]
\end{proof}
We note that $\pdim_S(S/\fra)\geq n-r$: since we assume the underlying matroid
has no loops, $F=\emptyset$ is a cyclic flat.  If, additionally, $\M$ has no
coloops, then $\pdim_S(S/\fra)\geq r$, since then $F=E\in\cyc(\M)$.
\begin{example}[Example~\ref{ex:A3syz} continued]
  The $\typeA_3$ arrangement has four proper cyclic flats given by the four
  triangles in the complete graph $K_4$.  They each have rank $2$, and so do
  their complements, so $\codim\frp_{F,F^\complement}=2+2$ for proper cyclic
  flats $F$, and $\codim\frp_{F,F^\complement}=3+0$ for $F=\emptyset$ and
  $F=E$.  So $\pdim S/\fra\geq4$, and our previous computation shows the bound
  is sharp in this case. It is also interesting to note that according to a computation using Macaulay2 \cite{M2}, $S/\fra$ has no embedded primes.
\end{example}

\begin{example}[\Cref{ex:7-hyps} continued]
  The cyclic flats are $\emptyset$,  $1246$, $1357$, and $[7]$, which have ranks $0$, $2$, $2$, and $3$, resp. Therefore,
  \[ \max\set{2\rank(F)-|F|+n-r\colon F\in \cyc(\M)} = \max\set{4,4,4,3} =4 \]
  However, a computation using Macaulay2 \cite{M2} shows that $\pdim_S(S/\fra)=7$.
\end{example}

For a subset $F\subseteq [n]$, let
\[P_F\coloneqq (f_i\colon i\in F),\]
an ideal of $R$.

\begin{corollary}\label{cor:min_primes2}
  Let $W$ be a matroid realization with no loops or coloops. 
    \begin{enumerate}[label=\textnormal{(\alph*)}]
    \item\label{cor:min_primes2_1} The associated primes of $(S/\fra)_{(\cdot,1)}$ are contained in 
      $\set{P_F \colon \emptyset \neq F\in L(\M)}$.
    \item\label{cor:min_primes2_2}
      The minimal primes of $(S/\fra)_{(\cdot,1)}$ are $\set{P_F\colon
      F\in \min(\cyc(\M)\setminus \{\emptyset\})}$.
    \end{enumerate}
\end{corollary}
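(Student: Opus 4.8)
The plan is to deduce this from Theorem~\ref{thm:min_primes} via the slicing results of \S\ref{subsec:ass}, since $(S/\fra)_{(\cdot,1)}$ is precisely the degree-$(\cdot,1)$ piece of the $S$-module $S/\fra$, and $R = S_{(\cdot,0)}$. First I would invoke Corollary~\ref{ass-restr}\ref{ass-restr.ass}: the associated primes of $(S/\fra)_{(\cdot,1)}$ are all of the form $\frp_{(\cdot,0)}$ for $\frp$ an associated prime of $S/\fra$. By Theorem~\ref{thm:min_primes}\ref{thm:min_1}, every such $\frp$ is $\frp_{F,G} = (f_i : i\in F) + (g_j : j\in G)$ for a biflat $(F,G)$, so I need to identify the contraction $(\frp_{F,G})_{(\cdot,0)}$, i.e.\ $\frp_{F,G}\cap R$. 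The key observation is that $S = R\otimes_\k R^\perp$ with the $g_j$'s a basis-compatible system of linear forms spanning the second factor, so that $\frp_{F,G}\cap R = (f_i : i\in F) = P_F$ whenever this makes sense — one must check that no new elements of $R$ are created by adding the $g_j$'s, which follows because $\frp_{F,G}$ is a monomial-like linear ideal in a split coordinate system: choosing coordinates so that $f_i = x_i$ for $i$ in a basis of $\M$ restricted appropriately and $g_j = x_j$ on the complementary block (as in the proof of Proposition~\ref{prop:a-deg1}), the ideal $\frp_{F,G}$ becomes a coordinate linear ideal and its contraction to $R$ is visibly $P_F$. This gives part \ref{cor:min_primes2_1}, after noting $F\neq\emptyset$ can be assumed: if $F=\emptyset$ then $\frp_{\emptyset,G}\cap R = (0)$, but $(0)$ is not associated to $(S/\fra)_{(\cdot,1)}$ because $(S/\fra)_{(\cdot,1)}$ is a torsion $R$-module (its rank is less than that of $E = S_{(\cdot,1)}$ by the exact sequence \eqref{eq:4-terms}, whose kernel term $\derbar(\A)(-1)$ is nonzero as $\M$ has no coloops) — alternatively, one checks directly that $G\supseteq F^\complement = [n]$ forces $G = [n]$, and by Corollary~\ref{ass-restr}\ref{ass-restr.ass} one only keeps those $N_i$ with $(N_i)_{(\cdot,1)}\neq M_{(\cdot,1)}$, which excludes the component where the whole degree-$1$ slice is swallowed.

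For part \ref{cor:min_primes2_2}, I would apply Corollary~\ref{ass-restr}\ref{ass-restr.min}: the minimal primes of $(S/\fra)_{(\cdot,1)}$ are the minimal elements of $\set{(\frp_i)_{(\cdot,0)} : (N_i)_{(\cdot,1)}\neq M_{(\cdot,1)}}$ as $\frp_i$ ranges over minimal primes of $S/\fra$ — but here I need to be careful, because Corollary~\ref{ass-restr}\ref{ass-restr.min} in the stated form uses a primary decomposition running over \emph{all} associated primes, and the minimal primes of $(S/\fra)_{(\cdot,1)}$ turn out to be minimal among the contractions of the \emph{minimal} primes of $S/\fra$ together with the embedded ones; the embedded primes $\frp_{F,G}$ with $G\supsetneq F^\complement$ contract to $P_F$ with $F$ a non-cyclic flat possibly, so I must argue these never give new minimal primes. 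The clean way: by Theorem~\ref{thm:min_primes}\ref{thm:min_2} the minimal primes of $S/\fra$ are exactly $\set{\frp_{F,F^\complement} : F\in\cyc(\M)}$, which contract to $\set{P_F : F\in\cyc(\M)}$; excluding $F=\emptyset$ (handled as above — for $F=\emptyset$ the corresponding primary component $N$ has $N_{(\cdot,1)} = M_{(\cdot,1)}$, since the $\frp_{\emptyset,[n]}$-primary component of $\fra$ is $\frp_{\emptyset,[n]} = (g_1,\ldots,g_n)$ by Theorem~\ref{thm:min_primes}\ref{thm:min_3}, and $(S/(g_1,\ldots,g_n))_{(\cdot,1)} = R\otimes(\text{degree-1 part killed}) $ — actually $R_{(\cdot,1)}\otimes\k$, need to verify this equals all of $E$... more precisely $(g_1,\dots,g_n)$ contains all of $S_{(\cdot,1)} = E$ since the $g_j$ span $(W^\perp)^*$, so indeed $N_{(\cdot,1)} = M_{(\cdot,1)}$ there), we get that the minimal primes of $(S/\fra)_{(\cdot,1)}$ are contained in $\set{P_F : F\in\cyc(\M)\setminus\{\emptyset\}}$, and among these the minimal ones correspond to $F\in\min(\cyc(\M)\setminus\{\emptyset\})$ because $F\subseteq F'$ iff $P_F\subseteq P_{F'}$ (this containment of linear ideals reflects the flat order, as in Remark~\ref{rmk:spans}). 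Conversely each $P_F$ for $F\in\min(\cyc(\M)\setminus\{\emptyset\})$ genuinely \emph{is} minimal over $(S/\fra)_{(\cdot,1)}$ because its corresponding primary component $\frp_{F,F^\complement}$ survives the slicing — I'd verify $(\frp_{F,F^\complement})_{(\cdot,1)}\neq M_{(\cdot,1)}$, i.e.\ $(S/\frp_{F,F^\complement})_{(\cdot,1)}\neq 0$, which holds since $F\neq[n]$ (as $\emptyset\neq F$ is minimal cyclic and $[n]$ is not minimal when $\M$ is connected-ish; even if $F=[n]$ were the only cyclic flat besides $\emptyset$, then $\frp_{[n],\emptyset}\cap R = P_{[n]} = R_+$... hmm, this edge case with $[n]$ minimal needs the hypothesis that there's something smaller, but the "no loops, no coloops" hypothesis ensures $[n]\in\cyc(\M)$ yet doesn't prevent $[n]$ from being minimal in $\cyc(\M)\setminus\{\emptyset\}$ when $\M$ is connected with no proper cyclic flats — in that case $P_{[n]} = R_+$ and indeed $(S/\fra)_{(\cdot,1)}$ is $R_+$-primary, consistent with the uniform-matroid Example~\ref{ex:unif}).

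The main obstacle I anticipate is the bookkeeping around which primary components are "killed" by the slicing functor $(-)_{(\cdot,1)}$ — specifically, showing that the $\frp_{\emptyset,[n]}$-component (always present by Remark~\ref{rmk:always-min}) has $(N)_{(\cdot,1)} = M_{(\cdot,1)}$ so that it drops out, while every other minimal component $\frp_{F,F^\complement}$ with $\emptyset\neq F$ survives. This requires knowing the primary components concretely; fortunately Theorem~\ref{thm:min_primes}\ref{thm:min_3} tells us the minimal primary components \emph{are} the primes $\frp_{F,F^\complement}$ themselves, which reduces the question to the elementary computation of $(S/\frp_{F,F^\complement})_{(\cdot,1)}$ in split coordinates: it is zero iff the linear forms involved already span $S_{(\cdot,1)} = (W^\perp)^*$, i.e.\ iff $F^\complement$ spans $\M^\perp$, i.e.\ iff $\cl_{\M^\perp}(F^\complement) = [n]$, i.e.\ (by Proposition/Definition~\ref{propdef:cyclic_flats}) iff $Z(F)^\complement = [n]$, iff $Z(F) = \emptyset$, iff $F = \emptyset$ (using $F$ cyclic so $F = Z(F)$). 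So exactly the $F=\emptyset$ component dies, and everything lines up. The embedded components require no separate argument for \ref{cor:min_primes2_2} since they cannot be minimal among a set already containing all the (contracted) minimal-component primes' downward closure — but they must still be accounted for in \ref{cor:min_primes2_1}, where the bound "$\subseteq \set{P_F : \emptyset\neq F\in L(\M)}$" is stated for \emph{all} flats, not just cyclic ones, which is exactly what the contraction computation delivers without needing to know cyclicity of the embedded $F$'s.
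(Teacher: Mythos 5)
Your overall strategy is the same as the paper's (which gives no details beyond citing \Cref{thm:min_primes} and \Cref{ass-restr}), and your treatment of part \ref{cor:min_primes2_1} is correct: the contraction computation $(\frp_{F,G})_{(\cdot,0)}=P_F$ is right, and the exclusion of $F=\emptyset$ via the torsion argument (or equivalently, by noting that $\frp_{\emptyset,[n]}=(g_1,\ldots,g_n)$ contains $S_{(\cdot,1)}$, so that component is killed) works. You also correctly compute, using \Cref{thm:min_primes}\ref{thm:min_3}, that the minimal-prime components $\frp_{F,F^\complement}$ survive slicing precisely when $F\neq\emptyset$.

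For part \ref{cor:min_primes2_2}, however, there is a genuine gap, and you see the shadow of it yourself (``I must argue these never give new minimal primes'') but do not close it. \Cref{ass-restr}\ref{ass-restr.min} says the minimal primes of the slice are the minimal elements of $\Sigma=\set{P_F:\frp_{F,G}\in\mathrm{Ass}(S/\fra),\ (Q_{F,G})_{(\cdot,1)}\neq S_{(\cdot,1)}}$, where $Q_{F,G}$ is the corresponding primary component, and $\Sigma$ a priori contains contractions of \emph{embedded} primes, whose primary components are not given by \Cref{thm:min_primes}\ref{thm:min_3}. Your argument that every such $P_F$ dominates some $P_{F'}$ with $F'$ a nonempty cyclic flat works only if $\frp_{F,G}$ contains a minimal prime $\frp_{F',(F')^\complement}$ with $F'\neq\emptyset$. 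But if $F$ is a nonempty \emph{independent} flat (so the only cyclic flat $\subseteq F$ is $\emptyset$, forcing $G=[n]$), then the unique minimal prime below $\frp_{F,[n]}$ is $\frp_{\emptyset,[n]}$, whose component dies under slicing; so nothing in $\Sigma$ need lie below $P_F$, and $P_F$ could spuriously become minimal. Your closing remark about ``downward closure'' asserts this cannot happen, but gives no reason.

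The clean way to plug the hole is a direct support computation, independent of the primary decomposition. Write $(S/\fra)_{(\cdot,1)}=E/\sum_i Rf_ig_i$ with $E=R\otimes_\k(W^\perp)^*$. Localizing at $\frp=P_F$ and applying Nakayama: the generators $f_ig_i$ with $i\notin F$ reduce modulo $P_F$ to nonzero multiples of $g_i$, while those with $i\in F$ reduce to zero, so
\[
\bigl((S/\fra)_{(\cdot,1)}\bigr)_{P_F}=0 \iff \set{g_i: i\notin F}\text{ spans }(W^\perp)^* \iff \cl_{\M^\perp}(F^\complement)=[n] \iff Z(F)=\emptyset,
\]
the last step by \Cref{propdef:cyclic_flats}. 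Since $Z(F)=\emptyset$ exactly when $F$ is independent, any $P_F$ with $F$ independent lies outside the support, hence is not a minimal (nor indeed any associated) prime of $(S/\fra)_{(\cdot,1)}$; this rules out the contamination. Moreover this same computation directly identifies the minimal primes as $P_F$ for $F$ minimal with $Z(F)\neq\emptyset$, and a short argument (if $Z(F)\neq\emptyset$ then $Z(F)$ is a nonempty cyclic flat inside $F$ with $Z(Z(F))=Z(F)$) shows these are exactly the minimal nonempty cyclic flats, recovering \ref{cor:min_primes2_2} in full.
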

\begin{proof}
    This follows immediately from \Cref{thm:min_primes}\ref{thm:min_1}, \ref{thm:min_2}, and \ref{thm:min_3} together with \Cref{ass-restr}.
\end{proof}

\begin{example}
  If $\M$ is a uniform matroid, then its only cyclic flats are $\emptyset$ and $[n]$. Therefore, by \Cref{cor:min_primes2}\ref{cor:min_primes2_2}, the only associated prime of $(S/\fra)_{(\cdot,1)}$ is $P_{[n]}$. This can also be proven using \Cref{lem:unif} below.
\end{example}

\begin{example}[\Cref{ex:bracelet-fra} continued]
  A computation using Macaulay2 \cite{M2} shows that neither $(S/\fra)_{(\cdot,1)}$ nor $(S/\fra)_{(1,\cdot)}$ have any embedded primes.
\end{example}

\begin{example}[\Cref{ex:7-hyps} continued]
  A computation using Macaulay2 \cite{M2} shows that, although $S/\fra$ has embedded primes, $(S/\fra)_{(\cdot,1)}$ does not. On the other hand,  $(S/\fra)_{(1,\cdot)}$ has does have an embedded prime, and this embedded prime is at the origin.
\end{example}

\begin{question}\label{ques:emb?}\hphantom{NEWLINE}
    \begin{enumerate}[label=\textnormal{(\alph*)}]
    \item What are the embedded primes of $(S/\fra)_{(\cdot,1)}$?  %Are
     % they all of the form $P_F$ for a flat $F$?
    \item A priori, the flats $F$ for which $P_F$ is an embedded prime of $(S/\fra)_{(\cdot,1)}$ depend on the choice of realization. Are they, in fact, determined by the matroid $\M$?
    \end{enumerate}
\end{question}

\subsection{The uniform matroid}\label{subsec:uniform}
In \Cref{cor:unif} below, we prove the claim from \Cref{ex:unif} about the associated primes of $\fra$ when $\M$ is uniform. In order to do so, we will need the following \namecref{lem:unif}:

\begin{lemma}\label{lem:unif}
    If $\M=U_{r,n}$, then 
    \[g_{i_1}\cdots g_{i_r}f_a\in \fra\]
    for all $i_1,\ldots,i_r,a\in [n]$. In other words, $\frp_{\emptyset, [n]}^r W^*\subseteq \fra$.
\end{lemma}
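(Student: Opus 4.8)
The plan is to prove directly that $g_{i_1}\cdots g_{i_r}f_a\in\fra$ for every choice of indices; since $W^\ast$ is the $\k$-span of $f_1,\ldots,f_n$, these products generate the ideal $\frp_{\emptyset,[n]}^r W^\ast$, so the two formulations are equivalent and it suffices to treat one such element at a time. First I would dispose of the degenerate cases: if $r=0$ then every $f_a=0$, and if $r=n$ then every $g_i=0$, so the element vanishes; hence assume $0<r<n$. Likewise, if $a\in\{i_1,\ldots,i_r\}$ then we may factor out the generator $f_ag_a$ of $\fra$ and are done, so I would also assume $a\notin T$, where $T\coloneqq\{i_1,\ldots,i_r\}$ is the underlying set of indices.

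The argument then proceeds by induction on the \emph{excess} $d\coloneqq r-|T|\geq 0$. For the base case $d=0$, the set $T$ has $r$ elements, hence is a basis of $\M=U_{r,n}$, so $\{f_i:i\in T\}$ is a $\k$-basis of $W^\ast$ and we may write $f_a=\sum_{i\in T}\lambda_if_i$ with $\lambda_i\in\k$. Since each index of $T$ occurs exactly once in the product, $g_{i_1}\cdots g_{i_r}=\prod_{j\in T}g_j$, and
\[
g_{i_1}\cdots g_{i_r}f_a=\Big(\prod_{j\in T}g_j\Big)\sum_{i\in T}\lambda_if_i=\sum_{i\in T}\lambda_i\Big(\prod_{j\in T\setminus\{i\}}g_j\Big)(f_ig_i)\in\fra.
\]

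For the inductive step, suppose $d\geq 1$. Then some index $b$ occurs with multiplicity at least $2$, so $g_{i_1}\cdots g_{i_r}=g_b\cdot m$ where $m$ is the product of the remaining $r-1$ of the $g$'s; note $g_b\mid m$, so $m$ is supported on $T$. Here is the key point: because $[n]\setminus T$ contains $a$ and has at least $n-r$ elements, I can choose a circuit $Q$ of $\M^\perp$ — that is, an $(n-r+1)$-element subset, since $\M^\perp=U_{n-r,n}$ — with $b\in Q$, $a\in Q$, and $Q\cap T=\{b\}$. The realization $W^\perp$ then supplies a linear relation $\sum_{j\in Q}\alpha_jg_j=0$ with every $\alpha_j\neq 0$; solving for $g_b$ and substituting into $g_b\cdot m\cdot f_a$ expresses $g_{i_1}\cdots g_{i_r}f_a$ as a $\k$-linear combination of the elements $g_jmf_a$ for $j\in Q\setminus\{b\}$. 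The term with $j=a$ equals $m\cdot(f_ag_a)\in\fra$. For each remaining $j$ we have $j\notin T$ and $j\neq a$, so $g_jm$ is a product of $r$ of the $g$'s with support $T\cup\{j\}$ of size $|T|+1$, hence of excess $d-1$, and still $a\notin T\cup\{j\}$; thus $g_jmf_a\in\fra$ by the inductive hypothesis. Therefore $g_{i_1}\cdots g_{i_r}f_a\in\fra$, completing the induction.

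The only place where I expect to need care is the choice of the circuit $Q$: it must contain both the repeated index $b$ and the index $a$ — so that substitution produces, respectively, terms of strictly smaller excess and a term lying in $\fra$ outright — while meeting $T$ only in $\{b\}$, so that the support genuinely grows. The remaining ingredients are standard matroid facts: every $r$-subset is a basis of $U_{r,n}$, every $(n-r+1)$-subset is a circuit of $U_{n-r,n}$, and a circuit of a realized matroid carries a linear dependence of full support.
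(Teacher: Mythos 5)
Your proof is correct and follows essentially the same strategy as the paper's: both induct on the excess $d = r - |\{i_1,\ldots,i_r\}|$ (which the paper calls $\nu$), with an identical base case (a repetition-free index tuple is a basis, so $f_a$ lies in its span) and the same inductive mechanism of eliminating a repeated $g_b$ by expanding it over indices outside the current support. The one substantive difference is that you insist on $a\notin\{i_1,\ldots,i_r\}$ throughout, which forces you to pick a circuit $Q$ of $\M^\perp$ \emph{containing} $a$ and then peel off the $j=a$ term by hand. The paper instead lets $a$ be arbitrary: its inductive hypothesis applies to every tuple of smaller $\nu$, so it can replace $g_b$ by a combination over any basis of $\M^\perp$ supported on $[n]\setminus\{i_1,\ldots,i_r\}$ and invoke the hypothesis on each resulting term uniformly. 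Dropping the constraint on $a$ removes your initial case split and the special $j=a$ term; the case $a\in\{i_1,\ldots,i_r\}$ is then subsumed by the base case, since $f_a$ is a combination of the basis elements whether or not $a$ is among them. Both versions are valid, yours just slightly more elaborate.
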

\begin{proof}
    We proceed by induction on the number
    \[ \nu=\nu(i_1,\ldots,i_r) \coloneqq  \sum_{j\in [n]} \nu_j(i_1,\ldots,i_r),\]
    where
    \[\nu_j=\nu_j(i_1,\ldots,i_r) \coloneqq \begin{cases}
        \lvert\{k : i_k = j\}\rvert - 1,   &\text{if }j\in \{i_1,\ldots,i_r\},\\
        0,  &\text{otherwise.}
    \end{cases}\]
    
    If $\nu=0$, then all the $i_j$'s are distinct, and therefore by uniformity $f_a$ is in the $\k$-span of $f_{i_1},\ldots,f_{i_r}$, i.e.\ $f_a=c_1f_{i_1}+\cdots + c_rf_{i_r}$ for some $c_k\in \k$. So,
    $g_{i_1}\cdots g_{i_r}f_a\in \fra$.
    
    Suppose $\nu>0$. After re-indexing, we may assume that $i_1$ appears at least twice, i.e.\ $\nu_{i_1}>0$. Let $j_1,\ldots,j_{n-r}$ be distinct elements of $[n]\setminus \{i_1,\ldots,i_r\}$. By uniformity, $\{j_1,\ldots,j_{n-r}\}$ is a basis of $\M^\perp$, and therefore $g_{i_1}$ is in the $\k$-span of $g_{j_1},\ldots,g_{j_{n-r}}$. So,
    \[g_{i_1}\cdots g_{i_r}f_a \in \sum_{k=1}^{n-r} \k g_{j_k}g_{i_2}\cdots g_{i_r}f_a.\]
    By construction, $j_k$ appears exactly once in $j_k,i_2,\ldots,i_r$, and $i_1$ appears one fewer time than in $i_1,\ldots,i_r$. So, $\nu_{j_k}(j_k,i_2,\ldots,i_r)=0$, $\nu_{i_1}(j_k,i_2,\ldots,i_r)=\nu_{i_1}(i_1,\ldots,i_r)-1$, and no other $\nu_\ell$ is affected. Thus, 
    \[ \nu(j_k,i_2,\ldots,i_r) =\nu(i_1,i_2,\ldots,i_r)-1.\]
    Hence, by the induction hypothesis, $g_{j_k}g_{i_2}\cdots g_{i_r}f_a\in \fra$ for each $k$. Hence, $g_{i_1}\cdots g_{i_r}f_a\in \fra$.
\end{proof}

\begin{remark}
    In the proof of \Cref{lem:unif}, we showed in particular that if $i_1,\ldots,i_r\in [n]$ are distinct, then $g_{i_1}\cdots g_{i_r}f_a\in\fra$ for every $a\in [n]$. The same argument proves a more general statement: If $W$ is a realization of a matroid $\M$ and $B$ is a basis of $\M$, then for all $a\in [n]$, 
    \[ \left(\prod_{i\in B} g_i\right)f_a \in \fra.\]
    However, whether we can get something generalizing the full statement of \Cref{lem:unif} seems to be more subtle.
\end{remark}

\begin{corollary}\label{cor:unif}
    If $\M=U_{r,n}$, then $\frp_{[n],\emptyset}$, $\frp_{\emptyset,[n]}$ and $\frp_{[n],[n]}$ are the only associated primes of $S/\fra$.
\end{corollary}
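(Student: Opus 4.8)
The plan is to combine the two facts we already have about $\fra$ when $\M = U_{r,n}$: the description of the minimal primes from \Cref{thm:min_primes}\ref{thm:min_2}, and the containment $\frp_{\emptyset,[n]}^r W^* \subseteq \fra$ from \Cref{lem:unif}. Since $\M$ and $\M^\perp = U_{n-r,n}$ are both uniform, the only cyclic flats of $\M$ are $\emptyset$ and $[n]$, so \Cref{thm:min_primes}\ref{thm:min_2} tells us that the minimal primes of $S/\fra$ are exactly $\frp_{\emptyset,[n]} = \langle W^* \rangle$ and $\frp_{[n],\emptyset} = \langle (W^\perp)^* \rangle$. Thus it remains to show that the only embedded prime that can occur is $\frp_{[n],[n]}$, the homogeneous maximal ideal $S_+$ (and, separately, one should exhibit that $\frp_{[n],[n]}$ genuinely is associated, to match the claim in \Cref{ex:unif}; but for the corollary as literally stated, it suffices to show no \emph{other} embedded prime appears).

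First I would invoke \Cref{thm:min_primes}\ref{thm:min_1}: every associated prime of $S/\fra$ has the form $\frp_{F,G}$ for a biflat $(F,G) \in L(\M,\M^\perp)$. Since $\M$ is uniform, the flats of $\M$ are $\emptyset$, all singletons up through $(r-1)$-subsets, and $[n]$; similarly for $\M^\perp$. I want to rule out every such $\frp_{F,G}$ except the three named ones. The key leverage is \Cref{lem:unif}: $\frp_{\emptyset,[n]}^r W^* \subseteq \fra$, and by the dual statement (exchanging the roles of $W$ and $W^\perp$, i.e.\ applying \Cref{lem:unif} to $\M^\perp = U_{n-r,n}$) also $\frp_{[n],\emptyset}^{\,n-r} (W^\perp)^* \subseteq \fra$. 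Adding these, $\fra$ contains $\frp_{\emptyset,[n]}^r \cdot W^* + \frp_{[n],\emptyset}^{\,n-r}\cdot (W^\perp)^*$; in particular, $\fra \supseteq W^* \cdot (W^\perp)^* = S_+^{\,2} \cap (\text{cross terms})$ — more precisely $\fra$ contains every product $g_{i_1}\cdots g_{i_r} f_a$ and every product $f_{i_1}\cdots f_{i_{n-r}} g_b$. The upshot is that $\sqrt{\fra} \subseteq \frp_{\emptyset,[n]} \cap \frp_{[n],\emptyset}$ (which we already knew from the minimal-primes computation), but more importantly that $\fra$ agrees with $\frp_{\emptyset,[n]} \cap \frp_{[n],\emptyset}$ away from $S_+$: localizing at any prime $\frq$ not containing $S_+$, at least one $f_i$ or one $g_j$ is a unit, and a short case analysis using the above products shows $\fra S_\frq = (\frp_{\emptyset,[n]} \cap \frp_{[n],\emptyset})S_\frq$, which is a radical ideal locally. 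Hence every embedded prime of $S/\fra$ must contain $S_+$, i.e.\ equals $\frp_{[n],[n]}$.

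Concretely, the case analysis I expect to carry out is: suppose $\frp_{F,G}$ is an associated prime with $(F,G)$ a biflat, $F \ne \emptyset$ or $G \ne [n]$, and $F \ne [n]$ or $G \ne \emptyset$, and $\frp_{F,G} \ne S_+$. Then $\frp_{F,G}$ is not contained in $S_+ = \frp_{[n],[n]}$ properly in the sense that $F \subsetneq [n]$ and $G \subsetneq [n]$. Pick $i \notin F$ and $j \notin G$. If we can pick $i = j$ (i.e.\ $F \cup G \ne [n]$), that contradicts the biflat condition; so $F \cup G = [n]$ forces, for the chosen $i \notin F$, that $i \in G$, and for $j \notin G$, that $j \in F$. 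Now localize at $\frp_{F,G}$: since $i \notin F$, $f_i$ is a unit in $S_{\frp_{F,G}}$; since $F \cup G = [n]$ and $F \ne [n]$, pick $i_0 \notin F$, so all such $f_{i_0}$ are units. Using uniformity, choose $r$ distinct indices outside $F$... — wait, $|F^\complement|$ may be less than $r$. Here is where I must be careful, and I expect this to be \textbf{the main obstacle}: the clean containment from \Cref{lem:unif} only directly kills primes $\frp_{F,G}$ where $F^\complement$ (resp.\ $G^\complement$) is large enough to contain a basis of $\M$ (resp.\ $\M^\perp$). For the remaining biflats — those where $F$ has rank $r$ but $F \ne [n]$, so $F^\complement$ is small — I will instead argue directly that $\frp_{F,G}$ cannot be associated because $\fra$ localized there already contains a nonzerodivisor times the relevant linear forms; equivalently, use that $S/\fra$ is $\frp_{F,F^\complement}$-primary component description from \Cref{thm:min_primes}\ref{thm:min_3} is only available at minimal primes. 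The cleanest route is probably: show $\fra_{\frp_{F,G}} = (\frp_{\emptyset,[n]}\cap \frp_{[n],\emptyset})_{\frp_{F,G}}$ for every $\frp_{F,G} \ne S_+$ by checking that in $S_{\frp_{F,G}}$ every generator $f_k g_k$ of the right side lies in $\fra$: if $k \notin F$ then $f_k$ is a unit so $g_k \in \fra_{\frp_{F,G}}$ reduces the claim; by biflatness $k \in F$ or $k\in G$, so the product $f_k g_k$ together with enough unit $f$'s or $g$'s generates what we need, using the basis-product remark following \Cref{lem:unif}. I would write this out as a localization argument, conclude radicality of $\fra$ away from $S_+$, and hence that $\Ass(S/\fra) \subseteq \{\frp_{[n],\emptyset}, \frp_{\emptyset,[n]}, \frp_{[n],[n]}\}$, completing the proof.
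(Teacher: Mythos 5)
Your localization strategy is genuinely different from the paper's proof, which instead takes a homogeneous element $h$ with $\operatorname{ann}_S(h) = \frp$ and case-splits on the bidegree of $h$: if $h$ has positive degree in both the $R$- and $R^\perp$-directions then $h$ lies in both $\frp_{[n],\emptyset}$ and $\frp_{\emptyset,[n]}$, so Lemma~\ref{lem:unif} and its dual force $\frp_{\emptyset,[n]}^r h = \frp_{[n],\emptyset}^{n-r} h = 0$ in $S/\fra$, whence $\frp \supseteq S_+$; and if $h$ has degree $0$ in one direction then $\fra$ vanishes in that degree range, which forces $\frp$ to be one of the two minimal primes. That argument is shorter because it never has to localize.

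Your version has a flagged gap that you do not close, but it turns out not to be a real obstruction. You worry that Lemma~\ref{lem:unif} ``only directly kills primes $\frp_{F,G}$ where $F^\complement$ (resp.\ $G^\complement$) is large enough to contain a basis,'' and you then retreat to the remark following Lemma~\ref{lem:unif}, which only gives products $\bigl(\prod_{i\in B} g_i\bigr) f_a$ for $B$ a basis. But Lemma~\ref{lem:unif} is stated for \emph{arbitrary} tuples $i_1,\ldots,i_r$ in $[n]$, repetitions allowed --- indeed the whole point of its inductive proof (on the repetition count $\nu$) is to handle repeats. Taking $i_1=\cdots=i_r=j$ gives $g_j^{\,r}f_a\in\fra$ for every $j,a\in[n]$. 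So once you localize at a graded prime $\frq\neq S_+$, some $f_i$ or some $g_j$ is a unit; say $g_j$. Then $g_j^{\,r}f_a\in\fra$ yields $f_a\in\fra_\frq$ for all $a$, so $\fra_\frq=(\frp_{[n],\emptyset})_\frq$, a prime ideal. Dually, if some $f_i$ is a unit, $\fra_\frq=(\frp_{\emptyset,[n]})_\frq$. In either case $\fra_\frq$ is prime, hence radical, so $(S/\fra)_\frq$ has no embedded primes; therefore any embedded prime of $S/\fra$ contains $S_+$ and so equals $\frp_{[n],[n]}$. (One small slip in your write-up: the generators of $\frp_{[n],\emptyset}\cap\frp_{\emptyset,[n]}$ are all products $f_ig_j$, not just the diagonal $f_kg_k$; but since the localization actually lands on a single linear prime rather than the intersection, this does not affect the argument.)
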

\begin{proof}
    The only cyclic flats of $\M$ are $[n]$ and $\emptyset$, so by \Cref{thm:min_primes}\ref{thm:min_2}, the only minimal primes of $S/\fra$ are $\frp_{[n],\emptyset}$ and $\frp_{\emptyset,[n]}$. It therefore remains to show that $\frm=\frp_{[n],[n]}$ is the only other associated prime. %To accomplish this, it suffices to prove that 
    
    Let $\frp=\frp_{F,G}$ be an associated prime of $S/\fra$, and let $h\in S/\fra$ be homogeneous with $\operatorname{ann}_S(h)=\frp$. Let $u=\deg(h)$. If $u\in (1,1)+\NN^2$, then by \Cref{lem:unif}, $\frp_{[n],\emptyset}^{n-r} h=0$ and $\frp_{\emptyset, [n]}^{r} h=0$, which means that $\frp_{[n],\emptyset}$ and $\frp_{\emptyset,[n]}$ are both contained in $\frp$. Hence, $\frm$ is contained in and therefore equal to $\frp$. 
    
    Next, suppose $u\in \NN\times\{0\}$. Since $\fra_{(\cdot,0)}=0$, $h$ is not killed by any $f_i$. Therefore, $F=\emptyset$. Hence, by \Cref{thm:min_primes}\ref{thm:min_2}, $\frp=\frp_{\emptyset, [n]}$. Similarly, if $u\in \{0\}\times \NN$, then  $\frp=\frp_{[n], \emptyset}$.
\end{proof}

\subsection{Arrangements with non-isomorphic modules of derivations}
\label{subsec:noniso}
At first blush, there appears to be no particular reason to expect---or even hope!---that two different arrangements in the same ambient space will have isomorphic modules of derivations, and indeed they don't in general, as was shown by Ziegler in \cite[Example~8.7]{Zie89}. But then one encounters the class of free arrangements, all of which have (by definition) isomorphic modules of derivations. Maybe, one hopes, this phenomenon generalizes, at least within isomorphism classes of arrangements! Alas, as with so many properties of free arrangements, as soon as we move far enough away from freeness, we lose: In \Cref{recipe}, we give a recipe for producing combinatorially equivalent arrangements with non-isomorphic modules of derivations. Note that this recipe does not produce Ziegler's example \cite[Example~8.7]{Zie89}---in fact, as we explain in \Cref{Ziegler-fail}, this recipe is only useful when the rank is at least 4.

\begin{theorem}\label{recipe}
    Let $f,f'\colon W\hookrightarrow \k^n$ be realizations of a loopless matroid $\M$, and let $\calA$ and $\calA'$ be the induced arrangements in $W$. If there exists a minimal non-empty cyclic flat $F\in \cyc(\M)$ of rank at least 3 such that
    \[ \{w\in W : f_i(w)=0 \text{ for all } i\in F\} \neq \{w\in W : f'_i(w)=0 \text{ for all } i\in F\} ,\]
    then $\der(\calA)$ is not isomorphic to $\der(\calA')$.
\end{theorem}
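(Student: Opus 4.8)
The plan is to use Corollary~\ref{cor:min_primes2}, which tells us that the minimal primes of $(S/\fra)_{(\cdot,1)}$ are exactly the ideals $P_F$ for $F$ a minimal nonempty cyclic flat, and that the primary component at each such $P_F$ is read off from the corresponding primary component of $\fra$, which by Theorem~\ref{thm:min_primes}\ref{thm:min_3} is $\frp_{F,F^\complement}$ itself. Crucially, this means the minimal-prime structure of $(S/\fra)_{(\cdot,1)}$ near $P_F$ is ``clean'': the localization $(S/\fra)_{(\cdot,1)}$ at $P_F$ is reduced, and geometrically the component $V(P_F)$ is the linear subspace $\{w\in W : f_i(w)=0\text{ for all }i\in F\}$. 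So the isomorphism class of the $R$-module $(S/\fra)_{(\cdot,1)}$ remembers this linear subspace as one of its minimal-primary components.

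First I would fix the minimal nonempty cyclic flat $F$ of rank $\geq 3$ supplied by the hypothesis, and write $L_F := \{w\in W : f_i(w)=0,\ i\in F\}$ and $L'_F$ for the analogous subspace coming from $f'$; these are the vanishing loci of $P_F$ (resp.\ the analogous ideal $P'_F$ for $\calA'$) inside $\spec R = W$. Next I would invoke Lemma~\ref{ann-seqs}\ref{ann-seqs.syz}: $\derbar(\A)(-1)$ is a minimal graded first syzygy module of $\fra_{(\cdot,1)}$, hence also the isomorphism class of the $R$-module $\fra_{(\cdot,1)}$ (equivalently $(S/\fra)_{(\cdot,1)}$) is determined, up to adding free summands, by the isomorphism class of $\derbar(\A)$; and via the canonically split sequence \eqref{eq:der-der0}, $\derbar(\A)$ determines and is determined by $\der(\A)$ up to the free summand $R^\kappa$. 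Therefore, if $\der(\A)\cong\der(\A')$ as $R$-modules, then $(S/\fra)_{(\cdot,1)}\cong(S/\fra')_{(\cdot,1)}$ as $R$-modules (after possibly matching up free summands, which does not change minimal primes or their primary components). But then the sets of minimal primes, together with the localizations at those primes, must coincide; in particular $\{P_F : F\in\min(\cyc(\M)\setminus\{\emptyset\})\}$ and $\{P'_F : F\in\min(\cyc(\M)\setminus\{\emptyset\})\}$ must define the same subvariety of $W$, component by component. Since $\M$ is the same matroid for both, the index set of minimal nonempty cyclic flats is the same; and the component attached to our chosen $F$ is $L_F$ on one side and $L'_F$ on the other, which are assumed distinct—a contradiction.

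The one genuine subtlety—and the step I expect to be the main obstacle—is making precise the claim that ``$\der(\A)\cong\der(\A')$ implies the two modules $(S/\fra)_{(\cdot,1)}$ and $(S/\fra')_{(\cdot,1)}$ have the same minimal primes with the same primary components.'' The implication from $\der$ to $\fra_{(\cdot,1)}$ goes through the minimal syzygy characterization, which determines $\fra_{(\cdot,1)}$ only up to free summands; I must check that: (i) an abstract $R$-module isomorphism of the syzygy modules lifts to an isomorphism of the presented modules up to free summands (this is the standard fact that minimal graded free resolutions are unique, so a module is determined up to free summands by any of its minimal syzygies—cf.\ the setup before Lemma~\ref{ann-seqs}); and (ii) adding a free summand to $(S/\fra)_{(\cdot,1)}$ changes neither its set of minimal primes (a free module has support all of $\spec R$, but $(S/\fra)_{(\cdot,1)}$ is a torsion module since $\fra_{(\cdot,0)}=0$ forces no $f_i$ to be a zerodivisor—wait, one must instead argue at the level of $\fra_{(\cdot,1)}$ as an ideal-like module and use that its minimal primes are computed from $\derbar$, or simply note that $\fra_{(\cdot,1)}$ determines $\derbar(\A)$ directly as \emph{its} minimal first syzygy, so the equivalence is genuinely two-way). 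Cleanest is to argue the round trip: $\der(\A)\leftrightarrow\derbar(\A)\leftrightarrow\fra_{(\cdot,1)}$, each step an equivalence up to explicitly controlled free summands, and then observe that $\frp_{F,F^\complement}\cap R = P_F$ together with Theorem~\ref{thm:min_primes}\ref{thm:min_3} and Corollary~\ref{cor:min_primes2} pin down the reduced component $V(P_F)=L_F$ intrinsically from $(S/\fra)_{(\cdot,1)}$. The rank-$\geq 3$ hypothesis enters precisely to guarantee $F$ is not a coloop-type or rank-$2$ flat where $\der$ could fail to see the geometry (as discussed in Remark~\ref{Ziegler-fail}); concretely it ensures $\codim_W L_F = \rank_\M(F)\geq 3$, so $L_F$ is a genuine positive-codimension component that survives in the module structure rather than being absorbed into the ambient space or a free summand.
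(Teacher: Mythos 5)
Your overall strategy — use Corollary~\ref{cor:min_primes2} to pin down $P_F$ as a minimal prime of $(S/\fra)_{(\cdot,1)}$ and observe that the corresponding linear subspace $L_F$ changes between the two realizations — is on the right track, and you correctly identify the needed bridge: some isomorphism-invariant of $\der(\A)$ must remember the associated prime $P_F$. But the bridge you build does not hold up. The central claim, that ``the isomorphism class of the $R$-module $\fra_{(\cdot,1)}$ is determined, up to adding free summands, by the isomorphism class of $\derbar(\A)$,'' is false, and you do flag it as the main obstacle before ultimately trying to wave it away with ``the equivalence is genuinely two-way.'' Being a minimal syzygy is a one-way relationship: the module $M$ determines its minimal syzygies up to isomorphism, but a syzygy module does not determine $M$. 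Concretely, the short exact sequence \eqref{eq:3-terms} determines $\fra_{(\cdot,1)}$ only once you know the \emph{map} $\derbar(\A)(-1)\hookrightarrow F/RK_{(1,1)}$, not merely the abstract isomorphism class of its source. Two arrangements with isomorphic modules of derivations could a priori have different embeddings and hence different cokernels, so you cannot conclude $(S/\fra)_{(\cdot,1)}\cong(S/\fra')_{(\cdot,1)}$. Without that conclusion, the contradiction at the end of your argument does not follow.

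The paper's proof sidesteps this by never trying to reconstruct $(S/\fra)_{(\cdot,1)}$ from $\der(\A)$. Instead, it extracts an isomorphism invariant \emph{of $\der(\A)$ itself}: since \eqref{eq:3-terms-old} exhibits $\der(\A)(-1)$ as a second syzygy of $(S/\fra)_{(\cdot,1)}$, the standard dimension shift gives $\Ext_R^{\rank_\M(F)-2}(\der(\A),R)\cong \Ext_R^{\rank_\M(F)}\bigl((S/\fra)_{(\cdot,1)},R\bigr)$ as soon as $\rank_\M(F)\geq 3$. (This shift is legitimate precisely because the free resolution only goes rightward.) Then \cite[Th.~1.1(1)]{EHV92} says that a codimension-$c$ associated prime of a module $M$ is an associated prime of $\Ext_R^c(M,R)$, and these Ext modules are genuine isomorphism invariants of $\der(\A)$. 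So $P_F\in\operatorname{Ass}\Ext_R^{\rank F-2}(\der(\A),R)$ holds or fails depending only on the isomorphism class of $\der(\A)$, and by Corollary~\ref{cor:min_primes2}\ref{cor:min_primes2_2} it holds for $\A$ and fails for $\A'$. To repair your argument you would need to replace ``recover the module'' with ``recover an invariant that sees the associated prime,'' which is exactly what the Ext-plus-EHV step does; as written, there is a real gap.
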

\begin{proof}
    By \Cref{cor:min_primes2}\ref{cor:min_primes2_2}, the ideal 
    \[ P=(f_i : i\in F)\]
    is an associated prime of $(S/\fra)_{(\cdot,1)}$, and by assumption it is not an associated prime of $(S'/\fra')_{(\cdot,1)}$. Therefore, applying \cite[Th.~1.1(1)]{EHV92}, we find that $P$ is an associated prime of $\Ext^{\rank_\M(F)}_R((S/\fra)_{(\cdot,1)},R)$ but not of $\Ext^{\rank_\M(F)}_R((S'/\fra')_{(\cdot,1)},R)$. By \eqref{eq:3-terms-old}, $\der(\calA)$ is a second syzygy of $(S/\fra)_{(\cdot,1)}$. Therefore, since $\rank_\M(F)\geq 3$ by assumption,
    \[ \Ext^{\rank_\M(F)-2}_R(\der(\calA), R) \cong \Ext^{\rank_\M(F)}_R((S/\fra)_{(\cdot,1)},R).\]
    The same statement is true for $\calA'$. Thus, $P$ is an associated prime of $\Ext^{\rank_\M(F)-2}_R(\der(\calA), R)$ but not of $\Ext^{\rank_\M(F)-2}_R(\der(\calA'), R)$. Hence, $\der(\calA)$ is not isomorphic to $\der(\calA')$.
\end{proof}

\begin{remark}\label{Ziegler-fail}
    If the rank of $\M$ is less than or equal to 3, then the only cyclic flat of rank at least $3$ is $[n]$. But the flat of the arrangement cut out by $[n]$ is the origin irrespective of what realization of $\M$ we choose.
\end{remark}

\begin{example}\label{ex:fail}
    Consider the arrangement $\A$ in $\k^4$ given by the columns of the matrix
    \[A = \begin{pmatrix}
        0  &1  &0  &0  &1\\
        0  &0  &1  &0  &1\\
        0  &0  &0  &1  &0\\
        1  &1  &1  &1  &1
    \end{pmatrix}.\]
    The matroid of this arrangement is $\M = U_{1,\{4\}}\oplus U_{3,\{1,2,3,5\}}$. The only non-empty cyclic flat of $\M$ is $1235$, which has rank 3. Choose a $P\in \operatorname{GL}(\k^4)$ which does not fix $V(f_1,f_2,f_3,f_5)$. Then the columns of the matrix $PA$ give an arrangement $\A'$ combinatorially equivalent to $\A$, and the flats of $\A$ and $\A'$ corresponding to the flat $1235$ of $\M$ are distinct. Therefore, by \Cref{recipe}, $\der(\A)$ is not isomorphic to $\der(\A')$.
\end{example}

\begin{remark}
    \Cref{ex:fail} generalizes: Suppose $\M$ has a non-empty cyclic flat $F\neq [n]$ of rank at least 3, and suppose $\A$ is given by the columns of a matrix $A$. If there exists a $P\in \operatorname{GL}(W)$ which does not fix the flat of $\A$ corresponding to $F$ (e.g.\ if $\k$ is infinite), then the arrangement $\A'$ given by the columns of $PA$ has matroid $\M$, and by \Cref{recipe}, $\der(\A)\ncong \der(\A')$.
\end{remark}

\subsection{Applications to free arrangements}\label{subsec:free}
Various conditions on hyperplane arrangements have long been known which
are necessary for $\der(\A)$ to be free.  In this section, we note that
the support of the ideal of pairs (Section~\ref{ss:minprimes}) gives a
geometric explanation of a result of Kung and Schenck~\cite{KS06} which,
in turn, generalized a result of Ziegler~\cite{Zie89}.

\begin{theorem}[Cor.\ 2.3, \cite{KS06}]\label{th:pdim-leqc}
    Let $\A$ be a non-empty arrangement. Then
    \[ \pdim_R\der(\A) \geq \max \bigl\{\rank_\M F - 2 : F\in \min\bigl(\cyc(\M)\setminus \{\emptyset\}\bigr)\bigr\}.\]
\end{theorem}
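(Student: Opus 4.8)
The plan is to deduce Theorem~\ref{th:pdim-leqc} from the syzygy description of $\der(\A)$ established earlier, essentially by pushing the local cohomology / projective dimension estimates through the functor $(-)_{(\cdot,1)}$.

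First I would observe that it suffices to prove, for each minimal non-empty cyclic flat $F\in\min(\cyc(\M)\setminus\{\emptyset\})$, that $\pdim_R\der(\A)\geq\rank_\M(F)-2$. Fix such an $F$, and set $c\coloneqq\rank_\M(F)$ and $P\coloneqq P_F=(f_i\colon i\in F)$, an ideal of $R$ of codimension $c$ (by Remark~\ref{rmk:spans} together with the fact that $f_i$, $i\in F$, span a $c$-dimensional space in $W^*$). By Corollary~\ref{cor:min_primes2}\ref{cor:min_primes2_2}, $P$ is a (minimal, hence associated) prime of the $R$-module $M\coloneqq(S/\fra)_{(\cdot,1)}$. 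The strategy is then: (i) use that $P$ is associated to $M$ to get a lower bound on $\pdim_{R_P}M_P$, and (ii) transfer this, via the four-term exact sequence \eqref{eq:3-terms-old} exhibiting $\der(\A)(-1)$ as a second syzygy of $M$, to a lower bound on $\pdim_R\der(\A)$.

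For step (i): localize at $P$. Since $P$ is an associated prime of $M$, the local ring $R_P$ has depth-$0$ quotient $M_P$ containing $R_P/PR_P=\k(P)$ as a submodule; in particular $\mathrm{depth}_{R_P}M_P=0$. Because $R$ is Cohen--Macaulay (a polynomial ring) and $R_P$ is a regular local ring of dimension $c$, the Auslander--Buchsbaum formula gives $\pdim_{R_P}M_P=\mathrm{depth}\,R_P-\mathrm{depth}_{R_P}M_P=c-0=c$. (One must check $M_P$ is a finitely generated $R_P$-module — it is, as a localization of a finitely generated $R$-module.) Hence $\pdim_R M\geq\pdim_{R_P}M_P=c$. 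For step (ii): the exact sequence \eqref{eq:3-terms-old},
\[
\begin{tikzcd}[column sep=small]
  0\ar[r] & \der(\A)(-1)\ar[r] & F_0\ar[r] & E \ar[r] & M\ar[r] & 0,
\end{tikzcd}
\]
with $F_0,E$ free $R$-modules, exhibits $\der(\A)(-1)$ as a second syzygy of $M$; equivalently, after splicing in a free resolution of $\der(\A)$, a projective resolution of $M$ of length $\ell$ yields one of $\der(\A)$ of length $\ell-2$ provided $\ell\geq2$. Concretely, breaking the four-term sequence into two short exact sequences and using the standard shift, $\pdim_R M = \pdim_R\der(\A) + 2$ whenever $\pdim_R\der(\A)\geq 1$ — and when $\pdim_R\der(\A)=0$ one still gets $\pdim_R M\leq 2$. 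Combining with $\pdim_R M\geq c$ gives $\pdim_R\der(\A)\geq c-2=\rank_\M(F)-2$, as wanted. Taking the maximum over all such $F$ finishes the proof.

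The main subtlety — and the step I would be most careful about — is the interplay between grading and localization in step (i): $M=(S/\fra)_{(\cdot,1)}$ is an $R$-module (the functor $(-)_{(\cdot,1)}$ is exact and sends graded frees to graded frees, as used in Corollary~\ref{cor:tor_of_der}), and Corollary~\ref{cor:min_primes2} is stated for \emph{graded} associated primes, so I should invoke that over a polynomial ring the graded associated primes coincide with the associated primes, legitimizing the localization argument; alternatively one can run the whole depth computation in the graded category using graded local cohomology $H^0_{P}(M)\neq 0$, avoiding localization entirely. A second minor point is the hypothesis ``$\A$ non-empty'': this guarantees $\M$ is loopless with $n\geq 1$ so that $R$, $S$, $\fra$ and the exact sequence \eqref{eq:3-terms-old} are all set up as in the earlier sections; if $\min(\cyc(\M)\setminus\{\emptyset\})$ is empty the claimed maximum is $-\infty$ (or one adopts the convention $\max\emptyset=-\infty$) and there is nothing to prove. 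Everything else is bookkeeping with Auslander--Buchsbaum and syzygy shifts.
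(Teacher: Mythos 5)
Your proof is correct, and it reaches the same conclusion as the paper via the same two intermediate facts — namely, that $P_F$ is an associated prime of $M=(S/\fra)_{(\cdot,1)}$ of codimension $c=\rank_\M F$ (Corollary~\ref{cor:min_primes2}), and that $\der(\A)(-1)$ is a second syzygy of $M$ (the four-term sequence~\eqref{eq:4-terms-old}; note you cite~\eqref{eq:3-terms-old} but display the four-term sequence). The difference is in how you convert ``$P_F$ associated, $\codim P_F = c$'' into a homological lower bound. You localize at $P_F$ and invoke Auslander--Buchsbaum over the regular local ring $R_{P_F}$ to get $\pdim_{R_{P_F}}M_{P_F}=c$, hence $\pdim_R M\geq c$, and then shift down by two through the syzygy sequence. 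The paper instead applies the Eisenbud--Huneke--Vasconcelos theorem \cite[Th.~1.1(1)]{EHV92}, which says $P_F$ is also an associated prime of $\Ext^c_R(M,R)$, so that module is nonzero; combined with the dimension shift $\Ext^{c-2}_R(\der(\A)(-1),R)\cong\Ext^c_R(M,R)$ this gives the bound directly. Your route is more elementary (Auslander--Buchsbaum plus localization, both standard), while the paper's EHV route is slightly sharper — it shows where the obstruction lives, not just that $\pdim$ is large — and is reused essentially verbatim in the proof of Theorem~\ref{recipe}, so the authors have a reason to set it up once. Your remark about graded versus ungraded associated primes is the right thing to check but is unproblematic: over a Noetherian graded ring every associated prime of a graded module is homogeneous, so the two notions coincide and the localization step is legitimate.
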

\begin{proof}
    Assume that $\M$ has a minimal non-empty cyclic flat $F$ of rank $c\geq 3$. We are going to show that $\pdim_R\der(\A)\geq c-2$.
    
    Let $M= (S/\fra)_{(\cdot,1)}$. According to \eqref{eq:4-terms-old}, $\der(\A)(-1)$ is a second syzygy of $M$. Therefore, because $c\geq 3$,
    \[\Ext_R^{c-2}(\der(A)(-1),R) \cong \Ext_R^{c}(M,R).\]
    It therefore suffices to prove that $\Ext_R^{c}(M, R)\neq 0$. 
    
    By \Cref{cor:min_primes2}\ref{cor:min_primes2_2}, $P_F$ is an associated (in fact minimal) prime of $M$ of codimension $c$. So, \cite[Th.~1.1(1)]{EHV92} guarantees that $P_F$ is also an associated prime of $\Ext_R^{c}(M, R)$. Thus, the module $\Ext_R^{c}(M, R)\neq 0$, as claimed.
\end{proof}

This result is stated slightly differently in the original.  To translate, 
we note that $F$ is a minimal cyclic flat of $\M$ of rank $c$ if and only if
$\M|F$ is a uniform matroid.  Provided $c\geq2$, this is to say that 
$\A_F$ is a generic arrangement.

\begin{remark}
    Our proof of the the bound in \Cref{th:pdim-leqc} used only our knowledge of the minimal primes of $(S/\fra)_{(\cdot,1)}$. However, by \Cref{cor:min_primes}\ref{cor:min_primes2_1}, the same argument applied to all the associated primes shows that 
    \[ \pdim_R\der(\A) \geq \max\{\rank_\M F - 2 : P_F\text{ is an associated prime of }(S/\fra)_{(\cdot,1)}\}.\]
    This gives another motivation for \Cref{ques:emb?}.
\end{remark}

\begin{corollary}\label{th:free-leq2}
    Let $\A$ be an arrangement. If $\A$ is free, then every minimal non-empty cyclic flat has rank at most 2.
\end{corollary}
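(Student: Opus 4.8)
The plan is to read off \Cref{th:free-leq2} as a direct consequence of \Cref{th:pdim-leqc}. First I would recall that, by definition, $\A$ being free means $\der(\A)$ is a free $R$-module, and hence $\pdim_R\der(\A)=0$. Plugging this into the inequality of \Cref{th:pdim-leqc},
\[
0=\pdim_R\der(\A)\geq \max\bigl\{\rank_\M F-2 : F\in\min\bigl(\cyc(\M)\setminus\{\emptyset\}\bigr)\bigr\},
\]
so every $F\in\min(\cyc(\M)\setminus\{\emptyset\})$ satisfies $\rank_\M F-2\leq 0$, i.e.\ $\rank_\M F\leq 2$.

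The only thing to check is that this covers all minimal non-empty cyclic flats, not just those appearing in the max: if $\min(\cyc(\M)\setminus\{\emptyset\})$ is empty the statement is vacuous, and otherwise the displayed inequality applies to each such $F$ individually since the max of a nonempty set of integers being $\leq 0$ forces every element to be $\leq 0$. There is no real obstacle here — the content is entirely in \Cref{th:pdim-leqc} — so the write-up is essentially the two lines above, perhaps with a remark that the same argument, combined with \Cref{cor:min_primes2}\ref{cor:min_primes2_1}, would bound $\rank_\M F$ for every flat with $P_F$ an associated prime of $(S/\fra)_{(\cdot,1)}$, not just the minimal cyclic ones.
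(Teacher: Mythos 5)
Your argument is correct and is exactly the intended deduction: the paper states \Cref{th:free-leq2} as an immediate corollary of \Cref{th:pdim-leqc} with no further proof, and your two lines (freeness gives $\pdim_R\der(\A)=0$, then read off the bound) are precisely what the paper leaves to the reader.
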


If $\A$ is a free simple arrangement, then, its minimal, non-empty cyclic flats
have rank exactly $2$.  Since the Boolean arrangement is the only arrangement
with no cyclic flats, this implies an early result of Ziegler:

\begin{corollary}[Cor.\ 7.6, \cite{Zie89}]\label{th:rk2}
  Let $\A$ be a simple arrangement.  If $\A$ is free and
  all rank-$2$ flats have size $2$, then $\A$ is the Boolean arrangement.
\end{corollary}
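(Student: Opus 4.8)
The plan is to argue by contradiction, combining \Cref{th:free-leq2} with the structure of rank-$2$ flats of a simple matroid. Suppose $\A$ is simple and free and every rank-$2$ flat has size $2$, but $\A$ is \emph{not} the Boolean arrangement; equivalently, the underlying matroid $\M$ is not the free matroid, so it has a circuit $C$. By \Cref{propdef:cyclic_flats}, $Z\bigl(\cl_\M(C)\bigr)$ is then a cyclic flat, and it is non-empty because it contains $C$. Hence $\cyc(\M)\setminus\{\emptyset\}$ is non-empty, and since $L(\M)$ is finite it has a minimal element $F$.

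Next I would pin down the rank of $F$. Freeness of $\A$ together with \Cref{th:free-leq2} gives $\rank_\M(F)\le 2$. On the other hand $F\ne\emptyset$, and $\rank_\M(F)\ne 1$: in a loopless matroid a rank-$1$ flat is a single non-loop element, which contains no circuit and hence is not a union of circuits. So $\rank_\M(F)=2$, which means $F$ is one of the rank-$2$ flats covered by the hypothesis, whence $|F|=2$.

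To conclude, I would observe that $\M|F$ is a simple matroid of rank $2$ on the two elements of $F$, so $\M|F\cong U_{2,2}$, which has no circuit; but $F$ is a non-empty cyclic flat, hence a union of circuits of $\M$, all of which lie in $F$ and therefore in $\M|F$ --- a contradiction. Thus $\M$ has no circuit, i.e.\ $\M\cong U_{n,n}$, and a simple realization of $U_{n,n}$ is the Boolean arrangement. I expect the only delicate point to be the middle step: that a minimal non-empty cyclic flat of a free simple arrangement has rank exactly $2$ (the upper bound from \Cref{th:free-leq2}, the lower bound because loopless matroids have no cyclic flat of rank $\le 1$), and that such a flat must then have at least three elements --- which is precisely what the size-$2$ hypothesis rules out. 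Everything else is just unwinding the definition of a cyclic flat.
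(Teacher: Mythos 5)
Your proof is correct and follows essentially the same route the paper does: apply \Cref{th:free-leq2} to cap the rank of a minimal non-empty cyclic flat at $2$, note that simplicity forces the rank to be at least $2$, and then use the size-$2$ hypothesis to show no such flat can exist, leaving $\cyc(\M)=\{\emptyset\}$ and hence the Boolean arrangement.

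One small imprecision worth flagging: twice you justify the lower bound $\rank_\M(F)\geq 2$ by appealing to \emph{looplessness}, claiming a rank-$1$ flat in a loopless matroid is a single element. That is not true---in a loopless matroid a rank-$1$ flat is a parallel class, and a parallel pair is a $2$-element circuit, so rank-$1$ cyclic flats do exist (e.g.\ the rank-$1$ flat of $U_{1,2}$). What you actually need, and what the corollary provides, is that $\A$ is \emph{simple}: then rank-$1$ flats are singletons and, more to the point, all circuits have size $\geq 3$. With ``loopless'' replaced by ``simple'' the argument is watertight.
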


% \begin{proof}
    % Assume that $\M$ has a minimal non-empty cyclic flat $F$ of rank $c\geq 3$. We are going to show that $\A$ is not free.
    
    % Let $M= (S/\fra)_{(\cdot,1)}$. According to \eqref{eq:4-terms-old}, $\der(\A)(-1)$ is a second syzygy of $M$. Therefore, because $c\geq 3$,
    % \[\Ext_R^{c-2}(\der(A)(-1),R) \cong \Ext_R^{c}(M,R).\]
    % It therefore suffices to prove that $\Ext_R^{c}(M, R)\neq 0$. 
    
    % By \Cref{cor:min_primes2}\ref{cor:min_primes2_2}, $P_F$ is an associated (in fact minimal) prime of $M$ of codimension $c$. So, \cite[Th.~1.1(1)]{EHV92} guarantees that $P_F$ is also an associated prime of $\Ext_R^{c}(M, R)$. Thus, the module $\Ext_R^{c}(M, R)\neq 0$, as claimed.
    % -------- %
    % Let $M= (S/\fra)_{(\cdot,1)}$. Since $\A$ is free, the sequence \eqref{eq:4-terms} is a free $R$-resolution of $M$ of length 2, and therefore the projective dimension of $M$ is at most 2. Localization does not increase the projective dimension, which means that if $\frp$ is a minimal prime of $M$, then 
    % \[\pdim_{R_\frp}M_\frp \leq 2.\]
    % By \Cref{homol}, the codimension of $I=\operatorname{ann}_{R_\frp} M_\frp$ is at most $\pdim_{R_\frp}M_\frp$. Since $I$ is the annihilator of $M_\frp$ and $\frp$ is a minimal prime of $M$, we know that $I$ is $\frp R_\frp$-primary. Therefore, the codimension of $I$ is the codimension of $\frp$. Putting all this together, we find that for any minimal prime $\frp$ of $M$, 
    % \[\codim(\frp) \leq \pdim_{R_\frp} M_\frp \leq 2.\]
    % Now apply \Cref{cor:min_primes2}\ref{cor:min_primes2_2}.
% \end{proof}

\begin{remark}
    The converse of \Cref{th:free-leq2} is false: All the minimal non-empty cyclic flats of the arrangement in \Cref{ex:bracelet} have rank 2, yet the arrangement is not free (in fact it is not even tame).
\end{remark}

\subsection{The case that \texorpdfstring{$\fra$}{a} is linear type}\label{sec:fra-lt}
Recall that an ideal $I$ in a Noetherian ring is \emph{linear type} if the natural map from the symmetric algebra $\Sym(I)$ of $I$ to the Rees algebra $\mathscr{R}(I)$ of $I$ is an isomorphism (see, e.g., \cite[\S2]{Hun80}). Unravelling the definitions involved, one gets \Cref{fra-lt} below.

To state the \namecref{fra-lt}, we will need the following two facts (both of which can be found in \cite[\S2]{Hun80}): The kernel of the natural map $S[a]\to \scrR(\fra)$ is
\begin{align*}
    \scrJ = \{h(a_1,\ldots,a_n) : h(f_1g_1,\ldots,f_ng_n)=0\}.
\end{align*}
The kernel of the natural map $S[a]\to \Sym(\fra)$ is the $S[a]$-ideal $\scrL$ generated by the elements in $\scrJ$ of $A$-degree $1$.

\begin{proposition}\label{fra-lt}\hphantom{newline}
    \begin{enumerate}[label=\textnormal{(\alph*)}]
        \item\label{fra-lt.sliceX} $I_\frX=\scrJ_{(\cdot, 1;\cdot)}$, i.e.\ $I_\frX$ is the part of $\scrJ$ with $R^\perp$-degree 1.
        \item\label{fra-lt.sliceLog} $I_{\log}=\scrL_{(\cdot,1;\cdot)}$, i.e.\ $I_{\log}$ is the part of $\scrL$ with $R^\perp$-degree 1.
        \item\label{fra-lt.main} If $\fra$ is linear type, then $I_{\log}=I_\X$.
    \end{enumerate}
\end{proposition}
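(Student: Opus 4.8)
The plan is to prove parts \ref{fra-lt.sliceX}, \ref{fra-lt.sliceLog}, and \ref{fra-lt.main} in order, with \ref{fra-lt.main} being an immediate consequence of the first two once we slice the isomorphism $\Sym(\fra)\xrightarrow{\sim}\scrR(\fra)$ in the appropriate multidegree. So most of the work is in the two ``slicing'' statements, which are really just a matter of matching up definitions.

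For \ref{fra-lt.sliceX}: by definition $I_\X$ is the kernel of the map $R[a]\to S$ sending $a_i\mapsto f_ig_i$, whereas $\scrJ$ is the kernel of the map $S[a]\to S$ sending $a_i\mapsto f_ig_i$ (this is the presentation ideal of the Rees algebra $\scrR(\fra)\cong S[f_1g_1 t,\ldots,f_ng_n t]$, realized inside $S$ by $t\mapsto 1$ after forgetting the Rees grading — more precisely $\scrJ$ is generated by the polynomial relations among the $f_ig_i$ over $S$). First I would observe that $S[a]=R[a]\otimes_\k R^\perp$, so $S[a]$ carries the $R^\perp$-grading, and the map $S[a]\to S$ is a map of $R^\perp$-graded rings (with $a_i$ in $R^\perp$-degree $0$). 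Hence $\scrJ$ is $R^\perp$-graded, and its $R^\perp$-degree-$0$ part $\scrJ_{(\cdot,0;\cdot)}$ is exactly the kernel of the restriction $R[a]\to R$, which is zero since $R[a]\to R$ is just... no: I need to be careful. The correct statement is that $I_\X$ is the slice $\scrJ_{(\cdot,1;\cdot)}$ in $R^\perp$-degree $1$, not degree $0$. The reasoning: an element $h(a)\in\scrJ$ homogeneous of $R^\perp$-degree $1$ can be written $h=\sum_j c_j a_j$ with $c_j\in R[a]$ — wait, $c_j$ must have $R^\perp$-degree $1$ too if $a_j$ has $R^\perp$-degree $0$. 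Let me reconsider: the grading convention here is that $g_i$ has $R^\perp$-degree $1$ and $f_i$, $a_i$ have $R^\perp$-degree $0$, so $f_ig_i$ has $R^\perp$-degree $1$. The relation $h(f_1g_1,\ldots,f_ng_n)=0$ with $h$ of total degree $d$ gives something of $R^\perp$-degree $d$. So the ``$R^\perp$-degree 1'' slice of $\scrJ$ consists of relations $\sum_i c_i(f_ig_i)=0$ linear in the generators, with $c_i\in R[a]$ — and comparing with \eqref{eq:syz} in the proof of \Cref{prop:a-deg1}, these are precisely the defining relations of $I_\X$ under $a_i\leftrightarrow$ the coefficient bookkeeping. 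Concretely: $\scrJ_{(\cdot,1;\cdot)}$ is spanned over $R[a]$... Actually the cleanest argument is: $\scrR(\fra)\cong\bigoplus_{k\geq0}\fra^k$, and its $R^\perp$-degree-$1$ part is $\bigoplus_k(\fra^k)_{(\cdot,1)}=\bigoplus_k\fra_{(\cdot,1)}\cdot(\text{stuff})$; but $\fra_{(\cdot,1)}\cong(R[a]/I_\X)_{(\cdot;1)}$ by the displayed isomorphism right after the definition of $\fra$, so slicing the presentation $S[a]/\scrJ=\scrR(\fra)$ in $R^\perp$-degree $1$ recovers $R[a]/I_\X$ with kernel $I_\X$. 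I would make this precise by checking that the $R^\perp$-degree-$1$ part of $S[a]$ is $R[a]\otimes_\k(W^\perp)^*\cong R[a]^{n}$ (spanned by $g_1,\ldots,g_n$ with $R[a]$-coefficients), that the map to $\scrR(\fra)$ sends this onto $\fra_{(\cdot,1)}\oplus\fra\fra_{(\cdot,1)}\oplus\cdots$, hmm — this is getting complicated because $\fra$ itself has generators in $R^\perp$-degree $1$.

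Let me instead take the genuinely simplest route, which I believe is what the authors intend: everything is a direct unwinding. $\scrJ$ is $\{h(a_1,\ldots,a_n)\in S[a] : h(f_1g_1,\ldots,f_ng_n)=0\}$, and since the substitution $a_i\mapsto f_ig_i$ respects all three gradings ($R$-degree, $R^\perp$-degree, $A$-degree), $\scrJ$ is a triply-graded ideal of $S[a]=R[a]\otimes_\k R^\perp$. Its slice in $R^\perp$-degree $1$ lives in $(S[a])_{(\cdot,1;\cdot)}=R[a]\otimes_\k(R^\perp)_1=R[a]\otimes_\k(W^\perp)^*$. An element there is $\sum_i c_i\, g_i$ with $c_i\in R[a]$; it lies in $\scrJ$ iff $\sum_i c_i f_i g_i=0$ in $S$. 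Writing $c_i=\sum_\alpha c_{i,\alpha}a^\alpha$ and matching with the generators $a_i\mapsto f_ig_i$ of $I_\X$ — actually the map $R[a]\to S$, $a_i\mapsto f_ig_i$, sends a general element to an element of $S$, and one checks $\sum_i c_i g_i\mapsto 0$ in the Rees-presentation sense corresponds exactly to $\sum_i c_i a_i\in I_\X$... I will just assert the bijection $\scrJ_{(\cdot,1;\cdot)}\leftrightarrow I_\X$ via $g_i\leftrightarrow a_i$ (both sides being kernels of ``the same'' map after identifying $(R^\perp)_1$-coefficients with $A$-linear data) and verify it respects ideal structure. For \ref{fra-lt.sliceLog}: $\scrL$ is generated as an $S[a]$-ideal by $\scrJ_{(\cdot,\cdot;1)}$ (the $A$-degree-$1$ part of $\scrJ$); slicing in $R^\perp$-degree $1$ and using that the $A$-degree-$1$, $R^\perp$-degree-$1$ part of $\scrJ$ matches, via Proposition~\ref{prop:a-deg1} and the identification in \ref{fra-lt.sliceX}, the generators $\angl{\theta,\omega_a}$ of $I_{\log}$ — this is precisely the content of Theorem~\cite{CDFV11} and Proposition~\ref{prop:a-deg1}, which say that $(I_\X)_{(\cdot;1)}=(I_{\log})_{(\cdot;1)}\cong\der(\A)(-1)$, the $A$-degree-$1$ syzygies. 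Then $I_{\log}=\scrL_{(\cdot,1;\cdot)}$ follows since slicing is exact and $I_{\log}$ is generated (over $R[a]$) by its $A$-degree-$1$ part. Finally, \ref{fra-lt.main}: if $\fra$ is linear type then $\scrL=\scrJ$ by definition, so $I_{\log}=\scrL_{(\cdot,1;\cdot)}=\scrJ_{(\cdot,1;\cdot)}=I_\X$, combining \ref{fra-lt.sliceLog}, the hypothesis, and \ref{fra-lt.sliceX}.

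The main obstacle will be \ref{fra-lt.sliceX}: getting the bookkeeping exactly right between the three different presentations — the Rees-algebra presentation $S[a]\to\scrR(\fra)$ (which ``remembers'' $S$, so has $n$ extra polynomial variables $x_{r+1},\ldots,x_n$ worth of coefficients, i.e.\ the $R^\perp$ part), the $I_\X$ presentation $R[a]\to S$ (no extra variables, but lands in $S$), and the displayed isomorphism $\fra_{(\cdot,1)}\cong(R[a]/I_\X)_{(\cdot;1)}$. I expect the resolution is that the $R^\perp$-degree-$1$ slice of the Rees algebra $\bigoplus_k\fra^k$ is, as a module, $\bigoplus_k(\fra^k)_{(\cdot,1)}$, and since $\fra$ is generated in $R^\perp$-degree $1$ by $f_1g_1,\ldots,f_ng_n$ with $R$-degree-$1$ ``co-factors'', the natural surjection $R[a]\to\bigoplus_k(\fra^k)_{(\cdot,1)}$ (sending $a_i\mapsto f_ig_i$, degree $k$ monomials in $a$ to degree $k$ in $\fra$ but only keeping the $R^\perp$-degree-$1$ part, which — because each $f_ig_i$ already has $R^\perp$-degree $1$ — means $(\fra^k)_{(\cdot,1)}$ is actually the image of the $R$-linear, $g$-linear combinations, hmm) has kernel exactly $I_\X$. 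I would spend the bulk of the writeup nailing this down carefully, possibly by just directly showing both $\scrJ_{(\cdot,1;\cdot)}\subseteq$ (image of $I_\X$) and $\supseteq$ by hand using homogeneous elements. Once the gradings are pinned, parts \ref{fra-lt.sliceLog} and \ref{fra-lt.main} are short.
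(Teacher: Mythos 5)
Your overall plan matches the paper's proof: part~\ref{fra-lt.sliceX} is a definitional unwinding, part~\ref{fra-lt.sliceLog} is Proposition~\ref{prop:a-deg1} plus the fact that $I_{\log}$ is generated in $A$-degree $1$, and part~\ref{fra-lt.main} follows by substituting $\scrJ=\scrL$. Part~\ref{fra-lt.main} you state cleanly and correctly, and your outline of~\ref{fra-lt.sliceLog} is the same chain of equalities the authors use.

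The difficulty is~\ref{fra-lt.sliceX}. The paper dispatches it in one sentence (``this follows immediately from the definition of $I_\X$''), and rightly so, because both $I_\X$ and $\scrJ$ are kernels of the \emph{same} substitution $a_i\mapsto f_ig_i$, the first on $R[a]$ and the second on the larger ring $S[a]=R[a]\otimes_{\k}R^\perp$; the relevant slice of $\scrJ$ simply recovers $\scrJ\cap R[a]=I_\X$. You eventually gesture at this (``both sides being kernels of `the same' map''), but you never actually land there: the bulk of your writeup is a sequence of false starts in which you flip back and forth about whether the slice should be taken in $R^\perp$-degree $0$ or $1$, try to route through the direct-sum decomposition $\bigoplus_k\fra^k$ of the Rees algebra, notice it is ``getting complicated,'' and finally say you will ``just assert the bijection $\scrJ_{(\cdot,1;\cdot)}\leftrightarrow I_\X$ via $g_i\leftrightarrow a_i$.'' That last assertion is not a proof, and in fact the proposed identification $g_i\leftrightarrow a_i$ does not make sense on the nose: the $R^\perp$-degree-$1$ slice of $S[a]$ is $R[a]\otimes_\k(W^\perp)^*$, which is a free $R[a]$-module of rank $n-r$ with no canonical basis indexed by $[n]$, so there is no map ``$g_i\mapsto a_i$'' to speak of. What is actually true, and what makes the claim immediate, is that the evaluation $a_i\mapsto f_ig_i$ respects the natural grading on $S[a]$ coming from the Rees presentation, and restricting it to the slice that sits inside $R[a]$ recovers precisely the definition of $I_\X$. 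You should replace the exploratory paragraphs with that one observation.

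Two smaller remarks. First, in~\ref{fra-lt.sliceLog} the phrase ``since slicing is exact'' does not by itself close the argument; what you need (and what the paper uses implicitly) is that $\scrL$ is \emph{generated} by $\scrJ_{(\cdot,\cdot;1)}$, so that after slicing, $\scrL_{(\cdot,1;\cdot)}$ is generated over $R[a]$ by $\scrL_{(\cdot,1;1)}=(I_{\log})_{(\cdot;1)}$; then generation of $I_{\log}$ in $A$-degree $1$ finishes it. Say that explicitly. Second, when you write up~\ref{fra-lt.sliceX} properly, fix a single grading convention at the outset and stick with it; half of your confusion stems from silently switching between $a_i$ having $R^\perp$-degree $0$ and $a_i$ having $R^\perp$-degree $1$.
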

\begin{proof}
    \ref{fra-lt.sliceX} This follows immediately from the definition of $I_\X$.
    
    \ref{fra-lt.sliceLog} By \Cref{prop:a-deg1}, the $A$-degree 1 part of $I_{\log}$ agrees with that of $I_\X$. Therefore, 
    \[ \scrL_{(\cdot,1;1)} = \scrJ_{(\cdot,1;1)} = (I_\X)_{(\cdot;1)} = (I_{\log})_{(\cdot;1)}.\]
    Now use that, by definition, $I_{\log}$ is generated in $A$-degree $1$.
    
    \ref{fra-lt.main} Since $\fra$ is linear type, $\scrJ=\scrL$. Now use parts \ref{fra-lt.sliceX} and \ref{fra-lt.sliceLog}.
\end{proof}

For tame arrangements, $I_{\log}=I_\X$ (\cite[Cor.~3.8]{CDFV11}). By \Cref{fra-lt}\ref{fra-lt.main}, this is also true if $\fra$ is linear type.  So we
have implications
\[
\begin{tikzcd}[column sep=small,row sep=small]
  \text{tame}\ar[rd,Rightarrow] & \\
  & I_{\log}=I_\X\\
  \text{$\fra$ is linear type}\ar[ru,Rightarrow] &
\end{tikzcd}
\]

It is therefore natural to ask the following questions:

\begin{question}\hphantom{newline}
    \begin{enumerate}[label=(\textnormal{\alph*})]
        \item Is the converse of \Cref{fra-lt}\ref{fra-lt.main} true?
        \item Is the converse of \cite[Cor.~3.8]{CDFV11} true?
        \item Does tameness imply that $\fra$ is linear type? What about the converse?
    \end{enumerate}
\end{question}

\bibliographystyle{amsalpha}
\bibliography{arrangements,pairs}

\providecommand{\bysame}{\leavevmode\hbox to3em{\hrulefill}\thinspace}
\providecommand{\MR}{\relax\ifhmode\unskip\space\fi MR }
% \MRhref is called by the amsart/book/proc definition of \MR.
\providecommand{\MRhref}[2]{%
  \href{http://www.ams.org/mathscinet-getitem?mr=#1}{#2}
}
\providecommand{\href}[2]{#2}
\begin{thebibliography}{CDFV11}

\bibitem[ADH20]{ADH20}
Federico Ardila, Graham Denham, and June Huh, \emph{Lagrangian geometry of
  matroids}, 2020.

\bibitem[CDFV11]{CDFV11}
D.~Cohen, G.~Denham, M.~Falk, and A.~Varchenko, \emph{Critical points and
  resonance of hyperplane arrangements}, Canad. J. Math. \textbf{63} (2011),
  no.~5, 1038--1057. \MR{2866070}

\bibitem[CHKS06]{CHKS06}
Fabrizio Catanese, Serkan Ho{\c{s}}ten, Amit Khetan, and Bernd Sturmfels,
  \emph{The maximum likelihood degree}, Amer. J. Math. \textbf{128} (2006),
  no.~3, 671--697. \MR{MR2230921 (2007m:13036)}

\bibitem[Den14]{D14}
Graham Denham, \emph{Toric and tropical compactifications of hyperplane
  complements}, Ann. Fac. Sci. Toulouse Math. (6) \textbf{23} (2014), no.~2,
  297--333. \MR{3205595}

\bibitem[DGS12]{DGS11}
Graham Denham, Mehdi Garrousian, and Mathias Schulze, \emph{A geometric
  deletion-restriction formula}, Adv. Math. \textbf{230} (2012), no.~4-6,
  1979--1994. \MR{2927361}

\bibitem[DS04]{DS04}
Harm Derksen and Jessica Sidman, \emph{Castelnuovo-{M}umford regularity by
  approximation}, Adv. Math. \textbf{188} (2004), no.~1, 104--123. \MR{2084776}

\bibitem[DS12]{DS12}
Graham Denham and Mathias Schulze, \emph{Complexes, duality and {C}hern classes
  of logarithmic forms along hyperplane arrangements}, Arrangements of
  hyperplanes---{S}apporo 2009, Adv. Stud. Pure Math., vol.~62, Math. Soc.
  Japan, Tokyo, 2012, pp.~27--57. \MR{2933791}

\bibitem[EHV92]{EHV92}
David Eisenbud, Craig Huneke, and Wolmer Vasconcelos, \emph{Direct methods for
  primary decomposition}, Invent. Math. \textbf{110} (1992), no.~2, 207--235.
  \MR{1185582}

\bibitem[GS]{M2}
D.~Grayson and M.~Stillman, \emph{Macaulay2---a software system for algebraic
  geometry and commutative algebra}, available at {\tt
  http://www.math.uiuc.edu/Macaulay2}.

\bibitem[HS14]{HS14}
June Huh and Bernd Sturmfels, \emph{Likelihood geometry}, Combinatorial
  algebraic geometry, Lecture Notes in Math., vol. 2108, Springer, Cham, 2014,
  pp.~63--117. \MR{3329087}

\bibitem[Huh13]{Huh13}
June Huh, \emph{The maximum likelihood degree of a very affine variety},
  Compos. Math. \textbf{149} (2013), no.~8, 1245--1266. \MR{3103064}

\bibitem[Hun80]{Hun80}
Craig Huneke, \emph{On the symmetric and {R}ees algebra of an ideal generated
  by a {$d$}-sequence}, J. Algebra \textbf{62} (1980), no.~2, 268--275.
  \MR{563225}

\bibitem[KS06]{KS06}
Joseph P.~S. Kung and Hal Schenck, \emph{Derivation modules of orthogonal duals
  of hyperplane arrangements}, J. Algebraic Combin. \textbf{24} (2006), no.~3,
  253--262. \MR{2260017}

\bibitem[OT92]{OTbook}
Peter Orlik and Hiroaki Terao, \emph{Arrangements of hyperplanes}, Grundlehren
  der Mathematischen Wissenschaften, vol. 300, Springer-Verlag, Berlin, 1992.

\bibitem[OT95a]{OT95b}
\bysame, \emph{Arrangements and {M}ilnor fibers}, Math. Ann. \textbf{301}
  (1995), no.~2, 211--235. \MR{MR1314585 (96f:52014)}

\bibitem[OT95b]{OT95}
\bysame, \emph{The number of critical points of a product of powers of linear
  functions}, Invent. Math. \textbf{120} (1995), no.~1, 1--14. \MR{MR1323980
  (96a:32062)}

\bibitem[Oxl11]{Oxbook}
James Oxley, \emph{Matroid theory}, second ed., Oxford Graduate Texts in
  Mathematics, vol.~21, Oxford University Press, Oxford, 2011. \MR{2849819}

\bibitem[Var95]{Var95}
A.~Varchenko, \emph{Critical points of the product of powers of linear
  functions and families of bases of singular vectors}, Compositio Math.
  \textbf{97} (1995), no.~3, 385--401. \MR{1353281 (96j:32053)}

\bibitem[Wal17]{Wal17}
Uli Walther, \emph{The {J}acobian module, the {M}ilnor fiber, and the
  {$D$}-module generated by {$f^s$}}, Invent. Math. \textbf{207} (2017), no.~3,
  1239--1287. \MR{3608290}

\bibitem[Zie89]{Zie89}
G\"{u}nter~M. Ziegler, \emph{Combinatorial construction of logarithmic
  differential forms}, Adv. Math. \textbf{76} (1989), no.~1, 116--154.
  \MR{1004488}

\end{thebibliography}
\end{document}